\documentclass{amsart}
\usepackage[margin=3cm]{geometry}
\usepackage{mathtools}
\usepackage{amsfonts}
\usepackage{amsthm}
\usepackage{amssymb}
\usepackage{enumitem}
\usepackage{amsthm, amsmath, hyperref, dsfont, enumitem, tcolorbox, esint, mathtools,contour}
\usepackage{tikz}
\usepackage{tikz-cd}
\usepackage{enumitem}
\mathtoolsset{showonlyrefs=true}

\newcommand{\Pt}{P^{\ell}_{\tau}}

\theoremstyle{plain}
\newtheorem{theorem}{Theorem}[section]
\newtheorem{proposition}[theorem]{Proposition}
\newtheorem{lemma}[theorem]{Lemma}
\newtheorem{corollary}[theorem]{Corollary}

\theoremstyle{definition}
\newtheorem{definition}[theorem]{Definition}

\theoremstyle{remark}
\newtheorem{remark}[theorem]{Remark}

\numberwithin{equation}{section}

\DeclareMathOperator{\Hol}{Hol}
\DeclareMathOperator{\Graph}{Graph}
\DeclareMathOperator{\Hom}{Hom}
\DeclareMathOperator{\Sec}{Sec}

\tikzset{
    rp1_circle/.style={thick, black},
    root_f/.style={circle, fill=blue, inner sep=1.8pt, draw=blue!50!black},
    root_g/.style={rectangle, fill=green!70!black, inner sep=2.2pt, draw=green!50!black},
    move_arrow/.style={->, thick, dashed, blue!70!black}
}

\newcommand{\tb}[1]{#1}

\usepackage{todonotes}

\newcommand{\Gr}{\mathrm{Gr}}

\newcommand{\bcol}{\begin{pmatrix}}
\newcommand{\ecol}{\end{pmatrix}}

\title[h-principle for distributions of corank two of odd rank]{h-principle for corank two distributions of odd rank and maximal first Kronecker index}
\thanks{ I.\ Zelenko was partly supported by NSF grant DMS 2105528 and Simons Foundation Collaboration Grant for Mathematicians 524213.}
\subjclass[2020]{58A30, 57R17, 58A17, 58A20, 53A55, 57R57}
\keywords{Gromov's $h$-principle, convex integration, vector distributions, Kronecker--Weierstrass normal forms of pencils, resultants of pairs of real binary forms}
\author{Milan Jovanovic}
\address{Milan Jovanovic\\ Department of Mathematics\\
         Texas A\&M University\\
         College Station\\
         Texas \ 77843\\
}
\email{milankj@tamu.edu}

\author{Igor Zelenko} 
\address{Igor Zelenko\\
         Department of Mathematics\\
         Texas A\&M University\\
         College Station\\
         Texas \ 77843\\
         USA}
\email{zelenkotamu@tamu.edu}
\urladdr{\url{https://people.tamu.edu/~zelenkotamu/}}

\begin{document}

\begin{abstract}
We study corank-two distributions $D$ of odd rank $2N+1$ whose associated pencil of
skew-symmetric forms $\{d\alpha|_{D}(q)\mid \alpha\in\Omega^1(M),\ \alpha|_{D}=0\}$
lies, at every point, in the generic orbit of the natural
$\mathrm{GL}\bigl(D(q)\bigr)$-action; equivalently, $D$ has maximal first Kronecker
index. In corank two this class is the analogue of contact and even-contact
distributions. We prove that the corresponding differential relation is ample and
hence satisfies the multi-parametric $C^0$-close $h$-principle, and we deduce a
complete topological characterization: for $N\ge1$, a $(2N+3)$-dimensional manifold
carries such a distribution if and only if
\[
TM\cong\bigl(A\otimes\mathrm{Sym}^{N}(Q^{*})\bigr)\oplus\bigl(A^{*}\otimes\mathrm{Sym}^{N-1}(Q)\otimes\Lambda^{2}Q\bigr)\oplus Q
\]
for some rank two bundle $Q$ and some real line bundle $A$; for a given distribution one can take $Q=TM/D$ and $A=\det D$. The criterion is satisfied by non-parallelizable manifolds as
well. To the best of our knowledge, ampleness has been verified so far for two classes of distributions defined by an
open $\mathrm{Diff}$-invariant relation: even-contact distributions (McDuff, 1987) and distributions of rank greater than
two with maximal small growth vector, in arbitrary ambient dimension
(Mart\'inez-Aguinaga, 2026); for $N\ge2$ our class is a proper open subclass of the latter, singled out by a further condition on the pencil.
 The verification of ampleness reduces, via Kronecker--Weierstrass normal forms
of the restrictions of the pencil to hyperplanes, to computing the convex hulls of the
connected components of the complement of the resultant hypersurface in the space of
pairs of real binary forms of fixed degrees, components separated by a winding number,
i.e. by the classical Cauchy index. This is in contrast with the even-contact case of
McDuff, where the set to be removed is thin, i.e. of codimension at least two, so that
its complement is automatically connected and ampleness is immediate. For completeness, we also show that for corank two distributions on even-dimensional manifolds the regularity of the pencil defines an open ample relation, while the finer open relations prescribing the number of real roots of the Pfaffian of the pencil (for $(4,6)$-distributions, the hyperbolic and elliptic ones) are never ample.

	\end{abstract}
\maketitle
\section{Introduction}

A rank $\ell$ distribution $D$ on a manifold $M$ is a rank $\ell$ subbundle of its tangent bundle.
The corank of such a distribution is by definition $\dim M-\ell$.
A general question of differential topology is whether a distribution satisfying a Diff-invariant
differential relation $\mathcal{R}$ exists on a given manifold.
Examples extensively treated in the literature include contact distributions
\cite{bennequin1983, eliashberg1989, bem2015}, even-contact distributions
\cite{mcduff1987}, and Engel distributions \cite{casals2017, casals2020,
	delpinovogel2020}.
Gromov \cite{Gromov86} proposed to study this kind of question by
trying to prove or disprove properties that are now called the \emph{Gromov
	homotopy principle} or, shortly, the \emph{$h$-principle}.
He developed a variety of methods for proving this principle, such as holonomic
approximation and convex integration (see also \cite{EliashbergMishachev2002}).

Let us briefly recall the $h$-principle.
Consider a fiber bundle $\pi \colon X \to M$ over $M$.
The $r$-\emph{jet bundle} $X^{(r)}$ is the bundle over $M$ with the
fiber $X^{(r)}_p$ over a point $p\in M$ consisting of all $r$-jets of local
sections of the bundle $X$ at $p$.
For each integer $r\geq 0$, there is a natural injective map $J^r$, called the
$r$-\emph{jet extension}, mapping a section $f$ of $X$ to a section $J^rf$
of $X^{(r)}$ so that $J^rf(p)$ is the $r$-jet of $f$ at $p$.
A section $M \to X^{(r)}$ is said to be \textit{holonomic} if it is in the image of $J^r$.
The set of all holonomic sections will be denoted $\Hol(X^{(r)})$.

A \emph{differential relation $\mathcal{R}$ of order r} is a subset of $X^{(r)}$.
A \textit{formal solution} of $\mathcal{R}$ is a section $F \colon M \to X^{(r)}$
for which $\Graph F \subseteq \mathcal{R}$.
The set of formal solutions is denoted $\Sec \mathcal{R}$.
A formal solution $F$ is said to be a \textit{genuine solution} if it is holonomic,
i.e. $F = J^r(f)$ for some $f \colon M \to X$.
The set of genuine solutions is denoted $\Hol \mathcal{R}$.


A differential relation $\mathcal{R}$ is said to satisfy a \emph{$0$-parametric
	$h$-principle} if any formal solution can be continuously deformed (homotoped)
in $\mathrm{Sec}(\mathcal{R})$ to a genuine solution.
This is equivalent to the surjectivity of the map
\begin{equation}
	\label{induced}
	i_*\colon \pi_0(\mathrm{Hol}(\mathcal{R})) \rightarrow \pi_0(\mathrm{Sec}(\mathcal{R}))
\end{equation}
where $i\colon \mathrm{Hol}(\mathcal{R}) \hookrightarrow
	\mathrm{Sec}(\mathcal{R})$ represents inclusion.
A differential relation $\mathcal{R}$ is said to satisfy a \emph{$1$-parametric
	$h$-principle} if any $1$-parametric family of formal solutions $\{f_t\}_{t
	\in [0,1]}$ joining two genuine solutions can be deformed inside
$\mathrm{Sec}(\mathcal{R})$, keeping $f_0, f_1$ fixed, into a family
$\{f_t'\}_{t \in [0,1]}$ of genuine solutions of $\mathcal{R}$.
In particular, it implies that the map  $i_*\colon
	\pi_0(\mathrm{Hol}(\mathcal{R})) \rightarrow \pi_0(\mathrm{Sec}(\mathcal{R}))$
is one-to-one, so, together with the $0$-parametric h-principle, this map is
bijective (the $1$-parametric $h$-principle is in fact stronger than injectivity of $i_*$, since the deformation is required to be relative to the endpoints).
More generally, a differential relation $\mathcal{R}$ is said to satisfy the \emph{multi-parametric $h$-principle}
if every family of formal solutions parametrized by a disk $\mathbb D^k$, $k\ge0$, which consists of genuine solutions over $\partial\mathbb D^k$, can be deformed, relative to $\partial\mathbb D^k$, inside $\mathrm{Sec}(\mathcal R)$ to a family of genuine solutions. This implies that the inclusion $i$ induces isomorphisms of all homotopy groups, i.e. $\mathrm{Hol}(\mathcal{R})$
and $\mathrm{Sec}(\mathcal{R})$ are weakly homotopy equivalent (see \cite[Section 6.2]{EliashbergMishachev2002}).

All versions of the $h$-principle hold for open diffeomorphism-invariant
differential relations on open manifolds, thanks to the Holonomic Approximation
Theorem \cite{Gromov86, EliashbergMishachev2002}.
For closed manifolds,
establishing even the $0$-parametric $h$-principle is usually a very
sophisticated task requiring a case-by-case treatment for ``filling balls''.
Furthermore, even if the $0$-parametric $h$-principle holds, the induced map
$i_*$ is typically surjective but not injective.
Generally, one must restrict to a special subclass of genuine solutions, such
as overtwisted contact structures \cite{eliashberg1989, bem2015}, or loose
\cite{casals2020} and overtwisted \cite{delpinovogel2020} Engel structures, so
that the restriction of $i_*$ to this subclass becomes injective.
In existing examples, all
versions of the $h$-principle hold when restricting to these flexible
subclasses.

The applicability of the Gromov convex integration method \cite{Gromov1973} is
more restrictive than that of the Holonomic Approximation Theorem.
However, if its conditions hold, then convex integration automatically yields
the multi-parametric
$h$-principle, even for closed manifolds.
Moreover, in contrast to the Holonomic Approximation Theorem (which lacks
$C^0$-proximity), convex integration establishes the $C^0$-close
multi-parametric $h$-principle.
Here ``$C^0$-close'' refers to the underlying sections of the bundle $X$ (in our case, to the distributions themselves, and not to their $1$-jets): the homotopy from a formal solution $F$ to a genuine solution $J^r f$ can be chosen so that the underlying sections of $X$ of all the formal solutions in the homotopy, and in particular $f$ itself, lie in an arbitrarily small $C^0$-neighborhood of the underlying section of $F$. In particular, the underlying sections of genuine solutions are $C^0$-dense among the underlying sections of formal solutions. (No such statement holds for the derivatives: the derivatives of the genuine solutions produced by convex integration oscillate rapidly and are close to those of the formal solution only in an averaged sense.)

Returning to distributions, given a vector bundle $V$ over $M$
with a fiber $V_q$ over $q \in M$, denote by $\mathrm{ Gr}_k(V)$ the Grassmannian
bundle of $V$.
With this notation, distributions are smooth sections of the bundle
\begin{equation}
	\label{X_dist}
	X=  \mathrm{Gr}_\ell(TM)\cong \mathrm{Gr}_{\dim M-\ell} (T^*M).
\end{equation}

To the best of our knowledge, in the case of distributions satisfying certain
open differential relations, ampleness (and hence the applicability of convex integration in its classical form) has so far been verified in two cases:

\begin{enumerate}
	\item Multi-parametric $C^0$-close $h$-principle for even-contact
	      distributions was proved by D. McDuff \cite{mcduff1987}.
	      Recall that a corank one distribution $D$ of odd rank is called
	      \textit{even-contact} if for any defining $1$-form $\alpha$ of $D$ (i.e., such
	      that $D=\mathrm{ker}\, \alpha$), the  $2$-form $d\alpha|_D$ has a
	      $1$-dimensional kernel at every point of $M$, or, equivalently, if
	      $\mathrm{rank}\, D=2N+1$, then $\alpha \wedge d\alpha ^N\neq 0$.
	      Clearly, this is an open diffeomorphism-invariant differential
	      relation on the first jet bundle of the Grassman bundle $\mathrm
		      {Gr}_\ell(TM)$, where $\ell=\dim M-1$;

	\item Multi-parametric $C^0$-close  $h$-principle  was proved for all
	      distributions of rank higher than two with the maximal possible  small
	      growth vector at every point in
	      \cite{martinez2026}.
	      Recall that given a distribution $D$, the
	      \emph{weak $j$th power $D^j$} of the distribution $D$  is recursively
	      defined by $$D^j:=D^{j-1}+[D, D^{j-1}], \quad D^1:=D$$ and the
	      \emph{small growth vector} at a point $q\in M$  is the tuple $(\dim\,
		      D(q), \dim~D^2(q), \dim D^3(q), \cdots)$.
\end{enumerate}
Note that in the case of corank one distributions of odd rank (resp., even rank), the condition of  being even-contact (resp., contact) is much more subtle than the condition of being just of maximal small growth vector.
We also mention the recent work \cite{MAdP2021}, where a refinement of convex integration (``convex integration with avoidance'') is developed and applied to hyperbolic $(4,6)$-distributions, i.e. to a differential relation which is \emph{not} ample (see also Subsection \ref{even_dim_sec} below); the results of the present paper do not use this refinement.

In the present paper, we study $h$-principle for a class of distributions of
corank two and odd rank, which is a natural analog of contact and even-contact
distributions in this rank and corank.
To describe this class and justify this analogy, let us give a more general
construction of a special
subspace of skew
symmetric forms associated at a given point to a distribution $D$.
For any non-trivial defining $1$-form $\alpha$ of $D$, i.e. such that
$\alpha|_D=0$, consider the $2$-form $d\alpha|_D$ on $D$.
Observe that for a given $q\in M$, the form $d\alpha|_D(q)$
depends only on the values of the $1$-form  $\alpha$ at $q$, since
\begin{equation}
	\label{Cartan}
	d\alpha|_D(q)(X, Y)=-\alpha(q)([\underline{X}, \underline {Y}](q)), \quad X,Y\in D(q),
\end{equation}
where $\underline {X}, \underline {Y}$ are local sections of $D$ and equal $X$ and $Y$ at $q$.
Let $\mathcal{D}^\perp \subset T^*M$ be the  \emph{annihilator bundle of $D$},
$$D^\perp(q)=\{\alpha \in T^*M: \alpha|_D(q)=0\}.$$
By \eqref{Cartan}, the map
\begin{equation}
	\label{Pi_2}
	L_q:D^\perp(q)\to \wedge^2 D(q)^*,\quad
	\alpha\mapsto d\alpha|_D(q)
\end{equation}
is  a well-defined linear map. The map $L_q$ is dual to the Lie
bracket map (or \emph{Levi map}) \( [\,\cdot\,, \,\cdot\,]_q \colon D(q) \wedge
D(q) \longrightarrow T_qM / D(q) \) (see \cite {KrynskiZelenko2011}). This motivates the following

\begin{definition}
	\label{pencil_def}
	The map $L_q$, as in  \eqref{Pi_2}, is called the \emph{dual Levi map} associated with a distribution $D$ at a point $q$.
	The subspace $\mathrm {Im}\, L_q$ in the space $\wedge^2 D(q)^*$ of skew-symmetric forms on $D(q)$
	is called the \emph{pencil of skew-symmetric forms} associated with the 
	distribution $D$ at a point $q\in M$ \footnote{In linear algebra, by
		pencils one usually means planes of objects (matrices, forms, linear
		operators, etc.) Here we use  the word ``pencil" in a more general
		sense as $\dim \mathrm{Im} L_q$ is in general not equal to two.}
\end{definition}
\begin{remark}
	\label{rem:dual-levi-well-defined}
	Suppose $D$ and $\widehat D$ are local distributions defined in a neighborhood of $q$,
	and $D(q) = \widehat D(q)$.
	One can check that if the 1-jets $J^1(D)(q)$ and $J^1(\widehat D)(q)$ coincide at $q$,
	then the dual Levi maps $L_D$ and $L_{\widehat D}$ coincide at $q$.
\end{remark}
The map $L_q$ is injective if and only if
\begin{equation}
	\label{step2}
	D^2(q)=T_qM,
\end{equation}
i.e. the distribution $D$ is step $2$ at $q$. In this case $\dim \mathrm{Im} L_q=\mathrm{corank} D$.

Given a point $q \in M$, the standard action of the general linear group
$\mathrm{GL}(D(q))$ on $D(q)$ induces a natural action on the space  $\wedge^2 D(q)^*$
of skew-symmetric forms on $D(q)$, and therefore also on $\Gr_2(\wedge^2 D(q)^*)$.
In the case of distributions of corank one and odd rank (resp., of even rank), the condition of being even-contact (resp., contact) at $q$ is equivalent to the fact that $\mathrm{Im}\,L_q$ is a line and this line 
belongs to the generic (non-empty Zariski open) 
orbit in the projective space of $\wedge^2 D(q)^*$ under this natural action of $\mathrm{GL}(D(q))$.

In the case of corank two distributions of step $2$, i.e. when \eqref{step2}
holds, $\mathrm{Im}\,L_q$ is a plane (or a pencil) in $\wedge^2 D(q)^*$. The
classification of orbits of planes in the space $\wedge^2 V^*$ of
skew-symmetric forms of a vector space $V$ under the natural
$\mathrm{GL}(V)$--action was given by M. Gauger \cite{gauger1973}, based on the
classical Kronecker--Weierstrass theory of pencils of matrices \cite[Chapter
	XII]{Gantmacher}, by prescribing a canonical representative (the canonical
normal form) for each orbit\footnote{To be precise, the classical settings
	of \cite{Gantmacher} and \cite{gauger1973} give a canonical decomposition for a fixed choice
	of a pair of forms, whereas we are interested in their linear span; i.e., a change of
	basis within the plane of chosen forms is allowed. The classification with
	respect to this slightly larger group of transformations can be easily obtained
	from the classical one.}.
When the dimension of $V$ is odd, there exists a generic (non-empty Zariski open)
orbit in the Grassmannian $\mathrm{Gr}_2(\wedge^2 V^*)$ of planes in $\wedge^2 V^*$ under the natural $\mathrm{GL}(V)$--action \footnote{In contrast, if the dimension of $V$ is even and at least $8$, there are no open orbits in $\mathrm{Gr}_2(\wedge^2 V^*)$ due to the existence of continuous invariants.}.

This generic orbit can be described in several equivalent ways.
First, since $\dim V$ is odd,
every skew-symmetric form on $V$ in this pencil has a nontrivial kernel. Furthermore, for any plane $\mathcal P$ in $\wedge^2 V^*$
there exists a homogeneous (vector-valued) polynomial map $B \colon \mathcal P \to V$ such that for any nonzero  $\omega$ in $\mathcal P$, $B(\omega) \in \ker \omega$  and $B(\omega) \neq 0$. The \emph{first minimal index} (or \emph{first Kronecker index}) of the pencil $\mathcal P$
is by definition the minimal possible degree of such a polynomial map. The following proposition is a direct consequence of the classification of pencils of skew symmetric forms, given in \cite[Theorem 6.8, p. 309]{gauger1973}. For some items we need to consider a complexification $\mathcal P^\mathbb C$ of the plane $P$. While $P^\mathbb C$ is by definition a span over $\mathbb C$ of $\mathcal P$, each element of it can be seen as an element of $\wedge^2V^\mathbb C$, where $V^\mathbb C$ is the complexification of $V$.

\begin{proposition}
	\label{open_orbit_char_prop}
	Let $V$ be an odd-dimensional vector space, $\dim V=2N+1$. A plane $\mathcal P$ in $\wedge^2 V^*$ belongs to the generic orbit in the Grassmannian  $\mathrm{Gr}_2(\wedge^2 V^*)$  under the natural $\mathrm{GL}(V)$--action if one of the following three equivalent conditions holds:

	\begin{enumerate}
		\item There exists a basis $(\varepsilon_1, \cdots,\varepsilon_{2N+1})$ of the dual space $V^*$ such that
		      \begin{equation}
			      \label{singular block}
			      \begin{split}
				      \mathcal P=\mathrm{span}\left\{\varepsilon_{1}\wedge \varepsilon_{2N+1} + \dots + \varepsilon_{N} \wedge \varepsilon_{N+2}, \,   \varepsilon_{2} \wedge \varepsilon_{2N+1} + \dots + \varepsilon_{N+1} \wedge \varepsilon_{N+2}\right\}= \\
				      \{(\mu \varepsilon_1+\lambda \varepsilon_2)\wedge \varepsilon_{2N+1}+(\mu \varepsilon_2+\lambda \varepsilon_3)\wedge \varepsilon_{2N}+\cdots+
				      (\mu \varepsilon_N+\lambda \varepsilon_{N+1})\wedge \varepsilon_{N+2}\mid \lambda, \mu\in \mathbb R\}.
			      \end{split}
		      \end{equation}
		      In other words, the Kronecker--Weierstrass canonical form for the pencil $\mathcal P$ consists of exactly one singular block.
		\item
		      Any nonzero form  $\omega  \in \mathcal P^\mathbb C$ has  $1$-dimensional kernel,
		      \begin{equation}
			      \label{1dim_ker}
			      \dim\, \mathrm {ker}\,\omega=1, \quad \forall \omega\in \mathcal P^\mathbb C\setminus \{0\}.
		      \end{equation}
		      or, equivalently,
		      \begin{equation}
			      \label{wedge_power}
			      \omega^N\neq 0\quad \forall \,\omega\in \mathcal P^\mathbb C \setminus \{0\}.
		      \end{equation}
		\item The first minimal index of the pencil $\mathcal P$ is equal to $N$ (which is the maximal possible value for the first minimal index in this dimension, so the first minimal index is also the only minimal index).
		\item For some (and therefore any) basis $\omega_1$ and $\omega_2$ of $\mathcal P^\mathbb C$ the forms $\{\omega_1^i\wedge
			      \omega_2^{N-i}\}_{i=0}^N$ are linearly independent.


	\end{enumerate}
\end{proposition}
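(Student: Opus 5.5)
The plan is to derive everything from the Kronecker--Weierstrass (Gauger) normal form of a pencil of skew-symmetric forms, as in \cite[Theorem 6.8]{gauger1973}. By that result, $V$ splits as an $\mathcal P$-orthogonal direct sum of three kinds of blocks. A \emph{singular block} of minimal index $m$ has dimension $2m+1$ and has exactly the shape \eqref{singular block} with $N$ replaced by $m$; for $m=0$ it is a $1$-dimensional block on which the pencil is zero. The other blocks are even-dimensional \emph{regular blocks}, either Jordan-type or infinite-type. Passing from a fixed pair of forms to the plane they span only reparametrizes $(\mu,\lambda)$ by $\mathrm{GL}_2$. So the block structure, and the multiset of minimal indices, are invariants of the plane. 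I would also note at the start that $\omega$ has rank $2N$ iff $\omega^N\neq0$, which gives the equivalence of \eqref{1dim_ker} and \eqref{wedge_power}.

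\emph{(1)$\Rightarrow$(2).} In the normal form \eqref{singular block}, write $\omega=\omega(\mu,\lambda)$. Then $\omega$ pairs $W_1=\mathrm{span}(e_1,\dots,e_{N+1})$ with $W_2=\mathrm{span}(e_{N+2},\dots,e_{2N+1})$, and vanishes on $W_1\times W_1$ and on $W_2\times W_2$. The pairing matrix is the $N\times(N+1)$ bidiagonal Toeplitz matrix with entries $\mu$ and $\lambda$. For $(\mu,\lambda)\in\mathbb C^2\setminus\{0\}$ this matrix has full rank $N$: if $\mu\ne0$, delete the last column; if $\mu=0$, delete the first column. Hence $\omega$ has rank $2N$, and $\ker\omega\subset W_1$ is spanned by the vector of signed maximal minors, whose coordinates are $\pm\mu^{N-i}\lambda^{i}$.

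\emph{(2)$\Rightarrow$(1).} A regular block has a complex point $(\mu:\lambda)$, given by an eigenvalue or by $\infty$, at which its restriction of $\omega$ degenerates. That would create extra kernel beyond the generic one. Moreover, each singular block contributes exactly one dimension to the kernel of a generic $\omega$. Therefore \eqref{1dim_ker} forces the absence of regular blocks and exactly one singular block, which must then have index $N$; this is (1).

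\emph{(1)$\Leftrightarrow$(3).} The direction (3)$\Rightarrow$(1) is a dimension count. A singular block of index $m$ has dimension $2m+1\le 2N+1$. The first minimal index is the smallest index among the singular blocks, and a degree-$m$ kernel map exists inside a block of index $m$. So first minimal index $N$ forces a single block equal to all of $V$. For the converse, the minor vector computed above gives a degree-$N$ kernel map. A lower-degree map is impossible, because the kernel is $1$-dimensional for all $(\mu,\lambda)$: any polynomial kernel map is a polynomial multiple of the minor vector, and the minor coordinates $\mu^N,\dots,\lambda^N$ have no common factor.

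\emph{(4).} Expanding gives $\omega(\mu,\lambda)^N=\sum_i\binom Ni\mu^i\lambda^{N-i}\,\omega_1^i\wedge\omega_2^{N-i}$. If the forms $\omega_1^i\wedge\omega_2^{N-i}$ are independent, then $\omega^N\ne0$ for all nonzero complex $(\mu,\lambda)$, which is (2). For the converse, I would use the $\mathrm{GL}(V)$-equivariant isomorphism $\Lambda^{2N}V^*\cong V\otimes\Lambda^{2N+1}V^*$. Under it, $\omega^N$ corresponds (up to a nonzero constant) to the generator of $\ker\omega$, namely the Pfaffian-minor vector. In the normal form this vector has coordinates $\pm\mu^{N-i}\lambda^i$ on distinct basis vectors. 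Linear independence of these $N+1$ monomials in $V$ then yields independence of the coefficients $\omega_1^i\wedge\omega_2^{N-i}$. The statement for any basis follows because a change of basis of the plane acts invertibly on the span of these forms.

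\emph{Genericity of the orbit.} Condition (2) defines a Zariski open subset of $\mathrm{Gr}_2(\wedge^2V^*)$. It is nonempty by the example \eqref{singular block}, and by (2)$\Rightarrow$(1) it is a single orbit. Hence it is the generic orbit.

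The main obstacle I expect is the bookkeeping in the step (2)$\Rightarrow$(1). There one must check carefully, block by block in Gauger's list, that regular blocks really produce a complex degeneracy point. One must also check that every singular block, including the trivial $1$-dimensional ones, contributes exactly one dimension to the generic kernel. Finally, this has to be transferred correctly from Gauger's pair-of-forms setting to planes.
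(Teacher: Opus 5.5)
Your proposal is correct and is essentially the paper's own argument. The paper presents the proposition as a direct consequence of Gauger's Kronecker--Weierstrass classification and obtains (1)$\Rightarrow$(2),(3),(4) from the explicit kernel vector \eqref{kernel_coordinate}, derived via $\omega^N=\iota_X\Omega$; this is exactly what your maximal-minor computation and your use of $\Lambda^{2N}V^*\cong V\otimes\Lambda^{2N+1}V^*$ accomplish, while your block-by-block kernel count for (2)$\Rightarrow$(1) spells out the converse the paper leaves to the classification (the same count the paper later uses in Proposition \ref{2cases_prop}).
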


\medskip




For example, to understand why item  (1)  of Proposition \ref{open_orbit_char_prop} implies  items  (2), (3), and (4), note that if $\omega$ is a skew-symmetric $2$-form on $V$ such that $\omega^N\neq 0$, then $X$ generates the kernel of $\omega$ if and only if
\begin{equation}
	\label{volume_ker}
	\omega ^N=\iota_X\Omega
\end{equation}
for a volume form $\Omega $ on $V$. Consequently, if $(e_1, \cdots, e_{2N+1})$ is a basis of $V$ dual to $(\varepsilon_1, \cdots,\varepsilon_{2N+1})$, then from \eqref{volume_ker} it is easy to show that
\begin{equation}
	\label{kernel_coordinate}
	\begin{split}
		~ &
		\mathrm{ker}\,(\{(\mu \varepsilon_1+\lambda \varepsilon_2)\wedge \varepsilon_{2N+1}+(\mu \varepsilon_2+\lambda \varepsilon_3)\wedge \varepsilon_{2N}+\cdots+
		(\mu \varepsilon_N+\lambda \varepsilon_{N+1})\wedge \varepsilon_{N+2})=
		\\~&
		\mathrm{span}\{\sum_{i=1}^{N+1}(-1)^i\lambda^{N+1-i} \mu^{i-1}e_i\},
	\end{split}
\end{equation}
which implies items (2), (3), and (4).
\medskip

Now we are ready to formulate our differential relation on the first jet bundle $X^{(1)}$ of the bundle $X=\mathrm {Gr}_{2N+1}(TM)\cong \mathrm{Gr}_2(T^*M)$:
\medskip

\textbf{Relation $\mathfrak R$:} \emph{A 1-jet $[D]_q \in X_q^{(1)}$ at a point $q$ is said
to satisfy the differential relation $\mathfrak R_q$ if the image of the dual Levi map $\mathrm{Im}\, L_q$
is two-dimensional and belongs to the generic (non-empty  Zariski open)
orbit in the Grassmannian $\mathrm{Gr}_2(\wedge^2 D(q)^*)$ of planes in
$\wedge^2 D(q)^*$ under the natural $\mathrm{GL}\bigl(D(q)\bigr)$--action.
Equivalently, if any one of the four conditions from Proposition
\ref{open_orbit_char_prop} holds (with $V$ replaced by $D(q)$).}
\medskip

The genericity of the orbit in relation $\mathfrak R$ justifies the statement that the class of corank $2$ distributions of rank $2N+1$ satisfying condition $\mathfrak R$ is a natural analogue of even-contact and contact distributions within the context of the considered rank and corank.

Our main result is the following:

\begin{theorem}
	\label{main_theorem}
	The differential relation $\mathfrak R$ satisfies the multi-parametric $C^0$-close $h$-principle.
\end{theorem}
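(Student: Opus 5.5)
The plan is to deduce Theorem \ref{main_theorem} from Gromov's convex integration theorem. That theorem gives the multi-parametric $C^0$-close $h$-principle for any open relation that is \emph{ample in the coordinate directions}. Openness of $\mathfrak R$ is immediate: the dual Levi map depends continuously on the $1$-jet (Remark \ref{rem:dual-levi-well-defined}), and condition \eqref{wedge_power} is an open condition on the plane $\mathrm{Im}\,L_q$, since $\mathcal P^{\mathbb C}\setminus\{0\}$ modulo scalars is compact. So all the work is in ampleness.

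Fix $q$, local coordinates, and a hyperplane $\tau\subset T_qM$ (the one dimension $\tau$ appears in the notation $\Pt$). Describe a distribution near $q$ as the kernel of two $1$-forms $\alpha_1,\alpha_2$ in a normal form, $\alpha_i=dy_i-\sum_j a_{ij}(x,y)\,dx_j$. Fix the $0$-jet and all derivatives except those along a transversal direction $\partial_t$, with $\tau=\ker dt$. The resulting principal subspace is an affine space of the derivatives $\partial_t a_{ij}$. I will compute how $\mathrm{Im}\,L_q$ depends on these derivatives. Only $d\alpha_i$ restricted to $D$ changes, by terms of the form $\xi\wedge\beta_i$, where $\xi=dt|_D$ and the $\beta_i\in D(q)^*$ are arbitrary. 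So the principal subspace maps affinely onto the set of pencils $\{\omega_1+\xi\wedge\beta_1,\ \omega_2+\xi\wedge\beta_2\}$ with $(\beta_1,\beta_2)$ free. If $\xi=0$ (that is, $\tau\supset D(q)$), the slice is a single point and ampleness is trivial. Otherwise set $W=\ker\xi\subset D(q)$, of dimension $2N$. The key reduction is that whether the pencil is generic depends on the restricted pencil $\mathcal P|_W$, spanned by $\omega_1|_W,\omega_2|_W$, together with the pair of covectors $(\beta_1|_W,\beta_2|_W)$ modulo translations by an image of $\mathcal P|_W$. Changes of $\beta_i$ along $\xi$ shift the forms only within their restriction class. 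Ampleness of $\mathfrak R$ therefore reduces to the following statement: for each fixed pencil $\mathcal P|_W$ on the even-dimensional space $W$, every connected component of the set of admissible $(\beta_1,\beta_2)$ has convex hull equal to the whole space.

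Next I will bring $\mathcal P|_W$ to Kronecker--Weierstrass normal form and check, case by case, which normal forms can occur as restrictions of a pencil lying in the open orbit. The generic case is where $\mathcal P|_W$ is regular. Then $\mathrm{Pf}(\mu\omega_1+\lambda\omega_2)|_W$ is a binary form $f$ of degree $N$. Writing the genericity condition \eqref{wedge_power} for the extended pencil in terms of $(\beta_1,\beta_2)$, I expect it to read $\mathrm{Res}(f,g)\neq0$. Here $g$ is a binary form of degree $N-1$ built linearly (after fixing $\mathcal P|_W$) from the $\beta$'s, for instance $g(\lambda,\mu)=\omega^{N-1}\wedge\beta\wedge\ldots$ with $\omega=\mu\omega_1+\lambda\omega_2$ and $\beta=\mu\beta_1+\lambda\beta_2$, and every $g$ should be attainable. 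Degenerate (singular) restrictions will be handled the same way with smaller blocks, or shown to force the relation to be empty or full on the slice.

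The main obstacle is then purely algebraic. For a fixed real binary form $f$ of degree $N$, I must show that each connected component of $\{g:\mathrm{Res}(f,g)\ne0\}$ in the space of forms of degree $N-1$ has convex hull equal to the whole space. When $f$ has real roots this complement is disconnected, and the components are labeled by the Cauchy index (winding number) of $g/f$ on $\mathbb{RP}^1$. My first attempt: inside a given component, use partial fractions $g/f=\sum c_k/(\lambda-r_k\mu)+(\text{complex-root part})$. The component fixes the signs of the residues $c_k$ at the real roots but leaves the complex-root part free. So the component contains rays in which one $|c_k|\to\infty$, with the sign constrained, in all free directions. I would then show that the convex cone generated by these directions, together with the free directions, is the whole space. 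A cleaner route is to show that the component is unbounded in directions $\pm v$ for a spanning set: add large multiples of $f\cdot h$ perturbations. For real roots of multiplicity higher than one I would perturb $f$ or use the analogous decomposition. Once this is done, Gromov's theorem gives Theorem \ref{main_theorem}.
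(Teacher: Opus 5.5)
Your set-up matches the paper: openness, writing the slice as $\omega_i=\overline\omega_i+\beta_i\wedge\varepsilon$ with $(\beta_1,\beta_2)$ free, the trivial case $\tau\supset D(q)$, and a case analysis by the Kronecker--Weierstrass type of the pencil restricted to $W=\tau\cap D$. The gap is in the cases themselves: you treat the regular case incorrectly, and the algebraic statement you reduce it to is false.

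\textbf{The regular case.} The genericity condition is not $\mathrm{Res}(f,g)\neq0$ for a scalar form $g$. With $\gamma_{\mu,\lambda}=\mu\beta_1+\lambda\beta_2$ one has $\omega_{\mu,\lambda}^N=\overline\omega_{\mu,\lambda}^N+N\,\overline\omega_{\mu,\lambda}^{N-1}\wedge\gamma_{\mu,\lambda}\wedge\varepsilon$. The second term is a $W$-valued quantity, via $\Lambda^{2N-1}W^*\cong W$, not a number. At a root of $f$, the slice meets $\mathfrak R$ only if $K=\ker\overline\omega_{\mu,\lambda}$ is two-dimensional, which forces pairwise distinct elementary divisors. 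The condition is then $\gamma_{\mu,\lambda}|_K\neq0$ (Lemma \ref{lem:kernel-extension}). That is two independent linear conditions per real root and four per conjugate pair. So the bad set is a finite union of linear subspaces of codimension at least two, and ampleness is immediate.

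Your version cannot be rescued by a better convex-hull argument. If $f$ has a real root $r$, then $g\mapsto g(r)$ is a nonzero linear functional that never vanishes on $\{\mathrm{Res}(f,g)\neq0\}$. Hence every connected component lies in an open half-space. Your own remark that a component fixes the signs of the residues $c_k$ is exactly this obstruction. Nor can you perturb $f$, since it is fixed on the principal subspace. Had your reduction been right, $\mathfrak R$ would not be ample; this is precisely how the even-rank relations $\mathfrak R_k$ of Appendix \ref{even_dim_sec} fail.

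\textbf{The singular case, which you set aside, is where the real difficulty lies.} Suppose $\dim\mathrm{Im}\,L^\ell_\tau=2$ and the restricted pencil is singular. The slice meets $\mathfrak R$ only if the pencil consists of exactly two singular blocks, of sizes $2r_1+1$ and $2r_2+1$ with $r_1+r_2=N-1$. Then $\ker\overline\omega_{\mu,\lambda}=\mathrm{span}\{k_1(\mu,\lambda),k_2(\mu,\lambda)\}$ is two-dimensional for every $(\mu,\lambda)$. Genericity holds if and only if the binary forms $P_t=\gamma_{\mu,\lambda}(k_t(\mu,\lambda))$, of degrees $r_t+1$, have no common zero in $\mathbb{CP}^1$. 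Both forms vary linearly and surjectively with $\beta$.

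So, up to a linear surjection, the slice is the complement of the resultant hypersurface in the space of pairs of real binary forms of degrees $r_1+1$ and $r_2+1$. This set is neither empty nor full. It is disconnected, its components being separated by the sign of the resultant or by the winding number of $(P_1,P_2)$. The winding-number picture you anticipated does occur, but for a pair of forms that both vary, not for a fixed $f$. What must be proved is that each component has full convex hull. The paper does this by showing that all points $\pm(x^iy^{n-i},0)$ and $\pm(0,x^ly^{m-l})$ lie in the closure of every component, and using that the components are cones. Your plan contains no argument for this step.

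The case $\dim\mathrm{Im}\,L^\ell_\tau\le1$ also needs separate treatment. It is empty for $N>1$. For $N=1$ it is either the complement of a codimension-two subspace or the set of linearly independent pairs of covectors.
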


Note that for $N\ge2$ the condition we impose in relation $\mathfrak R$ is much stronger than just the maximality of the small growth vector; therefore, in this range, Theorem \ref{main_theorem} does not follow from the results of \cite{martinez2026}. Indeed, in our setting the maximality of the small growth vector simply means that the distribution is of step $2$, i.e. that for every $q$ the space $\mathrm{Im}\,L_q$ is exactly $2$-dimensional, but this does not force it to belong to the generic orbit in $\mathrm{Gr}_2(\wedge^2 D(q)^*)$ (for instance, a pencil spanned by two decomposable forms with a common factor is not in the generic orbit for $N\ge2$). Thus $\mathfrak R$ is a proper open subrelation of the relation considered in \cite{martinez2026}: every formal solution of $\mathfrak R$ is in particular a formal solution of the latter, whereas a genuine solution of the latter need not satisfy our condition at any point. For $N=1$ the two relations coincide: every nonzero skew-symmetric form on a $3$-dimensional space has a $1$-dimensional kernel, and, since the map $\omega\mapsto\ker\omega$ identifies $\wedge^2 D(q)^*$ with $D(q)\otimes\Lambda^3D(q)^*$, the kernels of the forms of any $2$-plane in $\wedge^2D(q)^*$ span a $2$-dimensional subspace, so that every $2$-plane belongs to the generic orbit; in this case Theorem \ref{main_theorem} recovers the case of rank $3$ distributions of maximal small growth vector on $5$-manifolds treated in \cite{martinez2026}.

The proof of Theorem \ref{main_theorem} consists of checking that the
differential relation $\mathfrak R$ is ample (see
Section \ref{ample_sec} for the definition of ampleness). Then the statement
follows from the general theorem of Gromov \cite{Gromov1973} that  any ample open
differential relation satisfies the multi-parametric $C^0$-close $h$-principle (see
Theorem \ref{Gromov_ample_thm} below).

The proof of ampleness for even-contact distributions is based on the
elementary linear-algebraic version of the Darboux theorem for skew-symmetric
forms. It leads to the fact that the intersection of the defining differential relation
with any principal direction is the complement of a
thin (i.e., codimension 2) set.
In our case, the ampleness check is based on
the Kronecker-Weierstrass normal forms of the restrictions of the pencil (called \emph{reduced pencils} in Subsection \ref{preliminaries}) to all
possible hyperplanes. This is significantly more involved both because there
are more possibilities to consider and because the intersection of the
relation with certain principal subspaces is the complement of a codimension 1
subset.
This involves classical algebraic geometry, as the intersection with certain principal relations is, in essence, the complement of the zero locus of the resultant of two binary forms of given degrees with real coefficients (see Corollary~\ref{cor:reduction-to-resultant} below).


We complete the Introduction by formulating  the topological criterion for a manifold to admit a formal solution
of the differential relation $\mathfrak R$. We start by examining in greater detail what it means for a manifold to admit such a solution. First, note that the relation $\mathfrak{R}$ is the preimage of a relation on a natural ``intermediate'' bundle $\widetilde{X}$,
\begin{equation}
	\label{intermediate_bundle}
	X^{(1)} \xrightarrow{\Pi} \widetilde{X} \to X\to M,
\end{equation}
described as follows:
Given $D \in X_q = \mathrm{Gr}_{2N+1}(T_q M)$, the fiber $\widetilde{X}_{D}$ of $\widetilde{X}$ over $D$ is given by $\mathrm{Hom}\bigl(D^\perp, \bigwedge^2 D^*\bigr)$,
\begin{equation}
	\label{tildefiber}
	\widetilde{X}_{D}\cong \mathrm{Hom}\bigl(D^\perp, \textstyle{\bigwedge^2} D^*\bigr).
\end{equation}
To define the canonical map $\Pi: X^{(1)} \to \widetilde{X}$, given $y \in
	X^{(1)}_{D}$, choose a local distribution $\widehat{D}$ (i.e., a section of
$X$) whose first jet represents $y$. By Remark
\ref{rem:dual-levi-well-defined}, the dual Levi map associated with
$\widehat{D}$ at $q$ (see Definition~\ref{pencil_def}) depends only on $y$ and
not on the choice of the local distribution $\widehat{D}$ representing $y$;
thus, we get a well-defined map $\Pi: X^{(1)} \to \widetilde{X}$. Furthermore, it is
not hard to show (see Proposition  \ref{Pi_surjective}) that the map $\Pi$ is surjective, i.e. any element of
$\mathrm{Hom}\bigl(D^\perp, \bigwedge^2 D^*\bigr)$ can be realized as the dual
Levi map associated with a local distribution $\widehat{D}$ satisfying
$\widehat{D}(q) = D$.

A formal solution exists if and only if there is a section of an intermediate bundle $\widetilde{X} \to M$
with a graph belonging to $\Pi(\mathfrak{R})$, where $\Pi \colon X^{(1)} \to \widetilde{X}$ is the canonical projection. The ``only if'' part is obvious, as such a section is a composition of $\Pi$ with a formal solution of $\mathfrak{R}$. For the ``if'' part note that, by the definition of the relation $\mathfrak{R}$, the whole fiber of the bundle $\Pi \colon X^{(1)} \to \widetilde{X}$ over a point in $\Pi(\mathfrak{R})$ is contained in $\mathfrak{R}$; moreover, by Proposition \ref{Pi_surjective} below, $\Pi$ is fiberwise affine and surjective, so that the fibers of $\Pi$ over the graph of a given section of $\widetilde X$ are nonempty affine spaces, and a global lift of the section to a section of $X^{(1)}$ is obtained by patching local lifts with a partition of unity.

Although it is clear from the definition what it means for a section of $\widetilde{X} \to M$ to have a graph in $\Pi(\mathfrak{R})$, we still would like to write it down explicitly, as it is an analogue of an almost contact and almost even-contact structure in this rank and corank setting:

\begin{definition}
	\label{almost_Kronecker_def}
	Suppose $\dim M=2N+3$.
	A section of the bundle  $\widetilde X\to M$ whose graph is in  $\Pi (\mathfrak R)$  is called an \emph{almost Kronecker structure} on $M$.
	It  consists of the following data:
	\begin{enumerate}
		\item A smooth distribution  $D \subset TM$ of odd rank $2N+1$ and corank 2.
		\item A smooth vector bundle homomorphism
		      \[
			      L \colon {D}^\perp \longrightarrow \wedge^2 {D}^*
		      \]
		      which is fiberwise injective.
		\item The pair $({D}, L)$ satisfies the following algebraic fiber constraint at every point: for every $q \in M$ the pencil $L\bigl(D^\perp(q)\bigr)$ belongs to the generic  orbit in the Grassmannian $\mathrm{Gr}_2(\wedge^2 D(q)^*)$ of planes in $\wedge^2 D(q)^*$ under the natural $\mathrm{GL}\bigl(D(q)\bigr)$--action, characterized by one of the four equivalent conditions from Proposition \ref{open_orbit_char_prop} (with $V$ replaced by $D(q)$).
	\end{enumerate}
\end{definition}

Clearly, \emph{every parallelizable odd-dimensional manifold of dimension at
	least $5$ admits an almost Kronecker structure}.
Indeed, start by taking a global frame
$(e_1, \ldots, e_{2N+3})$ on $M$.
Form a distribution $D$ spanned by the
first $2N+1$ vector fields of the frame.
Take $L$ to be any linear map on $D^\perp$ whose image is
given by \eqref{singular block},
where $\varepsilon_1, \ldots, \varepsilon_{2N+3}$ is the dual coframe and
$\lambda$ and $\mu$ are coordinates on $D^\perp$ with respect to the basis
$(\varepsilon_{2N+2}, \varepsilon_{2N+3})$.

There are also many non-parallelizable manifolds that admit almost Kronecker structures (see Appendix \ref{nonpar_sec}).
Before formulating the topological criterion for exsitence of almost Kronecker sturcture, note that from \eqref{kernel_coordinate} it
follows that, for every $q\in M$, the $1$-dimensional kernels of the forms in
the pencil $L\bigl(D(q)^\perp\bigr)$ span an $N+1$-subspace $\mathcal V_1(q)$
of $D(q)$ (in the basis $(e_1, \cdots, e_{2N+1})$ used in
\eqref{kernel_coordinate}, this subspace is   spanned by $\{e_1, \dots,
	e_{N+1}\}$). Furthermore, equation \eqref{kernel_coordinate} demonstrates that
as $[\mu : \lambda]$ varies in $\mathbb{P}^1$, the $1$-dimensional kernels
sweep out a rational normal curve $\mathcal C_q$ of degree $N$ in
$\mathbb{P}\bigl(\mathcal V_1(q)\bigr)$ (the image of the Veronese embedding,
\cite[Lecture1]{HarrisAG}). Consequently, the space $\mathcal V_1(q)$ is canonically identified with the $N$-th symmetric power of the pencil, twisted by a line: the map $\alpha\mapsto L(\alpha)^N$ identifies $\mathrm{Sym}^N\bigl(D^\perp(q)\bigr)$ with $\mathcal V_1(q)\otimes\Lambda^{2N+1}D(q)^*$ (see \eqref{volume_ker}). This leads to the following Proposition (below
$\mathrm{Sym}^i$ (resp., $\Lambda ^j$) denotes the symmetric (skew-symmetric)
tensor powers of the corresponding vector bundles, and $\det D:=\Lambda^{2N+1}D$):

\begin{proposition}
	\label{prop:formal_solution}
	Suppose $N \geq 1$.
	A  $(2N+3)$-dimensional manifold $M$ admits a formal solution $(D, L)$ to the differential relation $\mathfrak R$ if and only if there exist a rank $2$ vector bundle $Q$ and a real line bundle $A$ over $M$ such that the tangent bundle $TM$ is isomorphic to the direct sum
	\begin{equation}
		\label{TM_decomp}
		TM \cong \bigl(A\otimes\mathrm{Sym}^N(Q^*)\bigr) \oplus \bigl( A^*\otimes\mathrm{Sym}^{N-1}(Q) \otimes \Lambda^2 Q \bigr)\oplus Q.
	\end{equation}
	For a given formal solution $(D,L)$ one can take $Q= TM/D$ (equivalently,  $Q^*\cong D^\perp$) and $A=\det D$. In particular, $M$ admits a formal solution with orientable $D$ if and only if \eqref{TM_decomp} holds with $A$ trivial.
\end{proposition}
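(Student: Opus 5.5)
The plan is to build both directions from the canonical linear algebra of a generic pencil, made $\mathrm{GL}(Q)\times\mathrm{GL}(A)$-equivariant so that it globalizes to bundles. Let $(D,L)$ be a formal solution and put $Q=TM/D$. Then $Q^*\cong D^\perp$, and $TM\cong D\oplus Q$ after choosing a complement. So it suffices to show that $D\cong \bigl(A\otimes\mathrm{Sym}^N(Q^*)\bigr)\oplus\bigl(A^*\otimes\mathrm{Sym}^{N-1}(Q)\otimes\Lambda^2Q\bigr)$ with $A=\det D$.

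\textbf{The subbundle $\mathcal V_1$.} For $\omega$ a $2$-form on $D(q)$, $\omega^N\in\Lambda^{2N}D^*\cong D\otimes\Lambda^{2N+1}D^*$ canonically; this is the invariant form of \eqref{volume_ker}. Polarizing $\alpha\mapsto L(\alpha)^N$ therefore gives a bundle map
\[
\mathrm{Sym}^N(D^\perp)\to D\otimes \det D^*.
\]
By \eqref{kernel_coordinate} and item (4) of Proposition \ref{open_orbit_char_prop}, this map is injective with image $\mathcal V_1\otimes\det D^*$. Hence $\mathcal V_1\cong \det D\otimes \mathrm{Sym}^N(Q^*)=A\otimes\mathrm{Sym}^N(Q^*)$, a smooth rank $N+1$ subbundle.

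\textbf{The quotient $D/\mathcal V_1$.} Inspecting the normal form \eqref{singular block}, $\mathcal V_1=\mathrm{span}\{e_1,\dots,e_{N+1}\}$ is isotropic for every form in the pencil. Consequently $Y\mapsto \bigl(\alpha\otimes X\mapsto L(\alpha)(X,Y)\bigr)$ descends to a bundle map
\[
D/\mathcal V_1\to \mathrm{Hom}(D^\perp\otimes\mathcal V_1,\mathbb R)=Q\otimes\mathcal V_1^*\cong A^*\otimes Q\otimes \mathrm{Sym}^N(Q).
\]
I will check in the normal form that this map is injective. Next I will identify its image: $Q\otimes\mathrm{Sym}^NQ=\mathrm{Sym}^{N+1}Q\oplus \mathrm{Sym}^{N-1}Q\otimes\Lambda^2Q$, and the claim is that the image is the second summand, i.e.\ the kernel of symmetrization. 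The cleanest route is equivariance. The stabilizer of the normal-form pencil surjects onto $\mathrm{GL}(Q)$, so the image is a $\mathrm{GL}_2$-invariant subspace of dimension $N$. The two summands are irreducible of dimensions $N+2$ and $N$, so the image must be the $N$-dimensional one. I will also confirm this directly on the explicit basis as a sanity check. Splitting $0\to\mathcal V_1\to D\to D/\mathcal V_1\to0$ with a metric then gives the decomposition of $D$, hence \eqref{TM_decomp}. The orientability statement follows because $A=\det D$.

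\textbf{Converse.} Given \eqref{TM_decomp}, let $D$ be the sum of the first two summands, so $TM/D\cong Q$ and $D^\perp\cong Q^*$. Define $L$ by the canonical equivariant formula: for $\beta\in Q^*$, $X=a\otimes p\in A\otimes\mathrm{Sym}^N Q^*$ and $Y=a^*\otimes s\otimes w\in A^*\otimes\mathrm{Sym}^{N-1}Q\otimes\Lambda^2Q$, set
\[
L(\beta)(X,Y)=\langle a,a^*\rangle\,\bigl\langle p,\ s\cdot(\iota_\beta w)\bigr\rangle .
\]
Here $\iota_\beta w\in Q$, and the product is taken in $\mathrm{Sym}^NQ$. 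Extend $L(\beta)$ skew-symmetrically, with both summands isotropic. This is globally well defined since it uses no choices. Taking a local frame $q_1,q_2$ of $Q$ and monomial bases, I will verify that $L$ is injective and that its image is exactly the pencil \eqref{singular block}, with $\mu,\lambda$ the coordinates of $\beta$. Then $(D,L)$ is an almost Kronecker structure, and by the lifting argument given before Definition \ref{almost_Kronecker_def} it is a formal solution.

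\textbf{Main obstacle.} I expect the main obstacle to be the image of $D/\mathcal V_1$ in the first direction, together with the matching monomial computation in the converse. Getting the twist by $A$ and $\Lambda^2Q$ exactly right requires care with signs, binomial normalizations and determinant factors. I will handle it first via the representation-theoretic argument, and then check it against \eqref{kernel_coordinate}.
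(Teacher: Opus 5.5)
Your argument is correct and has the same architecture as the paper. The steps that match are:
\begin{itemize}
\item the same bundle $\mathcal V_1\cong A\otimes\mathrm{Sym}^N(Q^*)$, obtained from $\alpha^N\mapsto L(\alpha)^N$;
\item the same pairing of $D/\mathcal V_1$ with $\mathcal V_1\otimes Q^*$. Your map is the transpose of the paper's $T\colon\mathcal V_1\otimes Q^*\to\mathcal V_2^*$, and your injectivity is its surjectivity, which the paper gets from $\mathcal V_1/\ker\omega$ being Lagrangian for each nonzero $\omega$ in the pencil;
\item in the converse, essentially the paper's formula \eqref{eq:L-def}, with the contraction moved from $p$ onto the bivector.
\end{itemize}

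The real difference is how you identify the image with $A^*\otimes\mathrm{Sym}^{N-1}(Q)\otimes\Lambda^2Q$.
\begin{itemize}
\item You use $\mathrm{GL}(Q)$-equivariance plus irreducibility of the two Clebsch--Gordan summands. This needs the stabilizer of the pencil to surject onto $\mathrm{GL}(\mathcal P)\cong\mathrm{GL}(Q)$, which you assert without proof. It is true: any basis of a generic pencil is, as a pair, a single Kronecker--Weierstrass block $E_{2N+1}(\infty)$, so any two bases are $\mathrm{GL}(D(q))$-equivalent. Alternatively, it follows from the equivariant model in your converse.
\item The paper avoids this. Under \eqref{eq:V1} the element $a\otimes\alpha^N$ spans $\ker L(\alpha)$, so $T(a\otimes\alpha^N\otimes\alpha)=0$. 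Since the $\alpha^{N+1}$ span $\mathrm{Sym}^{N+1}(Q^*)$, $\ker T$ contains $A\otimes\mathrm{Sym}^{N+1}(Q^*)$, and a rank count gives equality.
\item In your dual language: every functional $\alpha\otimes X\mapsto L(\alpha)(X,Y)$ kills $\alpha\otimes\alpha^N$. It therefore lies in the annihilator of $\mathrm{Sym}^{N+1}(Q^*)$, which is the kernel of symmetrization. Injectivity and the dimension count $N$ then finish.
\end{itemize}
Your route explains why the answer is forced by representation theory, in the spirit of Remark~\ref{rem:twist}. The paper's route is shorter, purely pointwise, and needs no facts about the stabilizer.

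One small omission concerns the ``if'' half of the last sentence of the statement. There you need $\det D\cong A$ for the $D$ built in the converse. ``$A=\det D$'' holds by definition only in the necessity direction. The fix is a determinant computation:
\begin{itemize}
\item $\det\bigl(A\otimes\mathrm{Sym}^N(Q^*)\bigr)\cong A^{\otimes(N+1)}\otimes(\det Q^*)^{\otimes N(N+1)/2}$;
\item $\det\bigl(A^*\otimes\mathrm{Sym}^{N-1}(Q)\otimes\Lambda^2Q\bigr)\cong(A^*)^{\otimes N}\otimes(\det Q)^{\otimes N(N+1)/2}$;
\item their product is $A$, so $D$ is orientable when $A$ is trivial.
\end{itemize}
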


The proposition is proven in Appendix \ref{top_sec}. The line bundle $A$ cannot be dropped in general: in Appendix \ref{nonpar_sec} we give a closed $7$-dimensional manifold for which \eqref{TM_decomp} holds with a nontrivial $A$ but fails for every rank $2$ bundle $Q$ when $A$ is trivial; see Remark \ref{rem:twist} for the reason behind the appearance of $A$. As a direct consequence of Theorem \ref{main_theorem} and Proposition \ref{prop:formal_solution} we get the following

	\begin{corollary}
		\label{existence}
		Suppose $N \geq 1$.
		A  $(2N+3)$-dimensional manifold $M$ admits a rank $(2N+1)$  distribution satisfying the relation $\mathfrak R$ if and only if the tangent bundle of $M$ satisfies \eqref{TM_decomp} for some rank $2$ bundle $Q$ and some line bundle $A$ over $M$.
	\end{corollary}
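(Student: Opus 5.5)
The plan is to deduce Corollary \ref{existence} directly from Theorem \ref{main_theorem} and Proposition \ref{prop:formal_solution}, using only the $0$-parametric part of the $h$-principle.

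\emph{Necessity.} Suppose $D$ is a rank $2N+1$ distribution on $M$ satisfying $\mathfrak R$. Its $1$-jet extension $J^1D$ is a genuine, hence formal, solution of $\mathfrak R$. Projecting by $\Pi$ gives the almost Kronecker structure $(D, L)$, where $L_q$ is the dual Levi map of $D$. The ``only if'' direction of Proposition \ref{prop:formal_solution} then yields \eqref{TM_decomp} with $Q=TM/D$ and $A=\det D$.

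\emph{Sufficiency.} Assume $TM$ decomposes as in \eqref{TM_decomp} for some rank $2$ bundle $Q$ and line bundle $A$.
\begin{enumerate}
\item By the ``if'' direction of Proposition \ref{prop:formal_solution}, $M$ carries an almost Kronecker structure, i.e.\ a section of $\widetilde X$ with graph in $\Pi(\mathfrak R)$.
\item Lift this section to a section of $X^{(1)}$, as explained before Definition \ref{almost_Kronecker_def}. By Proposition \ref{Pi_surjective}, $\Pi$ is fiberwise affine and surjective, so the fibers of $\Pi$ over the graph are nonempty affine spaces. Local lifts therefore exist and can be patched with a partition of unity.
\item Since $\mathfrak R=\Pi^{-1}(\Pi(\mathfrak R))$, every such lift is a formal solution of $\mathfrak R$.
\item By Theorem \ref{main_theorem}, the $0$-parametric case $k=0$ of the multi-parametric $h$-principle applies. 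So this formal solution is homotopic within $\mathrm{Sec}(\mathfrak R)$ to a holonomic section $J^1D'$, and $D'$ is a genuine distribution satisfying $\mathfrak R$.
\end{enumerate}

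This argument is only bookkeeping, provided Theorem \ref{main_theorem} and Proposition \ref{prop:formal_solution} are taken as given. The real content sits in two places. The first is the ampleness verification behind Theorem \ref{main_theorem}. The second is the bundle identification in Proposition \ref{prop:formal_solution}: one must reconstruct $L$ from \eqref{TM_decomp} by realizing $D\cong \mathcal V_1\oplus D/\mathcal V_1$ with $\mathcal V_1\cong A\otimes \mathrm{Sym}^N(Q^*)$, and pairing this with the complementary summand so as to reproduce the normal form \eqref{singular block} fiberwise.

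Within the corollary itself, I expect the only point needing care to be the affine lifting in step 2, which is exactly what Proposition \ref{Pi_surjective} supplies.
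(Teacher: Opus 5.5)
Your argument is correct and follows the same route as the paper, which derives the corollary directly from Theorem \ref{main_theorem} and Proposition \ref{prop:formal_solution}. Necessity comes from applying Proposition \ref{prop:formal_solution} to $\Pi\circ J^1D$. Sufficiency lifts the almost Kronecker structure to a formal solution of $\mathfrak R$, using the fiberwise affine surjectivity of $\Pi$ (Proposition \ref{Pi_surjective}) and $\mathfrak R=\Pi^{-1}(\Pi(\mathfrak R))$, and then invokes the $0$-parametric $h$-principle.
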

The global decomposition of the tangent bundle given in \eqref{TM_decomp}
is a strong topological restriction on $M$.  For example, among the spheres
$S^{2N+3}$ with $N\ge1$, the only one satisfying
\eqref{TM_decomp}, and therefore the only one admitting an almost
Kronecker structure and, by Corollary \ref{existence}, a rank
$2N+1$ distribution satisfying our relation $\mathfrak R$, is $S^{7}$, which is
parallelizable.

Indeed, suppose that $S^{2N+3}$ satisfies \eqref{TM_decomp} for some
rank two bundle $Q$ and line bundle $A$.  Since $2N+3\ge5$, the sphere $S^{2N+3}$ is simply connected
and $H^{2}(S^{2N+3};\mathbb Z)=0$; hence $A$ is trivial, $Q$ is orientable and its Euler class
vanishes.  Oriented plane bundles over a CW complex $X$ are classified by their
Euler class in $H^{2}(X;\mathbb Z)$, because they are exactly the underlying real
bundles of complex line bundles and $BSO(2)=BU(1)$ is an Eilenberg--MacLane space
$K(\mathbb Z,2)$; equivalently, over a sphere $S^{m}$ with $m\ge3$ they are
classified by $\pi_{m-1}\bigl(SO(2)\bigr)=\pi_{m-1}(S^{1})=0$ (see, e.g.,
\cite{Hatcher-VBKT}, Proposition 1.14).  Therefore $Q$ is trivial, every summand
on the right-hand side of \eqref{TM_decomp} is trivial, and
$TS^{2N+3}$ is a trivial bundle of rank $2N+3$, i.e. the sphere is
parallelizable.  By the theorem of Bott--Milnor \cite{BottMilnor} and Kervaire
\cite{Kervaire}, or equivalently by Adams' celebrated theorem on the Hopf
invariant one \cite{Adams}, the only parallelizable spheres are $S^{1}$, $S^{3}$
and $S^{7}$, and only the last of them is of dimension $2N+3$ with $N\ge1$; it
corresponds to $N=2$.  Conversely, being parallelizable, $S^{7}$ does satisfy
\eqref{TM_decomp}, with $Q$ and $A$ trivial.

The same argument applies verbatim to every $(2N+3)$-dimensional manifold $M$
with $H^{1}(M;\mathbb Z/2)=0$ and $H^{2}(M;\mathbb Z)=0$ (the first condition forces $A$ to be trivial and $Q$ to be orientable, the second forces $Q$ to be trivial), and in particular to
every $2$-connected one: such a manifold admits an almost Kronecker structure if
and only if it is parallelizable.  For general $M$ this is far from being the
case: in Appendix \ref{nonpar_sec} we give three non-parallelizable
examples on which the desired distribution exists, namely
$\mathbb{RP}^{5}\times T^{4}$ (closed orientable, of dimension $9$, i.e. $N=3$),
$\mathbb{RP}^{2}\times T^{3}$ (closed nonorientable, of dimension $5$, i.e.
$N=1$), both with $A$ trivial, and $\mathbb{RP}^{4}\times T^{3}$ (closed nonorientable, of dimension $7$, i.e. $N=2$), for which $A$ is necessarily nontrivial.

    The paper is organized as follows. In Section \ref{ample_sec}, we recall the definition of ampleness and Gromov's convex integration theorem. Our main theorem is proved in Section \ref{proof-of-main-theorem}. More specifically, in Section \ref{preliminaries}, we reduce the question of ampleness to another bundle on which explicit calculations can be made. In Section \ref{kw-normal-formal}, we review and restate the Kronecker--Weierstrass normal form in the language of two-forms. We show (Proposition \ref{2cases_prop}) that in order for principal subspaces to have a nontrivial intersection with the relation, the corresponding reduced pencil must consist of either regular indecomposable blocks only (with distinct elementary divisors) or precisely two singular indecomposable blocks.

Ampleness in the regular case is proved in Section \ref{ampleness-regular} by showing that the intersection of the defining differential relation with the principal directions is the complement of a thin (i.e., codimension 2) set. The singular case is more involved and interesting: in Section \ref{ampleness-singular}, we reduce the question to the ampleness of the complement of the zero level set of the resultant of two binary forms of given degrees; in Section \ref{sec:components}, we describe the connected components of the complement of this resultant's zero level set; and in Section \ref{sec:ampleness-resultant}, we conclude the proof of ampleness in the singular case. In both cases the key step is an elementary description of the kernel of a skew-symmetric form in terms of its restriction to a hyperplane (Lemma \ref{lem:kernel-extension}).  Appendix \ref{top_sec} contains the proof of Proposition \ref{prop:formal_solution}, and Appendix \ref{nonpar_sec} the examples mentioned above. In Appendix \ref{even_dim_sec}, for completeness, we briefly discuss corank $2$ distributions on even-dimensional manifolds. There the regularity of the associated pencil defines a natural open relation which is ample, so that the $h$-principle holds for it, whereas the finer open relations prescribing the number of real roots of the Pfaffian of the pencil are not ample; for them plain convex integration is insufficient and one would have to resort to the convex integration with avoidance of \cite{MAdP2021}, as was done there for hyperbolic $(4,6)$ distributions.


	\section{Ampleness and convex integration}
	\label{ample_sec}
	In this section, we briefly describe the condition of ampleness of a differential relation. We restrict ourselves to the differential relations on a $1$-jet bundle $X^{(1)}$.
	The material of this section is standard; the details can be found in \cite[Part 4]{EliashbergMishachev2002}.

	Let $V = \ker d\pi$ be the vertical distribution on $X$, consisting of the tangent spaces to the fibers of the bundle $\pi \colon X \to M$. Consider the first jet bundle $X^{(1)}$ as a bundle over $X$.
	The fiber $X^{(1)}|_x$ of this bundle over a point $x \in X$ can be identified with $\Hom(T_{\pi(x)}M, V_x)$,
	\begin{equation}
		\label{identification}
		X^{(1)}|_x \cong \Hom(T_{\pi(x)}M, V_x),
	\end{equation}
	after a choice of a global connection on $X$, i.e., a choice of a splitting $TX = H \oplus V$, or, equivalently, of a projection $\mathbb{A} \colon TX \to V$. The identification \eqref{identification} is described explicitly as follows:
$$J^1 f \bigl(\pi(x)\bigr)\mapsto \mathbb{A}_x \circ df_{\pi(x)},$$
for every local section $f$ of the bundle $X$ with $f\bigl(\pi(x)\bigr)=x$.


Let $x \in X$, $\tau \subseteq T_{\pi(x)} M$ be a hyperplane, and $\ell \colon \tau \to V_x$
a linear map.
The hyperplane $\tau$ is called a \textit{principal direction} at $\pi(x)$.
The \textit{principal subspace} with respect to $\tau$ and $\ell$ is defined to be
\begin{equation}
	\label{principal_subspace}
	P_\tau^\ell = \{ L \in \Hom(T_{\pi(x)} M, V_x) \colon L|_\tau = \ell \} \subseteq X^{(1)}|_x,
\end{equation}
where the inclusion is understood through the identification \eqref{identification}. Obviously, $P_\tau^\ell$ is an affine subspace of $\Hom(T_{\pi(x)} M, V_x)$.
\begin{remark}
	The map $\ell$ is just a coordinate-free way of prescribing first-order derivatives along
	$(n-1)$-directions.
	In local coordinates, $\ell$ can be understood as follows: fix coordinates
	$x_1, \ldots, x_n$ of $M$ such that $\tau = \{ dx_n = 0 \}$.
	Then, $\ell$ prescribes the first-order derivatives along
	$\partial_{x_1}, \ldots, \partial_{x_{n-1}}$.
\end{remark}
\begin{definition}
	A differential relation $\mathcal{R}$ is called \textit{ample} if for every $x\in X$, principal direction $\tau$ at $\pi(x)$, and linear map  $\ell \colon \tau \to V_x$, its  intersection $ \mathcal{R} \cap P_\tau^\ell$ with the corresponding principal subspace $P_\tau^\ell$
	is an ample subset of $P_\tau^\ell$, i.e. if either
	$ \mathcal{R} \cap P_\tau^\ell = \emptyset$ or the convex hull of every
	connected component of $\mathcal{R}\cap P_\tau^\ell  $ equals $P_\tau^\ell$.
\end{definition}

The following  theorem of Gromov connects ampleness with the $h$-principle:
\begin{theorem}[Gromov 1973, \cite{Gromov1973}]
	\label{Gromov_ample_thm}
	Let $\mathcal{R} \subseteq X^{(1)}$ be an open ample differential relation.
	Then $\mathcal R$ satisfies the multi-parametric $C^0$-close $h$-principle.
\end{theorem}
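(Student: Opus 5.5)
Since this is Gromov's convex integration theorem, I would not redo it in full. The plan below follows the standard route of \cite[Part 4]{EliashbergMishachev2002}; the argument is local and is then globalized.

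The first step is a reduction. By choosing a connection as in \eqref{identification}, I would work in a chart, where sections of $X$ are maps $f\colon U\subset\mathbb R^n\to\mathbb R^m$ and $X^{(1)}$ has fibers $\Hom(\mathbb R^n,\mathbb R^m)$. Given a formal solution $F=(f_0,\Phi)$ with $\Phi$ the formal derivative, I interpolate between $df_0$ and $\Phi$ in $n$ stages. At stage $j$ only the column $\partial/\partial x_j$ is changed, the other columns being fixed. Each stage therefore moves inside a principal subspace $P^{\ell}_{\tau}$ with $\tau=\{dx_j=0\}$, and ampleness applies to exactly such a move. Openness of $\mathcal R$ lets me replace $F$ by a nearby formal solution with the same homotopy class, so that all the required paths lie in a compact subset of $\mathcal R$ with some margin $\varepsilon$.

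The core of the argument is the one-dimensional convex integration lemma. Suppose that for each parameter point we are given a target vector $v$ in the principal subspace, and that $v$ lies in the convex hull of the connected component $\Omega$ of $\mathcal R\cap P^\ell_\tau$ containing the current derivative. By ampleness this hull is all of $P^\ell_\tau$, so the hypothesis holds automatically. I would then construct a loop $\gamma_s\colon S^1\to\Omega$, based at the current derivative, whose average $\int_0^1\gamma_s$ equals $v$, depending continuously on all parameters (base point and the disk parameter in $\mathbb D^k$). This is done by writing $v$ as a convex combination of points of $\Omega$, joining them by paths in $\Omega$, and smoothing, with continuity in parameters coming from connectedness and a partition of unity argument. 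Next I set
\[
g(x)=f(x',0)+\int_0^{x_j}\gamma_{(x',t)}(Nt)\,dt .
\]
For large $N$ this satisfies $\partial_j g\in\Omega$ and $|g-f|=O(1/N)$. It is $C^0$-close to the original section, and the other partial derivatives differ from those of $f$ by $O(1/N)$. This $O(1/N)$ error is what keeps the new $1$-jet inside the open set $\mathcal R$.

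Iterating over $j=1,\dots,n$ gives a holonomic section in $\mathcal R$ that is $C^0$-close to $f_0$ on a cube. The homotopy through formal solutions is obtained by following the loops and the linear interpolation of the underlying sections. To globalize, I cover $M$ by cubes and apply a relative version of the local statement, keeping the section unchanged near the part where it is already holonomic. Running everything with the parameters in $\mathbb D^k$ throughout, and relative to $\partial\mathbb D^k$, gives the multi-parametric $C^0$-close $h$-principle.

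I expect the main obstacle to be the parametric and relative construction of the loops $\gamma$. They must depend continuously on the point and on the parameters, stay in the correct connected component, and be constant (unchanged) where the solution is already genuine. The first thing I would try is the standard lemma that the space of loops in $\Omega$ with a prescribed barycenter has a fibration property over $\Omega\times\operatorname{int}\mathrm{Conv}\,\Omega$, which gives continuous parametric choices of such loops.
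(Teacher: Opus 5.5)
The paper gives no proof of this statement: it quotes it from \cite{Gromov1973} and points to \cite[Part 4]{EliashbergMishachev2002}. Your outline is the iterated, coordinate-by-coordinate convex integration argument of that reference, so it takes essentially the same route. Two points you leave implicit are handled there in the standard way: the loop families must be smooth in the space variables, not merely continuous, to get the $O(1/N)$ bound on the other partial derivatives; and the relative step needs a cutoff, because the formula for $g$ alone does not keep $g=f$ near the set where the solution is already holonomic.
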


\section{Proof of Theorem \ref{main_theorem}}
\label{proof-of-main-theorem}

\subsection{Preliminaries}
\label{preliminaries}
As already mentioned before, the proof consists of checking ampleness of the differential relation $\mathfrak R$ so that the result will be the direct consequence of Theorem \ref{Gromov_ample_thm}.
In fact, we will work with the relation $\Pi(\mathfrak{R})$ on the intermediate bundle $\widetilde{X}$ instead of the relation $\mathfrak{R}$ on $X^{(1)}$. 
We say that the relation $\Pi(\mathfrak R)$ is ample, if for every principal subspace $P_\tau^\ell$ as in \eqref{principal_subspace}, the set $\Pi(\mathfrak{R}) \cap \Pi(P_\tau^\ell)$ is ample in $\Pi(P_\tau^\ell)$, i.e., either
\(
\Pi(\mathfrak{R}) \cap \Pi(P_\tau^\ell) = \emptyset
\)
or the convex hull of every connected component of $\Pi(\mathfrak{R}) \cap \Pi(P_\tau^\ell)$ equals $\Pi(P_\tau^\ell)$.
Taking into account the identifications \eqref{identification} and \eqref{tildefiber} of the fibers of the bundles $X^{(1)} \to X$ and $\widetilde{X} \to X$, respectively, with vector spaces, as well as the definition of the projection $\Pi$ (see the paragraph after \eqref{tildefiber}) we have the following
\begin{proposition}
\label{Pi_surjective}
The projection $\Pi:X^{(1)}\to \widetilde X$  is fiberwise linear and surjective. Consequently, the relation $\mathfrak{R}$ is ample if $\Pi(\mathfrak{R})$ is ample. 
\end{proposition}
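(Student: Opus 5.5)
We work in a fiber over a fixed $x=D\in X_q=\mathrm{Gr}_{2N+1}(T_qM)$ and write everything in local coordinates, after which the statement reduces to a linear computation plus an elementary convexity remark.

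\emph{Coordinates.} Choose a basis of $T_qM$ adapted to $D$, with $e_1,\dots,e_{2N+1}$ spanning $D$ and $e_{2N+2},e_{2N+3}$ complementary. Nearby $(2N+1)$-planes are graphs of linear maps $D\to W$, where $W=\mathrm{span}\{e_{2N+2},e_{2N+3}\}$. This gives $V_x\cong\Hom(D,W)$. Using the flat connection of a local trivialization, we get
\[
X^{(1)}|_x\cong \Hom\bigl(T_qM,\Hom(D,W)\bigr).
\]
A $1$-jet is then represented by a local section $\widehat D(p)=\mathrm{graph}(A(p))$ with $A(q)=0$ and $dA_q=\Phi$.

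\emph{The dual Levi map.} Take $\alpha\in D^\perp$, i.e. a constant covector vanishing on $D$, which we view via $W^*$. The form $\hat\alpha=\alpha-\alpha\circ A$ (suitably extended) is a defining form of $\widehat D$. At $q$ we have $d\hat\alpha|_D=-d(\alpha\circ A)|_D$. Concretely, for $X,Y\in D$,
\[
L(\alpha)(X,Y)=-\bigl(\alpha(\Phi(X)Y)-\alpha(\Phi(Y)X)\bigr).
\]
So $\Pi(\Phi)$ is the skew-symmetrization of $\Phi|_D$, viewed as an element of $D^*\otimes D^*\otimes W$, paired with $D^\perp\cong W^*$ (up to sign). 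This is manifestly linear in $\Phi$. The affine structure on the fibers, coming from the chosen connection, matches the linear structure on $\Hom(D^\perp,\wedge^2D^*)$. Hence $\Pi$ is fiberwise linear (affine, if one prefers to avoid the choice of splitting).

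\emph{Surjectivity.} Given $\beta\in\wedge^2D^*\otimes W$, choose $\Phi(X)Y=\tfrac12\beta(X,Y)$ for $X,Y\in D$ and set $\Phi$ to vanish on $W$. The skew part of $\Phi|_D$ is then $\beta$, so every element of $\Hom(D^\perp,\wedge^2D^*)$ is realized.

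The main care point is verifying that this coordinate formula agrees with the intrinsic definition via Remark~\ref{rem:dual-levi-well-defined}, i.e. that the identification does not depend on the choices. The cleanest way to handle this is to note that $\Pi$ is well defined and smooth, and that in any trivialization it is given by the formula above.

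\emph{Ampleness.} Let $P=P_\tau^\ell$ be a principal subspace. Since $\Pi$ is linear and surjective on fibers, $\Pi(P)$ is an affine subspace and $\Pi|_P\colon P\to\Pi(P)$ is an affine surjection. Its fibers are parallel affine subspaces, hence connected. By definition of $\mathfrak R$ we have $\mathfrak R=\Pi^{-1}(\Pi(\mathfrak R))$, so
\[
\mathfrak R\cap P=(\Pi|_P)^{-1}\bigl(\Pi(\mathfrak R)\cap\Pi(P)\bigr).
\]
An affine surjection with connected fibers is a trivial affine bundle: choose a linear section $s$ so that $P\cong\Pi(P)\times K$. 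Under this identification, connected components of the preimage are exactly the preimages $C\times K$ of the components $C$ of $\Pi(\mathfrak R)\cap\Pi(P)$. The convex hull of $C\times K$ is $\mathrm{conv}(C)\times K$. This equals $P$ precisely when $\mathrm{conv}(C)=\Pi(P)$. Emptiness also transfers. Therefore ampleness of $\Pi(\mathfrak R)$ implies ampleness of $\mathfrak R$.
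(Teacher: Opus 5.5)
Your proof is correct and follows essentially the same route as the paper. Both arguments compute $\Pi$ in coordinates as the antisymmetrization of first derivatives restricted to $D$, read off linearity and surjectivity from that formula, and then transfer ampleness by writing the affine surjection $P_\tau^\ell\to\Pi(P_\tau^\ell)$ as a product with its connected, convex fibre. The only difference is the choice of coordinates: you use graph coordinates $\widehat D=\mathrm{graph}(A)$ with the normalized defining forms $\alpha-\alpha\circ A\circ\mathrm{pr}_D$, whereas the paper uses arbitrary pairs of defining forms modulo the gauge \eqref{eq:gauge}. Your gauge-fixed chart makes the paper's explicit check that the gauge terms vanish on $D$ unnecessary, since you need only the well-definedness of the dual Levi map from \eqref{Cartan}.
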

\begin{proof}
The map $\Pi$ admits the following explicit description as an
antisymmetrization, which in particular shows that it is well defined
and surjective.

Set $n:=\dim M=2N+3$ and fix local coordinates $(x^1,\dots,x^n)$
centred at $q$.  A local corank-two distribution $\widehat D$ near $q$
can be written as $\widehat D=\ker\alpha_1\cap\ker\alpha_2$ for a pair
of pointwise linearly independent $1$-forms
$\alpha_k=\sum_{j=1}^n a_{kj}\,dx^j$, $k=1,2$, and we let
\begin{equation}\label{eq:jacobians}
  (J_k)_{ij}:=\frac{\partial a_{kj}}{\partial x^i}(q),
  \qquad k=1,2,
\end{equation}
be the matrices of first derivatives of their coefficients at $q$ (the
transposed Jacobi matrices, the first index being the direction of
differentiation).  The point $x\in X$ determined by $\widehat D(q)$ is
$D^\perp=\operatorname{span}\{\alpha_1(q),\alpha_2(q)\}$, and the pair
$(J_1,J_2)$ determines the $1$-jet $y=J^1\widehat D(q)$ over $x$,
although not bijectively: two pairs of defining forms for the same
distribution differ by $\widetilde\alpha_k=\sum_l c_{kl}\alpha_l$ with
$\bigl(c_{kl}\bigr)$ a $\mathrm{GL}_2(\mathbb{R})$-valued function, and if
$\bigl(c_{kl}(q)\bigr)$ is the identity (so that the basis of $D^\perp$
is unchanged) then
\begin{equation}\label{eq:gauge}
  \widetilde J_k=J_k+\sum_{l=1,2} dc_{kl}(q)\otimes\alpha_l(q).
\end{equation}
Thus the fibre $X^{(1)}|_x$ is identified with the quotient of
$\mathrm{Mat}_n(\mathbb{R})\oplus\mathrm{Mat}_n(\mathbb{R})$ by the subspace
$\bigl(T^*_qM\otimes D^\perp\bigr)^{\oplus2}$ of the terms appearing in
\eqref{eq:gauge}; the dimensions agree with \eqref{identification}, since
$2n^2-2\cdot 2n=2n(n-2)=\dim\operatorname{Hom}\bigl(T_qM,V_x\bigr)$.

By \eqref{eq:jacobians},
$\bigl(J_k-J_k^{T}\bigr)_{ij}
 =\partial_i a_{kj}(q)-\partial_j a_{ki}(q)
 =d\alpha_k(q)\bigl(\partial_{x^i},\partial_{x^j}\bigr)$,
i.e.\ $J_k-J_k^{T}$ is precisely the matrix of the $2$-form
$d\alpha_k(q)$ on $T_qM$.  Using the basis $\bigl(\alpha_1(q),\alpha_2(q)\bigr)$
of $D^\perp$ to identify the fibre
$\widetilde X_D=\operatorname{Hom}\bigl(D^\perp,\bigwedge^2 D^*\bigr)$ of
\eqref{tildefiber} with pairs of skew-symmetric forms on $D=\widehat D(q)$,
the definition of the dual Levi map, $L_q(\alpha_k)=d\alpha_k|_{D}$,
therefore reads
\begin{equation}\label{eq:Pi-antisym}
  \Pi(y)=\Bigl(\bigl(J_1-J_1^{T}\bigr)\big|_{D},\ \bigl(J_2-J_2^{T}\bigr)\big|_{D}\Bigr):
\end{equation}
\emph{the map $\Pi$ is antisymmetrization followed by restriction to
$D$.}

The restriction to $D$ is exactly what makes \eqref{eq:Pi-antisym}
independent of the choices: each ambiguity term in \eqref{eq:gauge}
contributes $v\otimes\alpha_l-\alpha_l\otimes v$ with $v=dc_{kl}(q)$, and
$\bigl(v\otimes\alpha_l-\alpha_l\otimes v\bigr)(X,Y)
 =v(X)\alpha_l(Y)-v(Y)\alpha_l(X)=0$ for $X,Y\in D$, since
$\alpha_l|_{D}=0$.  (Note also that the individual matrices $J_k$ depend
on the choice of coordinates, whereas their antisymmetrizations
$J_k-J_k^{T}$, being the matrices of the $2$-forms $d\alpha_k(q)$, do
not.)  For a general change of defining forms, with
$\bigl(c_{kl}(q)\bigr)\in\mathrm{GL}_2(\mathbb{R})$ arbitrary, both sides of
\eqref{eq:Pi-antisym} transform by the same matrix
$\bigl(c_{kl}(q)\bigr)$, so $\Pi$ does not depend on the choice of the
basis of $D^\perp$ either.

Fiberwise linearity of $\Pi$ is immediate from the description, since
        $J\mapsto\bigl(J-J^{T}\bigr)\big|_{D}$ is linear and \eqref{eq:gauge} identifies
        the fibre as a quotient by a linear subspace.
Besides, \eqref{eq:Pi-antisym} makes the surjectivity of $\Pi$ evident:
given $\Lambda\in\operatorname{Hom}\bigl(D^\perp,\bigwedge^2D^*\bigr)$,
extend each of the two skew-symmetric forms $\Lambda(\alpha_k(q))$ on
$D$ to a skew-symmetric matrix $A_k$ on $T_qM$ and take
$J_k:=\tfrac12A_k$, so that $J_k-J_k^{T}=A_k$; any local distribution
whose defining forms have these first derivatives at $q$ realizes
$\Lambda$ as its dual Levi map at $q$. Finally, the  surjective linear map $\Pi$
        carries the affine subspace $\Pt$ onto $\Pi(\Pt)$ with affine fibres,
        so $\pi_0\bigl(\Pi^{-1}(S)\bigr)\cong\pi_0(S)$ and
        $\operatorname{conv}\bigl(\Pi^{-1}(C)\bigr)
         =\Pi^{-1}\bigl(\operatorname{conv}C\bigr)$.
\end{proof}

 By Proposition \ref{Pi_surjective} the ampleness of $\Pi(\mathfrak R)$ implies our main Theorem \ref{main_theorem}), so we will focus on showing the former.
 
Fix  a point $x$ in the bundle $X$, i.e. a point $q=\pi(x)\in M$ and a codimension $2$ subspace $D$ of $T_qM$. Also, fix a principal direction $\tau$ (i.e., a hyperplane in $T_qM$) and a linear map $\ell:\tau\to V_x$, where, as before,  $V$ is the vertical distribution of the bundle $X\to M$. Let $r_\tau: \bigwedge^2 D^*\to  \bigwedge^2 (\tau\cap D)^*$ be the operator of restriction of  skew-symmetric $2$-forms on $D$ to $\tau\cap D$,
\begin{equation}
	\label{r_tau}
	r_\tau(\omega):=\omega|_{\tau\cap D}, \quad \forall \omega\in \textstyle{\bigwedge^2} D^*\cong (\textstyle{\bigwedge^2} D)^*\end{equation}
Further, given a point $L$ in the fiber of $\widetilde X$ over $x$, i.e. an element of  $\mathrm{Hom}\bigl(D^\perp, \bigwedge^2 D^*\bigr)$, define
$L_\tau\in \mathrm{Hom}\bigl(D^\perp, \bigwedge^2 (\tau\cap D)^*\bigr)$ by
\begin{equation}
	\label{L_tau}
	L_\tau(\alpha):=r_\tau\circ L(\alpha), \quad  \forall\alpha\in D^\perp)
\end{equation}
\begin{remark}
	\label{the_same_rem}
	By construction, for every $y\in P_\tau^\ell$, $\Pi(y)_\tau$ is the same element of $\mathrm{Hom}\bigl(D^\perp, \bigwedge^2 (D\cap \tau)^*\bigr)$. Consequently, $\mathrm{Im}(\Pi(y)_\tau)$ is the same subspace of $\bigwedge^2 (D\cap \tau)^*$ for every $y\in P_\tau^\ell$.
\end{remark}
First, assume that  $D\subset \tau$. Then $\Pi(y)=\Pi(y)_\tau$ for all $y\in P_\tau^\ell$. Therefore, $\Pi( P_\tau^\ell)$ consists of exactly  one element of $\mathrm{Hom}\bigl(D^\perp, \bigwedge^2 D^*\bigr)$, which is either in $\Pi(\mathfrak R)$ or not. Therefore, in this case
$\Pi(\mathfrak R)\cap \Pi( P_\tau^\ell)$ is trivially ample in $\Pi(P_\tau^\ell)$.

Now assume that $D\not\subset\tau$. Denote by $Z_\tau$ the subspace in $\bigwedge^2 D^*$ consisting of all skew-symmetric $2$-forms on $D$ whose restriction to $\tau\cap D$ is identically zero.
\begin{equation}
	\label{Z_tau}
	Z_\tau=\{\alpha\in \bigwedge^2 D^*: \alpha|_{\tau\cap D}=0\}
\end{equation}

Further, fixing a line $e$ in $D$ transverse to $\tau\cap D$, i.e. such that $D=(\tau\cap D)\oplus e$, let $K_e$ denotes the subspace in $\bigwedge^2 D^*$ consisting of all skew-symmetric $2$-forms on $D$ whose kernel contains $e$. Then we have the following splitting
\begin{equation}
	\label{form_splitting}
	\textstyle{\bigwedge^2} D^*=K_e\oplus Z_\tau.
\end{equation}
Moreover, if  $\varepsilon \in D^*$ such that
\begin{equation}
	\label{varepsilon}
	\ker\, \varepsilon =\tau\cap D
\end{equation}
then
\begin{equation}
	\label{K_tau}
	Z_{\tau}=\mathrm{Im} (\varepsilon \,\wedge)=\{\varepsilon \wedge \beta: \beta \in D^*\},
\end{equation}
where $\varepsilon \wedge: D^*\to \wedge^2 D^*$ is defined by $\beta\mapsto \varepsilon\wedge \beta$.

Let $\mathrm{pr}_{K_e}$ denote the projection to $K_e$ with respect to the splitting \eqref{form_splitting}. Note that there is a natural identification
\begin{equation}
	\label{eperp}
	(\tau\cap D)^*\cong D^*\cap (e)^\perp,
\end{equation}
where, similarly to the previous notations,  $e^\perp$ denotes the space of elements of $D^*$ annihilating $e$. Taking into account this identification and splitting \eqref{form_splitting} there exists the unique map $\beta \in \mathrm{Hom}\bigl(\wedge^2 D^*, (\tau\cap D)^*\bigr)$ such that
for any $\omega \in \bigwedge^2 D^*$ we have
\begin{equation}
	\omega=\mathrm{pr}_{K_e} (\omega)+\beta(\omega)\wedge \epsilon.
	\label{omegasplitting}
\end{equation}
Besides, the operator $r_\tau$  as in \eqref{r_tau}, restricted to $K_e$, defines the bijection between $K_e$ and  $ \bigwedge^2 (\tau\cap D)^*$,
\begin{equation}
	\label{Ke_id}
	K_e\stackrel{r_\tau}{\cong}\textstyle{\bigwedge^2} (\tau\cap D)^*.
\end{equation}
Using this identification and Remark \ref{the_same_rem} we can conclude that
for every $y \in P_\tau^\ell$, the map
\begin{equation}
	\label{Ltauell0}
	\alpha\mapsto \mathrm{pr}_{K_e}\bigl( \Pi(y)(\alpha)\bigr), \quad \alpha \in D^\perp
\end{equation}
is the same element of $\mathrm{Hom}(D^\perp, K_e)$, which will be called the \emph{reduced pencil associated with} $P_\tau^\ell$ and will be  denoted by $L_\tau^\ell$,
\begin{equation}
	\label{L_tau_ell}
L_\tau^\ell:=\mathrm{pr}_{K_e}\circ \Pi(y),  \quad y\in P_\tau^\ell.
\end{equation}

Finally, from the latter statement and decomposition \eqref{omegasplitting} it follows that the map
$$B: \Pi(P_\tau^\ell) \to \mathrm{Hom}(D^\perp, (\tau\cap D)^*)$$ define by

\begin{equation}
	\label{B_map}
	L\in \Pi(P_\tau^\ell)\stackrel{B}{\longmapsto} \Bigl(\alpha\in D^\perp\mapsto \beta(L\bigl(\alpha)\bigr)\Bigl),
\end{equation}
where $\beta$ is as in \eqref{omegasplitting}, is a linear bijection.
Therefore, ampleness of $\Pi(\mathfrak R)\cap \Pi(P_\tau^\ell)$ in $\Pi(P_\tau^\ell)$ will follow from ampleness of the image of $\Pi(\mathfrak R)\cap \Pi(P_\tau^\ell)$ under $B$ in
$\mathrm{Hom}(D^\perp, (\tau\cap D)^*)$.

Fix a basis $(p_1, p_2)$ in $D^\perp$. Recalling that $\Pi(y)\in \mathrm{Hom}\bigl(D^\perp, \bigwedge^2 D^*\bigr)$,
let
\begin{equation}
\label{translation_to_old_1}
	\omega_i:=\Pi(y)(p_i), \quad \overline\omega_i:=\mathrm{pr}_{K_e} (\omega_i), \quad \beta_i:=\beta(\omega_i), \quad i=1,2,
\end{equation}
so that the decomposition \eqref{omegasplitting} for forms $\omega_i$ reads
\begin{equation}
	\label{translation_to_old_2}
	\omega_i=\overline \omega_i+\beta_i\wedge \epsilon, \quad i=1,2.
\end{equation}
By construction,
\begin{equation}
	\label{baromega}
	\mathrm{Im}\, L_\tau^\ell=\mathrm{span}\,\{\overline \omega_1, \overline\omega_2 \}.
\end{equation}

As before, we assume that $\dim D=2N+1$. 

From \eqref{translation_to_old_2} and $(\beta_i\wedge\epsilon)^2=0$ we obtain

\begin{equation}\label{eq:expansion}
\omega_1^{\ell}\wedge\omega_2^{N-\ell}
  =\overline{\omega}_1^{\ell}\wedge\overline{\omega}_2^{N-\ell}
  +\ell\,\overline{\omega}_1^{\ell-1}\wedge\overline{\omega}_2^{N-\ell}\wedge\beta_1\wedge\varepsilon
  +(N-\ell)\,\overline{\omega}_1^{\ell}\wedge\overline{\omega}_2^{N-\ell-1}\wedge\beta_2\wedge\varepsilon,
\end{equation}
for $0\le\ell\le N$, where the terms with a negative exponent are multiplied by
zero and are therefore absent.  We will need the following lemma.

\begin{lemma}\label{lem:span-at-most-two}
Assume that $\overline{\omega}_2=c\,\overline{\omega}_1$ for some $c\ne 0$, and set
\begin{align}
\label{eq:v}
  v&:=\overline{\omega}_1^{N}+\frac{N}{c}\,\overline{\omega}_1^{N-1}\wedge\beta_2\wedge\varepsilon,\\
\label{eq:w}
  w&:=\overline{\omega}_1^{N-1}\wedge\beta_1\wedge\varepsilon
     -\frac{1}{c}\,\overline{\omega}_1^{N-1}\wedge\beta_2\wedge\varepsilon .
\end{align}
Then
\begin{equation}\label{eq:power-formula}
  \omega_1^{\ell}\wedge\omega_2^{N-\ell}=c^{\,N-\ell}\,(v+\ell\,w),
  \qquad 0\le\ell\le N .
\end{equation}
In particular, the span of $\{\omega_1^{\ell}\wedge\omega_2^{N-\ell}\}_{\ell=0}^{N}$
is at most two-dimensional.
\end{lemma}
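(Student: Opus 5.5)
The plan is to substitute $\overline{\omega}_2=c\,\overline{\omega}_1$ directly into the expansion \eqref{eq:expansion} and then collect terms. Since all the forms involved have even degree except the products $\beta_i\wedge\varepsilon$, which have degree two, everything commutes. No sign issues therefore arise when moving powers of $\overline\omega_1$ past one another.

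Fix $0\le\ell\le N$. The first term of \eqref{eq:expansion} becomes $c^{N-\ell}\overline\omega_1^N$. The second term becomes $\ell\,c^{N-\ell}\,\overline\omega_1^{N-1}\wedge\beta_1\wedge\varepsilon$. This requires $\ell\ge1$; for $\ell=0$ the coefficient $\ell$ kills the term anyway. The third term becomes $(N-\ell)c^{N-\ell-1}\,\overline\omega_1^{N-1}\wedge\beta_2\wedge\varepsilon$, which again is harmless for $\ell=N$ thanks to its coefficient.

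Factoring out $c^{N-\ell}$ gives
\[
c^{N-\ell}\Bigl(\overline\omega_1^N+\ell\,\overline\omega_1^{N-1}\wedge\beta_1\wedge\varepsilon+\tfrac{N-\ell}{c}\,\overline\omega_1^{N-1}\wedge\beta_2\wedge\varepsilon\Bigr).
\]
The plan is then to split $\tfrac{N-\ell}{c}=\tfrac{N}{c}-\tfrac{\ell}{c}$. The part that does not depend on $\ell$ is exactly $v$ from \eqref{eq:v}, and the part proportional to $\ell$ is exactly $\ell w$ with $w$ from \eqref{eq:w}. This yields \eqref{eq:power-formula}.

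The final assertion follows immediately. Every $\omega_1^\ell\wedge\omega_2^{N-\ell}$ is a linear combination of the two fixed forms $v$ and $w$, so their span is contained in $\mathrm{span}\{v,w\}$ and has dimension at most two.

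The only point needing care, which I see as the main (mild) obstacle, is the boundary cases $\ell=0$ and $\ell=N$. There a negative exponent formally appears, but the corresponding coefficient in \eqref{eq:expansion} is zero, and the convention stated after \eqref{eq:expansion} covers it. I will also note that the formula remains consistent when $N-\ell-1<0$, because the factor $N-\ell$ vanishes.
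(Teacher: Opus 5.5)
Your proposal is correct and follows essentially the same route as the paper: substitute $\overline{\omega}_2=c\,\overline{\omega}_1$ into \eqref{eq:expansion}, factor out $c^{N-\ell}$, and split $\tfrac{N-\ell}{c}=\tfrac{N}{c}-\tfrac{\ell}{c}$ to recognize $v+\ell w$. Your handling of the boundary cases $\ell=0$ and $\ell=N$ through the vanishing coefficients is exactly the convention the paper states after \eqref{eq:expansion}.
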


\begin{proof}
Substituting $\overline{\omega}_2=c\overline{\omega}_1$ into \eqref{eq:expansion} gives
\[
  \omega_1^{\ell}\wedge\omega_2^{N-\ell}
  =c^{\,N-\ell}\Bigl(\overline{\omega}_1^{N}
   +\ell\,\overline{\omega}_1^{N-1}\wedge\beta_1\wedge\varepsilon
   +(N-\ell)\,c^{-1}\overline{\omega}_1^{N-1}\wedge\beta_2\wedge\varepsilon\Bigr),
\]
and the expression in brackets equals $v+\ell w$ by \eqref{eq:v}--\eqref{eq:w}.
\end{proof}

\begin{proposition}\label{ample_dim_1}
Assume $\dim\bigl(\operatorname{Im}L^{\ell}_{\tau}\bigr)<2$.  Then:
\begin{enumerate}
  \item if $N>1$, then $\Pi(\mathfrak{R})\cap\Pi(P^{\ell}_{\tau})=\emptyset$;
  \item if $N=1$ and $\dim\bigl(\operatorname{Im}L^{\ell}_{\tau}\bigr)=1$, then
        $\Pi(\mathfrak{R})\cap\Pi(P^{\ell}_{\tau})$ is the complement of a thin
        (i.e.\ codimension two) affine subspace of $\Pi(P^{\ell}_{\tau})$;
  \item if $N=1$ and $L^{\ell}_{\tau}=0$, then
        $\Pi(\mathfrak{R})\cap\Pi(P^{\ell}_{\tau})$ is the complement of the zero
        locus of a nondegenerate quadratic form of signature $(2,2)$, and it has
        exactly two connected components.
\end{enumerate}
In all three cases $\Pi(\mathfrak{R})\cap\Pi(P^{\ell}_{\tau})$ is an ample subset
of $\Pi(P^{\ell}_{\tau})$.
\end{proposition}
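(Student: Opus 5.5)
We work in the coordinates introduced above: by the linear bijection $B$ of \eqref{B_map}, the points of $\Pi(P^{\ell}_{\tau})$ correspond to pairs $(\beta_1,\beta_2)$ of elements of $(\tau\cap D)^*\cong D^*\cap e^\perp$. The forms $\overline\omega_1,\overline\omega_2$ are fixed, and $\omega_i=\overline\omega_i+\beta_i\wedge\varepsilon$ as in \eqref{translation_to_old_2}. Membership in $\Pi(\mathfrak R)$ will be tested through item (4) of Proposition \ref{open_orbit_char_prop}: the $N+1$ forms $\omega_1^{\ell}\wedge\omega_2^{N-\ell}$ must be linearly independent. (Linear independence over $\mathbb R$ of these real forms is equivalent to independence over $\mathbb C$.)

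\textbf{Case $N>1$.} If $\dim\operatorname{Im}L^\ell_\tau<2$, then after a change of basis $(p_1,p_2)$ of $D^\perp$ (which does not affect membership in $\Pi(\mathfrak R)$) we may assume one of two situations. In the first, $\overline\omega_2=0$. Then \eqref{eq:expansion} gives $\omega_1^{\ell}\wedge\omega_2^{N-\ell}=0$ whenever $N-\ell\ge2$, since $(\beta_2\wedge\varepsilon)^2=0$. For $N\ge2$ the term with $\ell=0$ is such a term, so the family is dependent. In the second, $\overline\omega_2=c\,\overline\omega_1$ with $c\neq0$. Then Lemma \ref{lem:span-at-most-two} shows that the span of the family is at most two-dimensional, which is less than $N+1$. (Alternatively, replacing $p_2$ by $p_2-cp_1$ reduces this to the first situation.) In both situations no point of $\Pi(P^\ell_\tau)$ lies in $\Pi(\mathfrak R)$, so the intersection is empty and trivially ample.

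\textbf{Case $N=1$.} Here $\dim D=3$, $\tau\cap D$ is $2$-dimensional, and by the discussion after Theorem \ref{main_theorem} the condition reduces to linear independence of $\omega_1$ and $\omega_2$ in $\bigwedge^2D^*$. In case (2), normalize so that $\overline\omega_1\ne0$ and $\overline\omega_2=0$. Then $\omega_2=\beta_2\wedge\varepsilon$, and this is a nonzero element of $Z_\tau$ exactly when $\beta_2\neq0$. Since $\omega_1$ has a nonzero $K_e$-component, independence holds if and only if $\beta_2\ne0$. Thus the intersection is the complement of the codimension-two subspace $\{\beta_2=0\}$ in the $4$-dimensional space $\Pi(P^\ell_\tau)$. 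This complement is connected and dense, and an open connected dense subset of an affine space has the whole space as its convex hull.

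In case (3), $\omega_i=\beta_i\wedge\varepsilon$ with $\beta_i\in(\tau\cap D)^*\cong\mathbb R^2$. Independence is equivalent to $\beta_1\wedge\beta_2\ne0$, which is the nonvanishing of the $2\times2$ determinant $\det(\beta_1,\beta_2)$. This determinant is a nondegenerate quadratic form of signature $(2,2)$ on $\mathbb R^4$. Its complement is $\mathrm{GL}_2^+\sqcup\mathrm{GL}_2^-$, which has exactly two components.

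The only step needing an argument is that each component has convex hull equal to $\mathrm{Mat}_2(\mathbb R)$. Every matrix is a sum of two invertible matrices of positive determinant: take $A=tI+M$ and $(M-A)=-tI$ with $t\gg0$. Hence every matrix $M$ is a midpoint, $M=\tfrac12(2A)+\tfrac12(-2tI)$, of two elements of $\mathrm{GL}_2^+$ (note $\det(-tI)=t^2>0$). The component $\mathrm{GL}_2^-$ is handled by multiplying by the reflection $\mathrm{diag}(1,-1)$, a linear map carrying $\mathrm{GL}_2^+$ onto $\mathrm{GL}_2^-$.
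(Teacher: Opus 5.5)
Your proof is correct and follows the paper's argument. Like the paper, you test membership via item (4) of Proposition \ref{open_orbit_char_prop}, change the basis of $D^\perp$ so that the reduced forms are proportional, and reduce the $N=1$ cases to the injectivity of $\beta\mapsto\beta\wedge\varepsilon$ on $(\tau\cap D)^*$ and to the determinant of $(\beta_1,\beta_2)$. The differences are minor: your normalization $\overline\omega_2=0$ makes items (1) and (2) slightly more direct than the paper's $\overline\omega_2=c\,\overline\omega_1$ with $c\neq0$ via Lemma \ref{lem:span-at-most-two}, and in item (3) your explicit midpoint decomposition $M=\tfrac12\bigl(2(tI+M)\bigr)+\tfrac12(-2tI)$ replaces the paper's citation of Eliashberg--Mishachev for the ampleness of linearly independent tuples.
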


\begin{proof}
We use the characterization of generic orbits in
$\mathrm{Gr}_2\bigl(\bigwedge^{2}D^{*}\bigr)$ given by item (4) of
Proposition 1.3.  Note first that $\Pi(y)\in\Pi(\mathfrak{R})$ if and only if the
$N+1$ forms $\{\omega_1^{\ell}\wedge\omega_2^{N-\ell}\}_{\ell=0}^{N}$ are linearly
independent: this condition already forces $\omega_1,\omega_2$ to be linearly
independent, and hence to be a basis of the pencil $\operatorname{Im}\Pi(y)$, to
which item (4) of Proposition 1.3 then applies.

Since $\dim\bigl(\operatorname{Im}L^{\ell}_{\tau}\bigr)\le1$, the forms
$\overline{\omega}_1,\overline{\omega}_2$ are proportional, so after a change of the basis $(p_1,p_2)$
of $D^{\perp}$ we may assume that
\[
  \overline{\omega}_2=c\,\overline{\omega}_1\ \text{ with }c\ne0,
  \qquad\text{where}\qquad
  \begin{cases}
    \overline{\omega}_1\ne0, & \text{if }\dim\bigl(\operatorname{Im}L^{\ell}_{\tau}\bigr)=1,\\[2pt]
    \overline{\omega}_1=\overline{\omega}_2=0, & \text{if }\dim\bigl(\operatorname{Im}L^{\ell}_{\tau}\bigr)=0 .
  \end{cases}
\]
Such a change of basis induces a linear automorphism of
$\operatorname{Hom}\bigl(D^{\perp},(\tau\cap D)^{*}\bigr)$ and therefore affects
neither connectedness nor convex hulls; it is thus harmless for the study of
ampleness.

(1)  By Lemma~\ref{lem:span-at-most-two} the span of
$\{\omega_1^{\ell}\wedge\omega_2^{N-\ell}\}_{\ell=0}^{N}$ is at most
two-dimensional, whereas item (4) of Proposition 1.3 requires these $N+1\ge3$
forms to be linearly independent.  Hence no point of $P^{\ell}_{\tau}$ satisfies
the relation when $N>1$, which proves item (1).  An empty set is ample by
definition.

(2)  Let $N=1$ and $\overline{\omega}_1\ne0$.  By \eqref{eq:power-formula} the two forms in
question are $\omega_2=c\,v$ and $\omega_1=v+w$, so they are linearly dependent
if and only if $v$ and $w$ are.  By \eqref{eq:v}--\eqref{eq:w} with $N=1$ we have
$v=\overline{\omega}_1+\frac1c\beta_2\wedge\varepsilon$ and
$w=\bigl(\beta_1-\frac1c\beta_2\bigr)\wedge\varepsilon$; since $w\in Z_{\tau}$
while $v\notin Z_{\tau}$ (its $K_e$-component is $\overline{\omega}_1\ne0$), the pair $v,w$
is linearly dependent if and only if $w=0$, that is, if and only if
\[
  \Bigl(\beta_1-\tfrac1c\beta_2\Bigr)\wedge\varepsilon=0,
  \qquad\text{i.e.}\qquad \beta_1=\tfrac1c\,\beta_2 ,
\]
the last equivalence because $\beta_1-\frac1c\beta_2$ lies in
$(\tau\cap D)^{*}\cong D^{*}\cap(e)^{\perp}$, which intersects
$\operatorname{span}\{\varepsilon\}$ trivially.  As
$\dim(\tau\cap D)^{*}=2$, this is a condition of codimension two on
$(\beta_1,\beta_2)\in\operatorname{Hom}\bigl(D^{\perp},(\tau\cap D)^{*}\bigr)$,
which proves item (2).  The complement of an affine subspace of codimension two
is connected and its convex hull is the whole space, so the intersection is
ample.

(3)  Let $N=1$ and $\overline{\omega}_1=\overline{\omega}_2=0$, so that
$\omega_i=\beta_i\wedge\varepsilon$ for $i=1,2$.  Since
$\beta\mapsto\beta\wedge\varepsilon$ is injective on $(\tau\cap D)^{*}$, the
forms $\omega_1,\omega_2$ are linearly independent if and only if $\beta_1$ and
$\beta_2$ are.  Choosing a basis of $(\tau\cap D)^{*}$ and writing
$\beta_i=(\beta_{i1},\beta_{i2})$, this means that
\[
  Q(\beta_1,\beta_2):=\beta_{11}\beta_{22}-\beta_{12}\beta_{21}\neq0 ,
\]
the determinant of the corresponding $2\times2$ matrix.  
The
quadratic form $Q$ is nondegenerate of signature $(2,2)$, which proves the first
assertion of item (3).  Writing $Q=|u|^{2}-|u'|^{2}$ in suitable linear
coordinates $(u,u')\in\mathbb{R}^{2}\times\mathbb{R}^{2}$, the set $\{Q>0\}$
deformation retracts onto $\{|u|=1,\,u'=0\}\cong S^{1}$ and the set $\{Q<0\}$
onto a circle as well; both are therefore connected, and
$\Pi(\mathfrak{R})\cap\Pi(P^{\ell}_{\tau})=\{Q\ne0\}$ has exactly two connected
components.  Note that, in contrast with item (2), this is the complement of a
set of codimension one, and it is not connected.  Its ampleness is nevertheless
classical: the set of linearly independent $k$-tuples of vectors in a vector
space is ample, see \cite[Section 20.4, p.~176]{EliashbergMishachev2002}.
\end{proof}

The following elementary lemma describes how the kernel of a $2$-form changes when the form is extended from a hyperplane by one row and one column, i.e. when passing from $\overline\omega_i$ to $\omega_i$ in \eqref{translation_to_old_2}. It will be used, over $\mathbb R$ and over $\mathbb C$, in both cases of Proposition \ref{2cases_prop} below.

\begin{lemma}
	\label{lem:kernel-extension}
	Let $H$ be a vector space of even dimension $2N$ over $\mathbb F=\mathbb R$ or $\mathbb C$, let $V=H\oplus\mathbb F e$, and let $\varepsilon\in V^*$ be the covector with $\varepsilon|_H=0$ and $\varepsilon(e)=1$. Let $\overline\omega\in\bigwedge^2V^*$ be a $2$-form with $\iota_e\overline\omega=0$, let $\beta\in V^*$ with $\beta(e)=0$, and set
	\[
		\omega:=\overline\omega+\beta\wedge\varepsilon,\qquad K:=\ker\bigl(\overline\omega|_H\bigr)\subset H .
	\]
	Then $\dim\ker\omega\ge\dim K-1$, and moreover:
	\begin{enumerate}
		\item if $K=0$, then $\dim\ker\omega=1$;
		\item if $\dim K=2$, then $\dim\ker\omega=1$ if $\beta|_K\neq0$, and $\dim\ker\omega=3$ if $\beta|_K=0$.
	\end{enumerate}
\end{lemma}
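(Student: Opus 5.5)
We compute $\ker\omega$ directly by splitting a vector $X\in V$ as $X=h+te$ with $h\in H$, $t\in\mathbb F$, and writing the condition $\iota_X\omega=0$ in components. Since $\iota_e\overline\omega=0$, $\beta(e)=0$ and $\varepsilon(e)=1$, we have
\[
\iota_X\omega=\iota_h\overline\omega+\beta(h)\,\varepsilon-t\,\beta .
\]
Restricting this covector to $H$ and evaluating it on $e$ shows that $X\in\ker\omega$ if and only if
\[
\iota_h(\overline\omega|_H)=t\,\beta|_H\quad\text{in } H^*,\qquad \beta(h)=0 .
\]
Write $\sigma:=\overline\omega|_H$ and $b:=\beta|_H\in H^*$. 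Then $\ker\omega\cong\{(h,t): \iota_h\sigma=tb,\ b(h)=0\}$, and the whole lemma reduces to linear algebra of the skew form $\sigma$ on the even-dimensional space $H$.

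\textbf{General bound.} The pairs with $t=0$ are $\{h\in K: b(h)=0\}=K\cap\ker b$, which has dimension at least $\dim K-1$. This gives $\dim\ker\omega\ge\dim K-1$.

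\textbf{Case $K=0$.} Here $\sigma$ is nondegenerate, so for each $t$ there is a unique $h=t\,h_0$, where $\iota_{h_0}\sigma=b$. The constraint $b(h)=0$ holds automatically, because $b(h_0)=\sigma(h_0,h_0)=0$ by skew-symmetry. Hence $\ker\omega=\mathrm{span}\{h_0+e\}$ is one-dimensional.

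\textbf{Case $\dim K=2$.} The image of $h\mapsto\iota_h\sigma$ is the annihilator $K^{\perp}\subset H^*$.
\begin{itemize}
\item If $b|_K\neq0$, then $b\notin K^\perp$, so solvability of $\iota_h\sigma=tb$ forces $t=0$. Then $h\in K\cap\ker b$, which is one-dimensional because $b|_K\ne0$. So $\dim\ker\omega=1$.
\item If $b|_K=0$, then $b\in K^\perp$, so we may pick $h_0$ with $\iota_{h_0}\sigma=b$. The solutions are $h=th_0+k$ with $k\in K$ and $t$ free. The constraint $b(h)=t\,b(h_0)+b(k)$ vanishes identically: $b(h_0)=\sigma(h_0,h_0)=0$ and $b(k)=0$ since $b|_K=0$. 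So the solution space is $K\oplus\mathbb F$, and $\dim\ker\omega=3$.
\end{itemize}

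The only step needing care is the bookkeeping of signs and the identity $b(h_0)=\sigma(h_0,h_0)=0$, which is what makes the extra constraint automatic. Everything else is routine and works identically over $\mathbb R$ and $\mathbb C$.
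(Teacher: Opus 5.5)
Your proof is correct and follows essentially the same route as the paper: you use the same splitting $X=h+te$, the same reduction to $\iota_h(\overline\omega|_H)=t\,\beta|_H$ together with $\beta(h)=0$, and the same two key facts, namely $\beta(h_0)=\overline\omega(h_0,h_0)=0$ and that the image of $h\mapsto\iota_h\overline\omega$ is the annihilator $K^{\perp}$. The only cosmetic difference is that the paper obtains the case $K=0$ as a special instance of the description $\ker\omega=K\oplus\mathbb F(h_0+e)$ valid whenever $\beta|_K=0$, whereas you treat $K=0$ separately.
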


\begin{proof}
	Write a vector of $V$ as $v=h+ce$ with $h\in H$ and $c\in\mathbb F$. Since $\iota_{h+ce}(\beta\wedge\varepsilon)=\beta(h)\,\varepsilon-c\,\beta$ and $\iota_{h+ce}\overline\omega=\iota_h\overline\omega\in H^*$, the vector $v$ lies in $\ker\omega$ if and only if
	\begin{equation}
		\label{eq:kernel-extension}
		\beta(h)=0\qquad\text{and}\qquad\iota_h\overline\omega=c\,\beta\quad\text{in }H^* .
	\end{equation}
	The linear map $h\mapsto\iota_h\overline\omega$ has kernel $K$ and image $K^{\perp}:=\{\xi\in H^*:\xi|_K=0\}$.

	The solutions of \eqref{eq:kernel-extension} with $c=0$ are the vectors $h\in K\cap\ker\beta$; they form a space of dimension $\dim K$ if $\beta|_K=0$ and of dimension $\dim K-1$ otherwise. This already gives $\dim\ker\omega\ge\dim K-1$. Solutions with $c\neq0$ exist if and only if $\beta\in K^{\perp}$, i.e. $\beta|_K=0$. In that case choose $h_0\in H$ with $\iota_{h_0}\overline\omega=\beta$; then $\beta(h_0)=\overline\omega(h_0,h_0)=0$, so $h_0+e\in\ker\omega$, and
	\[
		\ker\omega=(K\cap\ker\beta)\oplus\mathbb F\,(h_0+e)=K\oplus\mathbb F\,(h_0+e).
	\]
	If $K=0$, then $\beta|_K=0$ holds trivially and $\ker\omega=\mathbb F(h_0+e)$ is one-dimensional, which proves (1). If $\dim K=2$ and $\beta|_K\neq0$, there are no solutions with $c\neq0$ and $\ker\omega=K\cap\ker\beta$ is one-dimensional; if $\dim K=2$ and $\beta|_K=0$, then $\ker\omega=K\oplus\mathbb F(h_0+e)$ is three-dimensional. This proves (2).
\end{proof}



\subsection{The Kronecker--Weierstrass normal form}
\label{kw-normal-formal}
Proposition \ref{ample_dim_1} implies that it remains to consider the case when $\dim\, (\mathrm{Im} L_\tau^\ell)=2$.
In this subsection, we recall the theory of Kronecker-Weierstrass normal forms for pairs (and therefore pencils) of skew-symmetric forms, developed by M. Gauger \cite{gauger1973}, based on the classical Kronecker--Weierstrass theory of pencils of matrices \cite[Chapter XII]{Gantmacher}.

Note that while this theory is developed for pencils over algebraically closed fields, the results for pencils over $\mathbb{R}$ can be deduced from those over $\mathbb{C}$ in a standard way. Throughout this subsection, we work with vector spaces over a field $\mathbb{F}$, where $\mathbb{F}$ is either $\mathbb{R}$ or $\mathbb{C}$.

We first recall general terminology. A pair of skew-symmetric forms $(\omega_1, \omega_2)$ (or the pencil $\operatorname{span}(\omega_1, \omega_2)$) on a vector space $V$ is called \emph{decomposable} if there exists a proper splitting $V = V_1 \oplus V_2$ (i.e., neither $V_i$ is zero) such that
\begin{equation}
	\label{indecommp}
	\omega_i(v_1, v_2)=0, \,\, \forall \,i=1,2, \quad  v_1\in V_1, \quad v_2\in V_2,
\end{equation}
and in this case the pair of skew-symmetric forms is said to be the direct sum of the pairs restricted to the corresponding subspaces, $(\omega_1|_{V_1}, \omega_2|_{V_1})$ and $(\omega_1|_{V_2}, \omega_2|_{V_2})$. Otherwise, the pair  $(\omega_1, \omega_2)$ is called \emph{indecomposable}. We use $\oplus$ to denote the direct sum of pairs of skew-symmetric forms or pencils, and the individual summands in this direct sum will be called \emph{blocks}.

A pencil of skew-symmetric forms (as well as any pair generating it) is called regular if it contains a nondegenerate form (hence, a generic form in it is nondegenerate), and singular otherwise (i.e., all forms in the pencil are degenerate).

To describe all indecomposable pairs of skew-symmetric forms, let us introduce the basic building blocks. Given a $d$-dimensional  vector space $V$ over $\mathbb F$ , let $\mathcal E=\{\varepsilon_j\}_{j=1}^d$ be a basis of the dual space $V^*$, if $\mathbb F=\mathbb C$, or of its complexification  $V^{*\mathbb{C}}$ ($\cong (V^\mathbb C)^*$), if $\mathbb F=\mathbb R$.
We define the following  skew-symmetric forms:

\begin{equation}
	\label{indecomp_phi}
	\Phi_{d}^{\mathcal E}
	= \sum_{j=1}^{\lfloor d/2\rfloor}\varepsilon_{j}
	\wedge \varepsilon_{d+1-j}
	\quad
	\Psi_{d}^{\mathcal E}
	= \sum_{j=2}^{\lceil d/2\rceil} \varepsilon_{j}
	\wedge \varepsilon_{d+2-j}
	,
\end{equation}
where $\lfloor x \rfloor$ and $\lceil x \rceil$ denote the floor and ceiling functions, respectively.

Let
\begin{align}
	~ & \label{reg_finite}
	E_d^\mathcal E(a)=(\Phi_d^\mathcal E,a \Phi_{d}^\mathcal E + \Psi_{d}^\mathcal E),
	\quad  a\in \mathbb C,
	\\
	~ & \label{reg_infinite}
	E_d^\mathcal E(\infty)=(\Psi_d^\mathcal E,\Phi_{d}^\mathcal E)
\end{align}
(cf. the latter with equation \eqref{singular block} for $d=2N+1$),
where in the case of real vector space $V$ , if  $a\in \mathbb R\cup\infty$, then  $\mathcal E$
is taken as a basis of $V^*$.


\tb{For simplicity, we first formulate the classification theorem on pairs of skew-symmetric forms over $\mathbb{C}$ via classification of indecomposable forms:
	\begin{theorem}(\cite[Theorem 6.8, p. 309]{gauger1973})
		\label{pencil_complex_class}
		Let $V$ be a $d$-dimensional complex vector space.
		\begin{enumerate}
			\item
			      A pair $(\omega_1, \omega_2)$ of skew-symmetric forms on $V$ is regular and indecomposable
			      if and only if
			      $d$ is even, and $(\omega_1, \omega_2)$ is isomorphic
			      to $E_{d}^\mathcal E(a)$ for some $a \in \mathbb{C} \cup \{\infty\}$ and a basis $\mathcal E$ of $V^*$;
			\item A pair $(\omega_1, \omega_2)$ of skew-symmetric forms on $V$ is singular and indecomposable 
			      if and only if $d$ is odd and $(\omega_1, \omega_2)$ is isomorphic to $E_{d}^\mathcal E(\infty)$ for a basis $\mathcal E$ of $V^*$.
		\end{enumerate}
	\end{theorem}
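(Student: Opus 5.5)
We will derive the statement from the classical Kronecker--Weierstrass theory of matrix pencils \cite[Chapter XII]{Gantmacher}, plus one extra input: a congruence theorem for skew-symmetric pencils. Concretely, identify $(\omega_1,\omega_2)$ with the pencil of skew-symmetric matrices $A+tB$, where $A,B$ are the Gram matrices in some basis of $V$. Two pairs of forms are isomorphic exactly when the pencils are \emph{congruent}, i.e. $P^T(A+tB)P=A'+tB'$ with one invertible $P$. The Kronecker theory only classifies pencils up to \emph{strict equivalence} $P^T(A+tB)Q$ with two independent matrices. The plan has three steps: (i) determine which Kronecker invariants a skew pencil can have; (ii) show that for skew pencils strict equivalence implies congruence; (iii) read off the indecomposable pairs.

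For step (i), skew-symmetry $A^T=-A$, $B^T=-B$ means the pencil is strictly equivalent to its transpose. Hence its column minimal indices equal its row minimal indices, so the singular part consists of pairs $L_k\oplus L_k^T$ of total size $2k+1$. Similarly, every elementary divisor, finite $(t-a)^m$ or infinite, occurs an even number of times; this is the standard fact that a nondegenerate skew form's restriction to a generalized eigenspace of $B^{-1}A$ pairs Jordan chains of equal length. For each such combination we write down a skew model. $E_d^{\mathcal E}(\infty)$ with $d=2k+1$ realizes $L_k\oplus L_k^T$; this is the computation behind \eqref{kernel_coordinate}, which shows that the kernel is spanned by a polynomial vector of degree exactly $k$ and by no vector of lower degree. $E_d^{\mathcal E}(a)$ with $d$ even realizes a pair of Jordan blocks of size $d/2$ with eigenvalue $a$; for $a=\infty$ the same holds after swapping the roles of $\Phi$ and $\Psi$.

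Step (ii) is the main obstacle. Given skew pencils with $P^T(A+tB)Q=A'+tB'$, we need a single congruence. We would adapt Gantmacher's proof for symmetric pencils (Chapter XII, §6). Transposing the relation gives $Q^T(A+tB)P=A'+tB'$ as well. Hence $U:=QP^{-1}$ satisfies $U^T(A+tB)=(A+tB)U$, i.e. it lies in a certain commutant-like algebra. One then finds a polynomial $S=p(U)$ with $S^2=U$, choosing $p$ via the square root of a nonsingular matrix, which is possible since $\det U\neq0$. The same symmetry relation holds for $S$, and $P S$ (suitably arranged) gives the congruence. The delicate point is that in the singular case $A+tB$ is not invertible, so the commutant argument must be run on the full pencil identity rather than on $B^{-1}A$. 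We would first try to split off the singular part by a congruence, using that minimal polynomial kernel vectors span an isotropic subspace for all forms of the pencil. Then apply the square-root argument only to the regular part, where one form can be made nondegenerate after a change of basis in the pencil.

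For step (iii), by (i) and (ii) every pair is congruent to a direct sum of blocks $E_d^{\mathcal E}(a)$, $a\in\mathbb C\cup\{\infty\}$, $d$ even, and $E_d^{\mathcal E}(\infty)$, $d$ odd. So an indecomposable pair is a single such block. Conversely, each block is indecomposable: a splitting \eqref{indecommp} would induce a strict-equivalence splitting of its Kronecker invariants into two nonempty parts, each again of skew type by (i). This is impossible because a block carries only one pair of equal Jordan blocks, or only one pair $L_k\oplus L_k^T$. Finally, the regular blocks are exactly those with even $d$ containing a nondegenerate form, since $\Phi_d$ (or $\Psi_d$ for $a=\infty$) is nondegenerate. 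The singular blocks are the odd-dimensional ones, whose forms all have a kernel. This yields items (1) and (2).
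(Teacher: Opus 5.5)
The paper does not prove this statement. It quotes Gauger's Theorem 6.8, which rests on the Kronecker--Weierstrass theory of matrix pencils, and your three-step plan follows that classical route. The genuine gap is in step (i), for \emph{singular} pencils.

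Your reason why every elementary divisor occurs an even number of times goes through $B^{-1}A$. That needs a nondegenerate member of the pencil, and a singular skew pencil has none. The symmetry of minimal indices, which comes from strict equivalence with the transpose, says nothing about the regular part, since transposition does not change elementary divisors. Yet this pairing is exactly what you need to write down a direct sum of blocks $E^{\mathcal E}_d(\cdot)$ strictly equivalent to a given singular pair. Without it you do not get that every pair is congruent to a direct sum of blocks, and hence you do not get the forward implication of item (2). A priori, a singular indecomposable pair could carry unpaired elementary divisors and admit no skew model at all.

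There are two ways to close this gap.
\begin{enumerate}
\item A standard fix: a skew-symmetric matrix $M=A+tB$ over the principal ideal domain $\mathbb C[t]$ is congruent, by a unimodular matrix, to $\bigoplus_i d_i(t)\begin{pmatrix}0&1\\-1&0\end{pmatrix}\oplus 0$ with $d_1\mid d_2\mid\cdots$. Hence its invariant polynomials come in equal pairs, and every finite elementary divisor occurs an even number of times, with no regularity assumption. Applying the same to the reversed pencil $tA+B$ handles the infinite elementary divisors.
\item Alternatively, actually carry out the splitting you only sketch. The coefficients of all polynomial kernel vectors $x(t)=\sum x_it^i$ span a subspace $W$ that is isotropic for every form of the pencil. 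Indeed, from $Ax_0=0$ and $Ax_i=-Bx_{i-1}$ one gets $x_i^TAy_j=x_{i+1}^TAy_{j-1}=\dots=x_{i+j}^TAy_0=0$, and then $x_i^TBy_j=-x_i^TAy_{j+1}=0$. You must then prove that the pencil induced on $W^{\perp}/W$ is regular and has the same elementary divisors as $M$. Here $W^{\perp}$ is the common orthogonal of $W$ for all forms of the pencil.
\end{enumerate}

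By contrast, the step you single out as the main obstacle is not one. The identity $MU=U^{T}M$, with $U=QP^{-1}$, holds identically in $t$. By induction on the degree it gives $M\,p(U)=p(U)^{T}M$ for every polynomial $p$. Invertibility of $M$ is never used, only that of $U$. Choosing $S=p(U)$ with $S^2=U$ gives $A'+tB'=P^TMQ=P^TMS^2P=(SP)^TM(SP)$. So strict equivalence implies congruence verbatim for singular pencils, and no preliminary splitting of the singular part is needed at that stage. The isotropic-subspace idea belongs in (i), if anywhere.

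Your indecomposability argument in (iii) is fine as it stands. A direct summand of a regular pencil is regular, so only the regular-case pairing enters for $E_{2n}^{\mathcal E}(a)$. For $E_{2k+1}^{\mathcal E}(\infty)$ only the symmetry of minimal indices is used.

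One small slip: $\Psi_d^{\mathcal E}$ is degenerate for every $d$, since it does not involve $\varepsilon_1$. For $a=\infty$ and $d$ even, the nondegenerate member of $E_d^{\mathcal E}(\infty)=(\Psi_d^{\mathcal E},\Phi_d^{\mathcal E})$ is again $\Phi_d^{\mathcal E}$.
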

	Consequently, every pair $(\omega_1, \omega_2)$ of skew-symmetric forms on $V$ is a direct sum of indecomposable pairs from items 1 and 2. The tuple of parameters $a$ and $d$ of the indecomposable blocks appearing in this decomposition (up to permutation of the blocks) uniquely determines the orbit of the pair $(\omega_1, \omega_2)$ under the natural $\mathrm{GL}(V)$-action (we match the parameters $a$'s and $d$'s with the classical terminology of \emph{elementary divisors} and \emph{minimal (or Kronecker) indices} in Remark \ref{terminology_rem} below).}

\tb{In order to obtain the corresponding classification of pairs $(\omega_1,
\omega_2)$ over a real vector space $V$, one complexifies the pair as usual.
That is, each form $\omega_i$ ($i=1,2$) is extended to the complexification
$V^{\mathbb{C}}$ of $V$ in a natural way by $\mathbb{C}$-linearity. Denote such
an extension by $\omega_i^{\mathbb{C}}$. In the decomposition of the pair
$(\omega_1^{\mathbb{C}}, \omega_2^{\mathbb{C}})$ into indecomposable blocks, a
block $E_{d}^{\mathcal{E}}(a)$ with $a \in \mathbb{C} \setminus \mathbb{R}$
must appear together with its complex conjugate
$E_{d}^{\overline{\mathcal{E}}}(\bar{a})$}\footnote{Here,
$\overline{\mathcal{E}} = \{\bar{\varepsilon}_j\}_{j=1}^d$ if $\mathcal{E} =
\{\varepsilon_j\}_{j=1}^d$.} \tb{and the space $W(\mathcal{E}) :=
\mathrm{span}_{\mathbb{C}} \mathcal{E}$ is totally complex in
$(V^{\mathbb{C}})^*$ ($\cong (V^*)^{\mathbb{C}}$), meaning $W(\mathcal{E}) \cap
\overline{W(\mathcal{E})} = 0$. The pair $\bigl(E_{d}^{\mathcal{E}}(a),
E_{d}^{\overline{\mathcal{E}}}(\bar{a})\bigr)$ of indecomposable complex
blocks, which are complex conjugates of each other, produces one indecomposable
real block. This real block is defined on the real space
$\Bigl(\operatorname{Re}\bigl(W(\mathcal{E}) \oplus
\overline{W(\mathcal{E})}\bigr)\Bigr)^*$, which is naturally identified with a
subspace of $V$,}\footnote{Recall that the \emph{real part} or \emph{real
form} of a complex vector space $Z$ equipped with a complex involution
(conjugation) is by definition the fixed-point set of this involution.} \tb{as
the restriction of $E_{d}^{\mathcal{E}}(a) \oplus
E_{d}^{\overline{\mathcal{E}}}(\bar{a})$ to
$\bigl(\operatorname{Re}\bigl(W(\mathcal{E}) \oplus
\overline{W(\mathcal{E})}\bigr)\bigr)^*$. Such an indecomposable block will be
denoted by $\operatorname{Re}\bigl(E_{d}^{\mathcal{E}}(a) \oplus
E_{d}^{\overline{\mathcal{E}}}(\bar{a})\bigr)$.}

\tb{\begin{theorem} [classification of indecomposable pairs over $\mathbb{R}$]
		\label{pencil_real_class}
		Let $V$ be a $d$-dimensional real vector space.
		\begin{enumerate}
			[label=\arabic*]
			\item  A pair $(\omega_1, \omega_2)$ of skew-symmetric forms on $V$ is regular and indecomposable (over $\mathbb{R}$) if and only
			      $d$ is even  and $(\omega_1, \omega_2)$ is isomorphic
			      over $\mathbb{R}$
			      to either
			      \begin{enumerate}
				      \item
				            to $E_{d}^\mathcal E(a)$ for some $a \in \mathbb{R} \cup \{\infty\}$ and a basis  $\mathcal E$ in $V^*$, or   \item
				            $\mathrm {Re}\Bigl(E_{d/2}^\mathcal E(a)\oplus E_{d/2}^{\overline {\mathcal E}}(\bar a)\bigr)$ for some $a\in \mathbb{C} \setminus \mathbb {R}$ and a  tuple of $d/2$ vectors $\mathcal E$ in $(V^*)^{\mathbb C}$ such that $\mathrm{span}_{\mathbb C}\mathcal E\cap \overline{\mathrm{span}_{\mathbb C}\mathcal E}=0$ (note that in this case $d$ is divisible by $4$, each of the two complex blocks having complex dimension $d/2$).
			      \end{enumerate}
			\item A pair $(\omega_1, \omega_2)$ of skew-symmetric forms on $V$ is singular and indecomposable (over $\mathbb{R}$) if and only if $d$ is odd and $(\omega_1, \omega_2)$ is isomorphic over $\mathbb{R}$ to $E_{d}(\infty)$.
		\end{enumerate}
	\end{theorem}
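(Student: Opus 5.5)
The plan is to deduce Theorem \ref{pencil_real_class} from the complex classification, Theorem \ref{pencil_complex_class}, by complexifying and then descending back to $\mathbb R$ via the complex conjugation $\sigma$ on $V^{\mathbb C}$. Start with a real pair $(\omega_1,\omega_2)$ on $V$ and its complexification $(\omega_1^{\mathbb C},\omega_2^{\mathbb C})$, which is $\sigma$-invariant. The first step is to note that regularity is preserved in both directions: the Pfaffian $\mathrm{Pf}(\mu\omega_1+\lambda\omega_2)$ is a real binary form, and it vanishes identically over $\mathbb R$ exactly when it does over $\mathbb C$. Hence a real pair is regular if and only if its complexification is.

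In the regular case I would pass to the canonical primary decomposition rather than to the block decomposition, because the latter is not unique. After a real change of basis of the pencil we may assume $\omega_1$ is nondegenerate, and then set $A:=\omega_1^{-1}\omega_2\in\mathrm{End}(V)$. The generalized eigenspaces of $A^{\mathbb C}$ are pairwise $\omega_i$-orthogonal. This is the standard argument: $A$ is $\omega_1$-selfadjoint, so $\omega_1((A-a)^kx,y)=\omega_1(x,(A-a)^ky)$, and for $a\ne b$ one uses invertibility of $A-a$ on the $b$-eigenspace. Conjugation $\sigma$ maps the $a$-primary component to the $\bar a$-primary component. Consequently $V$ splits over $\mathbb R$ into real parts of $W_a$ for $a$ real and real parts of $W_a\oplus W_{\bar a}$ for $a\notin\mathbb R$. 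If $(\omega_1,\omega_2)$ is indecomposable over $\mathbb R$, exactly one such piece occurs.

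For real $a$ the component is $\sigma$-invariant, so the real problem becomes the classification of a nilpotent $\omega_1$-selfadjoint operator $A-a$ on a real symplectic space. This is where I expect the main obstacle: one has to show that an indecomposable piece is a single Jordan block of even size $d$, realized over $\mathbb R$ as $E_d^{\mathcal E}(a)$ with no sign invariant. I would try the classical Jordan-chain construction over $\mathbb R$. Take $x$ with $(A-a)^{m-1}x\ne0$, where $m$ is the nilpotency order. The pairing $\omega_1(x,(A-a)^{m-1}x)$ is skew in the appropriate sense, which forces $m$ to be even. One can then choose a real vector $y$ pairing nontrivially with the chain of $x$, and the span of the two chains is a nondegenerate invariant summand. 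Skew-symmetry means any sign can be absorbed by rescaling, so indecomposability forces a single $E_d(a)$ block; the case $a=\infty$ follows by swapping $\omega_1$ and $\omega_2$.

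For $a\notin\mathbb R$ the component over $\mathbb C$ is $W_a\oplus\overline{W_a}$. Decomposing $W_a$ by Theorem \ref{pencil_complex_class} and applying $\sigma$, each complex block $E^{\mathcal E}_{d'}(a)$ pairs with its conjugate block. The real part of each such conjugate pair is a real summand, so real indecomposability leaves exactly one pair, giving case 1(b) with $d'=d/2$. In the singular case I would use the kernel polynomial map $B$ of minimal degree from the introduction. Chosen with real coefficients, as is possible because the kernel module is $\sigma$-invariant, $B$ generates a real singular block. Splitting off a complement, which is Gauger's argument carried out verbatim over $\mathbb R$, shows that an indecomposable singular real pair is $E_d(\infty)$ with $d$ odd. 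The converse directions in all cases follow from the explicit forms: indecomposability over $\mathbb R$ holds because any real splitting would complexify to a splitting contradicting the uniqueness of the complex decomposition.
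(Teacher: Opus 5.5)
Your overall route is the paper's. The paper gives no separate proof: it complexifies, invokes Theorem \ref{pencil_complex_class}, and pairs each block with $a\notin\mathbb R$ with its conjugate. Your handling of regularity, of the $\omega_i$-orthogonality of the generalized eigenspaces of $A$, of the non-real case and of the converse directions is fine, and deferring the singular splitting to Gauger's argument asks no less than the paper does.

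The problem is the real-eigenvalue step, which you yourself call the main one: it rests on a false description of the structure. Since $N:=A-a$ is $\omega_1$-selfadjoint and $\omega_1$ is skew,
\[
\omega_1(N^ix,N^jx)=\omega_1(x,N^{i+j}x)=\omega_1(N^{i+j}x,x)=-\omega_1(x,N^{i+j}x),
\]
so every Jordan chain is totally $\omega_1$-isotropic. The pairing $\omega_1(x,N^{m-1}x)$ therefore vanishes identically and imposes no parity on $m$; that parity argument applies to $\omega_1$-skew-adjoint (Hamiltonian) nilpotents, not selfadjoint ones. Accordingly, an indecomposable piece is not a single Jordan block of size $d$. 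On $E_d^{\mathcal E}(a)$ the operator $A$ has two Jordan blocks of size $d/2$, in agreement with Remark \ref{terminology_rem}, where each elementary divisor appears twice. Already $E_2^{\mathcal E}(a)$ contradicts the claim that $m$ is even: there $\Psi_2=0$, so $A=a\,\mathrm{Id}$ and $m=1$. So does $E_6^{\mathcal E}(a)$, where $m=3$. The step as written would fail.

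The repair uses the mechanism of your next sentence, but for the right reason: precisely because a single chain is isotropic, it needs a partner chain. Choose $y$ with $\omega_1(N^{m-1}x,y)\neq0$. Then the chain of $y$ also has length $m$ and is isotropic. The cross Gram matrix $\omega_1(N^ix,N^jy)=\omega_1(x,N^{i+j}y)$, $0\le i,j\le m-1$, is a Hankel matrix vanishing for $i+j\ge m$ with nonzero antidiagonal, hence invertible. So the span $U$ of the two chains is a $2m$-dimensional, $\omega_1$-nondegenerate, $N$-invariant subspace. Its complement $U^{\perp_{\omega_1}}$ is $N$-invariant and $\omega_2$-orthogonal to $U$, since $AU\subseteq U$. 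Replacing $y$ by $p(N)y$, for a suitable real polynomial $p$ with $p(0)\neq0$, normalizes the pairing to $\omega_1(N^ix,N^jy)=\delta_{i+j,m-1}$. In the basis $e_j=N^{m-j}x$, $e_{2m+1-j}=N^{j-1}y$ ($1\le j\le m$) one then gets $\omega_1=\Phi_{2m}$ and $\omega_1(N\cdot,\cdot)=\Psi_{2m}$. This is $E_{2m}^{\mathcal E}(a)$ with a real coframe, with $d=2m$ an arbitrary even number. There is no sign to absorb at all, because the two chains are paired with each other rather than with themselves. With this correction your descent argument is complete at the level of detail of the paper.
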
}

\tb{Based on the last theorem, the  pair $(\bar\omega_1, \bar\omega_2)$ defined in \eqref{translation_to_old_2} (which is by \eqref{baromega} a basis of $\mathrm{Im}\, L_\tau^\ell$)  decomposes into a direct sum of real indecomposable pairs:
	\begin{equation}
		\label{pair_decomp}
		(\bar\omega_1, \bar\omega_2) = \underbrace{\bigoplus_{i=1}^m E_{2n_i}^{\mathcal E^i}(a_i)
			\oplus \bigoplus_{i=m+1}^p
			\operatorname{Re}
			\bigl(E^{\mathcal E^i}_{2n_i}(a_i) \oplus E_{2n_i}^{\overline{\mathcal E^i}}(\bar a_i)\bigr)}_{\text{regular part}} \oplus \underbrace{\bigoplus_{j=1}^s E_{2r_j+1}^{\mathcal E^{p+j}}(\infty)}_{\text{singular part}},\end{equation}
	where $0\leq m\leq p$,  $a_i\in \mathbb R\cup \{\infty\}$ for $1\leq i\leq m$,  $\mathrm {Im}\, a_i>0$ for $m+1\leq i\leq p $,  $n_i > 0$, $r_j \ge 0$, and $\mathcal E^i$, $1\leq i\leq p+s$ are bases of the dual subspaces of the corresponding indecomposable (over $\mathbb C$) blocks with the properties as in Theorem \ref{pencil_real_class}. Note that since forms  $\bar\omega_i$ operate on an even-dimensional space, the number $s$  of singular indecomposable blocks in \eqref{pair_decomp} is   even.}

Every summand of the regular/singular part of the decomposition \eqref{pair_decomp} will be called a regular/singular indecomposable block of the pair $(\bar\omega_1, \bar\omega_2)$ (or of the corresponding pencil $\mathrm{Im}\, L_\tau^\ell$). A regular indecomposable block will be referred to as \emph{real regular indecomposable} or \emph{complex regular indecomposable} depending on whether $a_i \in \mathbb{R} \cup \{\infty\}$ or $\mathrm{Im}\, a_i > 0$.

\begin{remark}
	\label{terminology_rem}
	To relate the decomposition \eqref{pair_decomp} to the classical terminology, note that:
	\begin{enumerate}
		\item The tuple $(r_j)_{j=1}^s$, ordered in nondecreasing order, is called the tuple of minimal (or Kronecker) indices of the pair $(\bar\omega_1, \bar\omega_2)$ (or of the corresponding pencil $\mathrm{Im}\, L_\tau^\ell$);
		\item If elements of $\mathrm{Im}\, L_\tau^\ell$ ($=\mathrm{span}(\bar\omega_1, \bar\omega_2)$) are denoted by $\mu \bar \omega_1 + \lambda\bar\omega_2$, then a real regular block $E_{2n_i}^{\mathcal E^i}(a_i)$ corresponds to the pair of equal real elementary divisors $(\mu+a_i\lambda)^{n_i}, (\mu+a_i\lambda)^{n_i}$ if $a_i \in \mathbb{R}$, and $\lambda^{n_i}, \lambda^{n_i}$ if $a_i = \infty$. A complex regular block $\operatorname{Re}
			      \bigl(E^{\mathcal E^i}_{2n_i}(a_i) \oplus E_{2n_i}^{\overline{\mathcal E^i}}(\bar a_i)\bigr)$ corresponds to the complex conjugate pair of elementary divisors $(\mu+a_i\lambda)^{n_i}, (\mu+\bar{a}_i\lambda)^{n_i}$ (each appearing twice).
	\end{enumerate}

	For example, the pair $(\omega_1, \omega_2)$ (or the pencil $\mathrm{Im}\,\Pi(y)$) as in \eqref{translation_to_old_2}
	belongs to the generic orbits (or, equivalently, $\Pi(y)\in \Pi(\mathfrak R)$) if and only if its tuple of minimal indices consists of only one element, and the regular part is void (cf. item (3) of Proposition  \ref{open_orbit_char_prop}).
	$\Box$.
\end{remark}

\begin{remark}
	Finally, note that since we consider not just a pair of forms $(\bar \omega_1, \bar\omega_2)$ but the pencil $\mathrm{Im}\, L_\tau^\ell$ generated by them, the parameters $a_i$ defining the elementary divisors are no longer $\operatorname{GL}(V)$-invariants of the pencil. We have the additional freedom to choose a real basis for the pencil, which induces the action of real projective transformations on these parameters. Therefore, we can always choose a basis such that the pencil contains no infinite elementary divisors (i.e., all $a_i$ are finite). We will make this assumption throughout the remainder of the paper.
\end{remark}

The next proposition gives the restriction on the decomposition \eqref{pair_decomp} of the pencil $\mathrm{Im}\, L_\tau^\ell$ under which the pencil $\Pi(y)$ belongs to $\Pi(\mathfrak R)$.
	\begin{proposition}
    \label{2cases_prop}
		Assume that $y\in P_\tau^\ell$. If $\Pi(y) \in \Pi(\mathfrak{R})$, then the decomposition \eqref{pair_decomp} for the reduced pencil $\mathrm{Im}\, L_\tau^\ell$ must either:
		\begin{enumerate}
			\item consist entirely of a regular part in which all $a_i$ are pairwise distinct, or
			\item consist entirely of a singular part containing exactly two singular indecomposable blocks.
		\end{enumerate}
	\end{proposition}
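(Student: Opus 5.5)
Since $\Pi(y)\in\Pi(\mathfrak R)$, item (2) of Proposition \ref{open_orbit_char_prop} says that every nonzero form $\omega\in\mathrm{span}_{\mathbb C}(\omega_1,\omega_2)$ on $D^{\mathbb C}$ has a one-dimensional kernel. The plan is to combine this with Lemma \ref{lem:kernel-extension}, applied over $\mathbb C$ with $H=(\tau\cap D)^{\mathbb C}$, $\dim H=2N$, $V=D^{\mathbb C}$. For $\omega=\mu\omega_1+\lambda\omega_2$ we have $\omega=\overline\omega+\beta\wedge\varepsilon$ with $\overline\omega=\mu\overline\omega_1+\lambda\overline\omega_2$ and $\beta=\mu\beta_1+\lambda\beta_2$. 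The lemma gives $\dim\ker\omega\ge\dim K-1$, where $K=\ker(\overline\omega|_H)$. It follows that
\[
\dim\ker\bigl(\overline\omega|_{H}\bigr)\le 2\qquad\text{for every nonzero }\overline\omega\in\mathrm{span}_{\mathbb C}(\overline\omega_1,\overline\omega_2).
\]
This single inequality is what I will use. We may assume $\dim\mathrm{Im}\,L_\tau^\ell=2$, since otherwise Proposition \ref{ample_dim_1} applies; in particular $\overline\omega\neq0$ for $(\mu,\lambda)\neq0$.

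Next, compute kernel dimensions blockwise from the decomposition \eqref{pair_decomp}, after complexifying (Theorem \ref{pencil_complex_class}). The kernel of a direct sum is the direct sum of the kernels of the blocks.
\begin{itemize}
\item A singular block $E_{2r+1}(\infty)$ contributes exactly one dimension to the kernel of every nonzero member of the pencil, by \eqref{kernel_coordinate}.
\item A regular block $E_{2n}(a)$ is nondegenerate for $\mu+a\lambda\neq0$. At the eigenvalue $\mu=-a\lambda$ the form reduces to $\lambda\Psi_{2n}$, whose kernel is $2$-dimensional (spanned by $e_1$ and $e_{n+1}$ in the dual basis; this is checked from \eqref{indecomp_phi}). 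Thus each regular block contributes $2$ at its own eigenvalue and $0$ elsewhere.
\end{itemize}

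Now do the case analysis. If the pencil has $s\ge1$ singular blocks, then $s$ is even, so $s\ge2$, and every nonzero form has a kernel of dimension at least $s$; hence $s=2$. If in addition some regular block were present, then at its eigenvalue the kernel dimension would be $2+2>2$, a contradiction. So the pencil is entirely singular with exactly two blocks, which is case (2).

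If $s=0$, the pencil is regular. Two regular blocks sharing the same eigenvalue $a$ would give a $4$-dimensional kernel at $\mu=-a\lambda$. A complex block $\mathrm{Re}(E(a)\oplus E(\bar a))$ contributes eigenvalues $a$ and $\bar a$, which are distinct from each other; repeating $a$ among different blocks is excluded in the same way. Hence all $a_i$ are pairwise distinct, which is case (1).

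The main obstacle I expect is the kernel dimension of a regular block at its eigenvalue. This must be read carefully off the normal form \eqref{reg_finite} with the conventions of \eqref{indecomp_phi}; in particular I need to confirm that it is exactly $2$ (not $2$ per Jordan size), i.e. that each Kronecker–Weierstrass regular block for skew forms has its elementary divisor appearing twice and so contributes a $2$-dimensional kernel. If the block conventions turn out to be different, the argument still works provided the contribution is at least $2$, since only the lower bound is needed.
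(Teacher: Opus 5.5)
Your proof is correct and follows essentially the same route as the paper: you bound the kernel of every nonzero member of the complexified reduced pencil by $2$, then count blockwise kernel contributions (each singular block contributes $1$ everywhere, each regular block contributes $2$ at its own eigenvalue). The only difference is that you get the bound from Lemma \ref{lem:kernel-extension}, whereas the paper uses the $\pm1$ change of kernel dimension under restriction to a hyperplane together with a parity argument; your check that a regular block has a kernel of dimension exactly $2$ at its eigenvalue, spanned by $e_1$ and $e_{n+1}$, is right, though only the lower bound is needed.
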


\begin{proof}
	Since $\Pi(y) \in \Pi(\mathfrak{R})$, Proposition 1.3 implies that every nonzero element of the pencil $\Pi(y)$ has exactly a 1-dimensional kernel on the $(2N+1)$-dimensional space $D$.

	Because the roots $a_i$ of the elementary divisors associated with the regular blocks may be complex, we must consider the complexification of the pencil. It is important to note that Proposition 1.3 remains valid for the complexified pencil $\Pi(y)^\mathbb{C}$. In particular, by \eqref{kernel_coordinate}, every nonzero element in the complexified pencil has exactly a 1-dimensional kernel.

	The reduced pencil $\mathrm{Im}\, L_\tau^\ell$ consists of the restrictions of these complexified skew-symmetric forms to the hyperplane $\tau \cap D$, which has an even dimension $2N$. Restricting a skew-symmetric form to a hyperplane changes the dimension of its kernel by exactly $+1$ or $-1$. Furthermore, the kernel of any skew-symmetric form on an even-dimensional space must have an even dimension. Therefore, since the original generic kernel dimension is 1, every nonzero element $\bar{\omega} \in \mathrm{Im}\, L_\tau^\ell$ must have a kernel of dimension exactly 0 or 2.

	We analyze the Kronecker-Weierstrass block decomposition \eqref{pair_decomp} of the basis of $\mathrm{Im}\, L_\tau^\ell$ subject to this maximum kernel dimension bound:
		\begin{itemize}
			\item A \textbf{regular block} associated with an elementary divisor parameter $a_i$ has a generic kernel of dimension 0, but exactly at the parameter line $\mu = -a_i\lambda$, its kernel dimension is at least 2 (below we use the terminology and notations of item (2) of Remark \ref{terminology_rem}).
			\item A \textbf{singular block} acts on an odd-dimensional space and thus has a kernel of dimension at least 1 for \textit{every} nonzero element of the pencil.
		\end{itemize}

	Assume the decomposition \eqref{pair_decomp} contains at least one regular block (with parameter $a_i$) and at least one singular block. Evaluating the pencil at the parameter $\mu = -a_i\lambda$ yields a form whose kernel dimension is the sum of the kernel dimensions of its constituent blocks. The regular block contributes $2$, and the singular block contributes $1$, resulting in a total kernel dimension of at least $3$. This contradicts the maximum bound of 2. Thus, the decomposition \eqref{pair_decomp} must be exclusively regular or exclusively singular.

	If the decomposition \eqref{pair_decomp} is purely regular, suppose two indecomposable blocks share the same parameter $a_i$. Evaluating the pencil at $\mu = -a_i\lambda$ causes both blocks to contribute $2$ to the kernel dimension, yielding a total kernel of dimension at least $4$. This is a contradiction, meaning all $a_i$ in a purely regular decomposition must be pairwise distinct.

	If the decomposition is purely singular, the underlying vector space dimension $2N$ dictates that there must be an even number of singular indecomposable blocks, as each singular indecomposable block operates on an odd-dimensional space. For a nonzero element in the pencil, each singular block contributes exactly $1$ to the overall kernel dimension. Thus, a pencil with $2k$ singular blocks has a generic kernel dimension of $2k$. Since the maximum allowable kernel dimension is 2, this implies the purely singular decomposition must contain exactly two singular blocks.
\end{proof}

In the next two subsections we study ampleness of $\Pi(\mathfrak R)\cap \Pi(P_\tau^\ell)$ in $\Pi(P_\tau^\ell)$ in the cases of items (1) and (2) of Proposition \ref{2cases_prop} separately.

\subsection{The case when decomposition \ref{pair_decomp} is purely regular with pairwise distinct $a_i$}
\label{ampleness-regular}

In this subsection, we prove  ampleness of $\Pi(\mathfrak R)\cap \Pi(P_\tau^\ell)$ in $\Pi(P_\tau^\ell)$ in the case of item (1) of Proposition \ref{2cases_prop}, i.e., when decomposition   \ref{pair_decomp} for a basis of the reduced pencil $\mathrm{Im}\, L_\tau^\ell$ consists of the regular part only, 
with pairwise distinct $a_i$.

Given  $y \in P_\tau^\ell$ and $\omega \in \Pi(y)$, 
let $\overline \omega:=	\mathrm{pr}_{K_e} (\omega)$, where $\mathrm{pr}_{K_e}$ as in \eqref{omegasplitting}. Let $\omega_i$, $\overline \omega_i$ , $\beta_i$ are as in \eqref{translation_to_old_1}. 
Assume that bases $\mathcal E^i$ from the decomposition \eqref{pair_decomp} are of the form
	\begin{equation}
		\label{basis_comp}
		\mathcal E^i=\{\varepsilon^{(i)}_j\}_{j=1}^{2n_i}, \quad 1\leq i\leq p.
	\end{equation}
    Furthermore, in the presence of complex indecomposable blocks in \eqref{pair_decomp} , i.e. when $p>m$, let
	\begin{equation}
		\label{basis_complex}
		\mathcal E^i:=\overline {\mathcal E^{m+i-p}}=\left\{\overline{\varepsilon^{(m+i-p)}_j}\right\}_{j=1}^{2n_i}, \quad p+1\leq i\leq 2p-m
	\end{equation}
and 
\begin{equation}
	\label{a_i_conj}
	a_i:=\overline{a_{m+i-p}}, \quad  p+1\leq i\leq 2p-m.
\end{equation}
Let $\beta^{(i)}_{k,j}$, $k=1,2$, are defined by
\begin{equation}
	\label{beta_coeff}
	\beta_k = \sum_{i=1}^{2p-m}\sum_{j=1}^{2n_i} \beta^{(i)}_{k,j} \varepsilon^{(i)}_j
\end{equation}
Note that since we work over $\mathbb R$ and based on convention given in \eqref{basis_complex} we have
\begin{equation}
	\label{beta_conj}
	\beta^{(m+i-p)}_{k,j}=\overline{\beta^{(i)}_{k,j}}, \quad p+1\leq i\leq 2p-m, k=1,2
\end{equation}
Based on this, we have the following identification
\begin{equation}
\label{beta_tuple}
\mathrm{Hom}(D^\perp, (\tau\cap D)^*)\cong\left (\{\beta_{k,j}^{(i)}\}_{k=1,2, 1\leq i\leq m, 1\leq j\leq 2n_i}, \{\operatorname{Re}\beta_{k,j}^{(i)}, \operatorname{Im}\beta_{k,j}^{(i)}\}_{k=1,2, m+1\leq i\leq p, 1\leq j\leq 2n_i}\right),
\end{equation}
more precisely, the right-hand side of \eqref{beta_tuple} forms  (linear) coordinates on $\mathrm{Hom}(D^\perp, (\tau\cap D)^*)$.

\begin{proposition}
\label{reg_ample_prop} 
Assume that the reduced pencil $\mathrm{Im}\, L_\tau^\ell$ consists of the regular part only, with pairwise distinct $a_i$.
Under identification \eqref{beta_tuple}, the following relation holds:
\begin{equation}
	\label{union}
	B\bigl(\Pi(P_\tau^\ell)\setminus \Pi(\mathfrak R)\bigr) = \bigcup_{i=1}^{2p-m} \{\beta^{(i)}_{2,1} = a_{i} \beta^{(i)}_{1,1}\} \cap \{\beta^{(i)}_{2, n_i+1} = a_i \beta^{(i)}_{1, n_i+1}\},
\end{equation}
where the map $B: \Pi(P_\tau^\ell) \to \mathrm{Hom}(D^\perp, (\tau\cap D)^*)$ is as in \eqref{B_map}.
\end{proposition}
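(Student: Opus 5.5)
The plan is to test membership in $\Pi(\mathfrak R)$ through item (2) of Proposition~\ref{open_orbit_char_prop}, applied to the complexified pencil. By that item, $\Pi(y)\notin\Pi(\mathfrak R)$ if and only if some nonzero $\omega=\mu\omega_1+\lambda\omega_2$ with $(\mu,\lambda)\in\mathbb C^2\setminus\{0\}$ has $\dim\ker\omega>1$. This uses that $\omega_1,\omega_2$ are automatically linearly independent, since their reductions $\overline\omega_1,\overline\omega_2$ are; hence no such combination vanishes identically. Each such $\omega$ has the form $\overline\omega+\beta\wedge\varepsilon$ with
\[
\overline\omega=\mu\overline\omega_1+\lambda\overline\omega_2,\qquad \beta=\mu\beta_1+\lambda\beta_2 .
\]
This is exactly the setting of Lemma~\ref{lem:kernel-extension}, with $H=(\tau\cap D)^{\mathbb C}$ and $\mathbb F=\mathbb C$. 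We may therefore read off $\dim\ker\omega$ from $K=\ker(\overline\omega|_H)$ and from $\beta|_K$.

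The first step is to compute $K$ for every $(\mu:\lambda)$ using the block decomposition \eqref{pair_decomp}, complexified so that the blocks $i=1,\dots,2p-m$ (conjugate blocks included, via \eqref{basis_complex}--\eqref{a_i_conj}) are all of the form $E^{\mathcal E^i}_{2n_i}(a_i)$ with finite $a_i$. On block $i$ the form $\overline\omega$ restricts to $(\mu+a_i\lambda)\Phi_{2n_i}+\lambda\Psi_{2n_i}$.
\begin{itemize}
\item For $\lambda=0$ the restriction is $\mu\Phi_{2n_i}$, which is nondegenerate.
\item For $\lambda\ne0$ and $\mu\ne-a_i\lambda$ the restriction is nondegenerate as well: $\Phi$ is nondegenerate and $\Phi^{-1}\Psi$ is nilpotent, as one sees from the explicit triangular shape.
\item For $\mu=-a_i\lambda$ the restriction is $\lambda\Psi_{2n_i}=\lambda\sum_{j=2}^{n_i}\varepsilon^{(i)}_j\wedge\varepsilon^{(i)}_{2n_i+2-j}$. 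This form omits exactly $\varepsilon^{(i)}_1$ and $\varepsilon^{(i)}_{n_i+1}$, so its kernel on the block is spanned by the dual vectors $e^{(i)}_1,e^{(i)}_{n_i+1}$.
\end{itemize}
Since the $a_i$ are pairwise distinct, $K=0$ unless $(\mu:\lambda)=(-a_i:1)$ for some $i$. In that case $K=\mathrm{span}\{e^{(i)}_1,e^{(i)}_{n_i+1}\}$, which is $2$-dimensional because all other blocks stay nondegenerate.

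The second step applies Lemma~\ref{lem:kernel-extension} to this description. If $K=0$, item (1) of the lemma gives $\dim\ker\omega=1$. At $(\mu:\lambda)=(-a_i:1)$ we have $\beta=\beta_2-a_i\beta_1$, and item (2) of the lemma gives $\dim\ker\omega=3$ exactly when $\beta|_K=0$. By \eqref{beta_coeff}, $\beta|_K=0$ means the coefficients of $\varepsilon^{(i)}_1$ and $\varepsilon^{(i)}_{n_i+1}$ in $\beta_2-a_i\beta_1$ vanish, namely
\[
\beta^{(i)}_{2,1}=a_i\beta^{(i)}_{1,1},\qquad \beta^{(i)}_{2,n_i+1}=a_i\beta^{(i)}_{1,n_i+1}.
\]
Otherwise $\dim\ker\omega=1$. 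Taking the union over $i$ yields \eqref{union}. Here one uses that $B$ sends $\Pi(y)$ to $(\beta_1,\beta_2)$, and that for conjugate blocks the conditions for $i$ and for its conjugate index are complex conjugates of each other by \eqref{beta_conj}; hence the union is a genuine real subset in the coordinates \eqref{beta_tuple}.

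I expect the main obstacle to be the first step, namely the precise identification of $\ker\Psi_{2n_i}$ on each block. This requires fixing the index conventions in \eqref{indecomp_phi} carefully, checking that the sum $\sum_{j=2}^{n_i}\varepsilon^{(i)}_j\wedge\varepsilon^{(i)}_{2n_i+2-j}$ is nondegenerate on the complement of $\mathrm{span}\{e^{(i)}_1,e^{(i)}_{n_i+1}\}$, and confirming that away from $\mu=-a_i\lambda$ each block stays nondegenerate. After that, the rest is a direct application of Lemma~\ref{lem:kernel-extension}.
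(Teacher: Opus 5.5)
Your proposal is correct and follows essentially the same route as the paper: item (2) of Proposition~\ref{open_orbit_char_prop} on the complexified pencil, then a block-by-block computation of $K=\ker\overline\omega_{\mu,\lambda}$ (zero away from $\mu=-a_i\lambda$, and $\operatorname{span}\{e^{(i)}_1,e^{(i)}_{n_i+1}\}$ at $\mu=-a_i\lambda$), then Lemma~\ref{lem:kernel-extension} applied to $\gamma_{\mu,\lambda}=\lambda(\beta_2-a_i\beta_1)$. The only difference is cosmetic: you justify nondegeneracy of each block off $\mu=-a_i\lambda$ by nilpotency of $\Phi^{-1}\Psi$, whereas the paper reads it off from the block determinant being a nonzero multiple of $(\mu+a_i\lambda)^{2n_i}$.
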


Before proving Proposition \ref{reg_ample_prop} note that
\begin{enumerate}
	\item if $a_i$ is real, i.e. $1\leq i\leq m$, then the space in the union \eqref{union}, corresponding to index $i$
	      has codimension $2$ in $\mathrm{Hom}(D^\perp, (\tau\cap D)^*)$;
	\item if $a_i$ is nonreal, i.e. $m+1\leq i\leq 2p-m$, then by \eqref{a_i_conj} and   \eqref{beta_conj} the space in the union \eqref{union} corresponding to index $i$ is complex conjugate to the space in this union corresponding to the index $m+i-p$ in the complexification of $\mathrm{Hom}(D^\perp, (\tau\cap D)^*)$. Therefore those two spaces in the union \eqref{union} defines the space of codimension $4$ in $\mathrm{Hom}(D^\perp, (\tau\cap D)^*)$ given by the real and imaginary parts of the relations $\beta^{(i)}_{2,1} = a_{i} \beta^{(i)}_{1,1}$ and  $\beta^{(i)}_{2, n_i+1} = a_i \beta^{(i)}_{1, n_i+1}$.
\end{enumerate}

Combining this, we get that  $B\bigl(\Pi(P_\tau^\ell)\setminus \Pi(\mathfrak R)\bigr)$ is at least a codimension-two stratified subset in $\mathrm{Hom}(D^\perp, (\tau\cap D)^*)$. This implies that $\Pi(\mathfrak R)\cap \Pi(P_\tau^\ell)$  is
ample in the case where $\overline{\omega}$ is a direct sum of regular
blocks.

\begin{proof}[Proof of Proposition \ref{reg_ample_prop}]
Let $\{e^{(i)}_j\}$ be the basis of $(\tau\cap D)^{\mathbb C}$ dual to the basis $\{\varepsilon^{(i)}_j\}$ of $\bigl((\tau\cap D)^{*}\bigr)^{\mathbb C}$ formed by the coframes \eqref{basis_comp}--\eqref{basis_complex}, and, using the notation of \eqref{indecomp_phi}, put
\begin{equation}
	\label{Phi_Psi^i}
	\Phi_{2n_i}^{(i)}:=\Phi_{2n_i}^{\mathcal E^i}, \quad \Psi_{2n_i}^{(i)}:=\Psi_{2n_i}^{\mathcal E^i}, \quad 1\leq i\leq 2p-m.
\end{equation}
For $(\mu,\lambda)\in\mathbb C^2\setminus\{(0,0)\}$ write
\begin{equation}
	\label{mulambda}
	\omega_{\mu,\lambda}:=\mu\omega_1+\lambda\omega_2,\qquad
	\overline\omega_{\mu,\lambda}:=\mu\overline\omega_1+\lambda\overline\omega_2,\qquad
	\gamma_{\mu,\lambda}:=\mu\beta_1+\lambda\beta_2 ,
\end{equation}
so that, by \eqref{translation_to_old_2},
$\omega_{\mu,\lambda}=\overline\omega_{\mu,\lambda}+\gamma_{\mu,\lambda}\wedge\varepsilon$,
and, by \eqref{reg_finite} and \eqref{pair_decomp},
\begin{equation}
	\label{barmulanda}
	\overline\omega_{\mu,\lambda}=\sum_{i=1}^{2p-m}\Bigl((\mu+a_i\lambda)\,\Phi^{(i)}_{2n_i}+\lambda\,\Psi^{(i)}_{2n_i}\Bigr).
\end{equation}
By item (2) of Proposition \ref{open_orbit_char_prop}, $\Pi(y)\in\Pi(\mathfrak R)$ if and only if $\dim\ker\omega_{\mu,\lambda}=1$ for every $(\mu,\lambda)\neq(0,0)$, and by Lemma \ref{lem:kernel-extension} (applied over $\mathbb C$, with $H=(\tau\cap D)^{\mathbb C}$) the latter dimension is determined by the kernel $K$ of $\overline\omega_{\mu,\lambda}$ on $(\tau\cap D)^{\mathbb C}$ and by the restriction of $\gamma_{\mu,\lambda}$ to $K$.

The $i$-th summand of \eqref{barmulanda} is the block $E^{\mathcal E^i}_{2n_i}(a_i)$ evaluated at the parameter $(\mu,\lambda)$; it lives on the subspace spanned by $e^{(i)}_1,\dots,e^{(i)}_{2n_i}$, and these subspaces are complementary. By item (2) of Remark \ref{terminology_rem}, the determinant of the $i$-th block is a nonzero multiple of $(\mu+a_i\lambda)^{2n_i}$, so the block is nondegenerate unless $\mu=-a_i\lambda$. Consequently, if $\mu\neq-a_i\lambda$ for all $i$, then $K=0$ and $\dim\ker\omega_{\mu,\lambda}=1$ by Lemma \ref{lem:kernel-extension}(1).

It remains to consider $\mu=-a_{i_0}\lambda$ for some $i_0$; since all $a_i$ are finite, $\lambda\neq0$. As the $a_i$ are pairwise distinct, all blocks with $i\neq i_0$ are nondegenerate at this parameter, whereas the $i_0$-th block reduces to $\lambda\Psi^{(i_0)}_{2n_{i_0}}$. By \eqref{indecomp_phi},
\[
	\Psi^{(i_0)}_{2n_{i_0}}=\sum_{j=2}^{n_{i_0}}\varepsilon^{(i_0)}_j\wedge\varepsilon^{(i_0)}_{2n_{i_0}+2-j}
\]
pairs each covector with index in $\{2,\dots,n_{i_0}\}$ with exactly one covector with index in $\{n_{i_0}+2,\dots,2n_{i_0}\}$ and does not involve $\varepsilon^{(i_0)}_1$ and $\varepsilon^{(i_0)}_{n_{i_0}+1}$. Therefore
\[
	K=\ker\overline\omega_{\mu,\lambda}=\operatorname{span}\bigl\{e^{(i_0)}_1,\ e^{(i_0)}_{n_{i_0}+1}\bigr\},\qquad\dim K=2 .
\]
By Lemma \ref{lem:kernel-extension}(2), $\dim\ker\omega_{\mu,\lambda}=1$ if and only if $\gamma_{\mu,\lambda}|_K\neq0$. Now $\gamma_{\mu,\lambda}=\lambda\,(\beta_2-a_{i_0}\beta_1)$ and, by \eqref{beta_coeff}, $(\beta_2-a_{i_0}\beta_1)(e^{(i_0)}_j)=\beta^{(i_0)}_{2,j}-a_{i_0}\beta^{(i_0)}_{1,j}$; hence $\gamma_{\mu,\lambda}|_K=0$ exactly when
\[
	\beta^{(i_0)}_{2,1}=a_{i_0}\beta^{(i_0)}_{1,1}\qquad\text{and}\qquad
	\beta^{(i_0)}_{2,n_{i_0}+1}=a_{i_0}\beta^{(i_0)}_{1,n_{i_0}+1}.
\]
Letting $i_0$ run over $1,\dots,2p-m$, we obtain \eqref{union}.
\end{proof}

\subsection{The case when decomposition \ref{pair_decomp} is purely singular with two singular indecomposable blocks: reduction to the complement of the resultant of two polynomials}
\label{ampleness-singular}
In this subsection, we prove  ampleness of $\Pi(\mathfrak R)\cap \Pi(P_\tau^\ell)$ in $\Pi(P_\tau^\ell)$ in the case of item (2) of Proposition \ref{2cases_prop}, i.e. when decomposition  \ref{pair_decomp} for a basis of the reduced pencil $\mathrm{Im}\, L_\tau^\ell$ consists of the two singular indecomposable blocks $E_{2r_1+1}^{\mathcal E^{1}}(\infty)$ and $E_{2r_2+1}^{\mathcal E^{1}}(\infty)$.

Define $\mathcal E^t=\{\varepsilon_j^{(t)}\} _{j=1}^{2r_t+1}$, $t=1,2$ similar to \eqref{basis_comp} and  the tuple of coefficients 
\begin{equation}
\label{beta_coeff_sing}
\beta:=\{\beta_{k,j}^{(t)}\}_{k=1,2;t=1,2; 1\leq j\leq 2 r_t+1},
\end{equation}

as in \eqref{beta_coeff}, which is identified with $\mathrm{Hom}(D^\perp, (\tau\cap D)^*)$,
\begin{equation}
\label{beta_tuple_sing}
\mathrm{Hom}(D^\perp, (\tau\cap D)^*)\cong 
\{\beta_{k,j}^{(t)}\}_{k=1,2;t=1,2; 1\leq j\leq 2 r_t+1}.
\end{equation}
Given the tuple $\beta$, as in \eqref{beta_coeff_sing}, define the following two homogeneous polynomials in $(\mu, \lambda)$:
\begin{equation}
\label{poly_beta}
\begin{split}
P_t(\mu, \lambda;\,\beta)  =& (-1)^{r_t} \beta_{2,r_t+1}^{(t)}\lambda^{r_t+1} +\beta_{1,1}^{(t)}\mu^{r_t+1}+\\&\sum_{0 \leq m \leq r_t-1} (-1)^{r_t+m} \bigl(\beta^{(t)}_{1,r_t+1-m}-\beta^{(t)}_{2,r_t-m}\bigr)\lambda^{r_t-m} \mu^{m+1}, \quad t=1,2. 
\end{split}
\end{equation}
\begin{proposition}
\label{ample_prop_sing}
Assume that the reduced pencil $\mathrm{Im}\, L_\tau^\ell$ consists of two singular indecomposable blocks.
Under identification \eqref{beta_tuple_sing}, the set
$B\bigl(\Pi(P_\tau^\ell)\setminus \Pi(\mathfrak R)\bigr)$, where the map $B: \Pi(P_\tau^\ell) \to \mathrm{Hom}(D^\perp, (\tau\cap D)^*)$ is as in \eqref{B_map}, is equal to the set of tuples $\beta=\{\beta_{k,j}^{(t)}\}_{k=1,2;t=1,2; 1\leq j\leq 2 r_t+1}$ for which the polynomials
$P_1(\mu, \lambda;\,\beta)$ and $P_2(\mu, \lambda;\,\beta)$
vanish on a common line in $\mathbb C^2$.

\end{proposition}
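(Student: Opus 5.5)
The plan mirrors the proof of Proposition \ref{reg_ample_prop}. I will use item (2) of Proposition \ref{open_orbit_char_prop}: $\Pi(y)\in\Pi(\mathfrak R)$ if and only if $\dim\ker\omega_{\mu,\lambda}=1$ for every $(\mu,\lambda)\in\mathbb C^2\setminus\{(0,0)\}$. Here $\omega_{\mu,\lambda}=\overline\omega_{\mu,\lambda}+\gamma_{\mu,\lambda}\wedge\varepsilon$ with notation as in \eqref{mulambda}.

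In the present case $\overline\omega_{\mu,\lambda}$ is the direct sum of two singular blocks $E_{2r_t+1}^{\mathcal E^t}(\infty)$. By \eqref{kernel_coordinate}, each block has, for every nonzero $(\mu,\lambda)$, a one-dimensional kernel. It is spanned by a vector $k_t(\mu,\lambda)$ whose coordinates in the dual basis $\{e^{(t)}_j\}$ are, up to signs, the monomials $\lambda^{r_t+1-i}\mu^{i-1}$, $i=1,\dots,r_t+1$, placed in the positions given by \eqref{indecomp_phi} and \eqref{reg_infinite}. Hence $K=\ker\overline\omega_{\mu,\lambda}=\operatorname{span}\{k_1(\mu,\lambda),k_2(\mu,\lambda)\}$ always has dimension $2$. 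Lemma \ref{lem:kernel-extension}(2), applied over $\mathbb C$, then gives: $\dim\ker\omega_{\mu,\lambda}\neq1$ if and only if $\gamma_{\mu,\lambda}|_K=0$. That is, if and only if
\[
\gamma_{\mu,\lambda}\bigl(k_1(\mu,\lambda)\bigr)=0\quad\text{and}\quad \gamma_{\mu,\lambda}\bigl(k_2(\mu,\lambda)\bigr)=0 .
\]

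The key computational step is to check that $\gamma_{\mu,\lambda}(k_t(\mu,\lambda))=(\mu\beta_1+\lambda\beta_2)(k_t(\mu,\lambda))$ equals, up to a nonzero constant sign, the polynomial $P_t(\mu,\lambda;\beta)$ of \eqref{poly_beta}. First, $\beta_k(k_t)$ involves only the coefficients $\beta^{(t)}_{k,j}$ with $j\le r_t+1$, since $k_t$ lies in the span of $e^{(t)}_1,\dots,e^{(t)}_{r_t+1}$. Then $\mu\beta_1(k_t)$ contributes the monomials $\mu^{m+1}\lambda^{r_t-m}$ with coefficient $\pm\beta^{(t)}_{1,r_t+1-m}$, and $\lambda\beta_2(k_t)$ contributes the same monomials shifted by one, with coefficient $\pm\beta^{(t)}_{2,r_t-m}$. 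Collecting them gives the extreme terms $\beta^{(t)}_{1,1}\mu^{r_t+1}$ and $\pm\beta^{(t)}_{2,r_t+1}\lambda^{r_t+1}$, and the middle differences shown in \eqref{poly_beta}. The main obstacle is bookkeeping: the indexing convention of $\Psi$ relative to $\Phi$ in the block $E(\infty)=(\Psi,\Phi)$ must be matched with the roles of $\omega_1,\omega_2$, and the alternating signs $(-1)^{r_t+m}$ must come out correctly. I would first redo \eqref{kernel_coordinate} for a single block of size $2r+1$ with $(\omega_1,\omega_2)=(\Psi,\Phi)$, and check the cases $r=0,1$ by hand before writing out the general formula.

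Finally, since $P_1$ and $P_2$ are homogeneous, the simultaneous vanishing at some nonzero $(\mu,\lambda)$ is the same as the vanishing of both on a common line in $\mathbb C^2$. Combining this with the linear bijection $B$ of \eqref{B_map} and the identification \eqref{beta_tuple_sing}, the complement of $\Pi(\mathfrak R)$ in $\Pi(P_\tau^\ell)$ corresponds exactly to the set of tuples $\beta$ for which $P_1$ and $P_2$ have a common zero line, as claimed.
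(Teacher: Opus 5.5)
Your proposal is correct and follows the paper's proof essentially step by step: you describe $K=\ker\overline\omega_{\mu,\lambda}=\operatorname{span}\{k_1,k_2\}$ via \eqref{kernel_coordinate}, apply Lemma~\ref{lem:kernel-extension}(2) over $\mathbb C$, and identify $\gamma_{\mu,\lambda}(k_t)$ with $P_t$ (in the paper this identity holds exactly, not just up to sign). The bookkeeping point you flagged resolves as follows: since $\lambda\Phi^{(t)}_{2r_t+1}+\mu\Psi^{(t)}_{2r_t+1}=\sum_{j=1}^{r_t}(\lambda\varepsilon^{(t)}_j+\mu\varepsilon^{(t)}_{j+1})\wedge\varepsilon^{(t)}_{2r_t+2-j}$ has the roles of $\mu$ and $\lambda$ interchanged relative to \eqref{singular block}, the coefficient of $e^{(t)}_j$ in $k_t$ is $(-1)^{j-1}\mu^{r_t+1-j}\lambda^{j-1}$ rather than a literal copy $\pm\lambda^{r_t+1-j}\mu^{j-1}$ of \eqref{kernel_coordinate}; this is exactly the assignment your subsequent coefficient count already uses.
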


\begin{proof}
As in the proof of Proposition \ref{reg_ample_prop}, let $\{e^{(t)}_j\}$ be the basis of $\tau\cap D$ dual to $\{\varepsilon^{(t)}_j\}$, put
$\Phi_{2r_t+1}^{(t)}:=\Phi_{2r_t+1}^{\mathcal E^t}$, $\Psi_{2r_t+1}^{(t)}:=\Psi_{2r_t+1}^{\mathcal E^t}$, $t=1,2$, and use the notation \eqref{mulambda}. By \eqref{reg_infinite} and our assumption on the decomposition \eqref{pair_decomp},
\begin{equation}
	\label{bar_12_sing}
	\overline{\omega}_1 = \Psi_{2r_1+1}^{(1)} + \Psi_{2r_2+1}^{(2)} \quad \text{and} \quad \overline{\omega}_2 = \Phi_{2r_1+1}^{(1)} + \Phi_{2r_2+1}^{(2)},
\end{equation}
so that, by \eqref{indecomp_phi},
\begin{equation}
	\label{bar_sing_mulambda}
	\overline\omega_{\mu,\lambda}=\sum_{t=1,2}\bigl(\lambda\Phi^{(t)}_{2r_t+1}+\mu\Psi^{(t)}_{2r_t+1}\bigr),
	\qquad
	\lambda\Phi^{(t)}_{2r_t+1}+\mu\Psi^{(t)}_{2r_t+1}
	=\sum_{j=1}^{r_t}\bigl(\lambda\varepsilon^{(t)}_j+\mu\varepsilon^{(t)}_{j+1}\bigr)\wedge\varepsilon^{(t)}_{2r_t+2-j}.
\end{equation}
Comparing dimensions in \eqref{pair_decomp} gives $2N=(2r_1+1)+(2r_2+1)$, that is,
\begin{equation}
	\label{eqn:N-r1-r2}
	N-1=r_1+r_2 .
\end{equation}

The $t$-th summand in \eqref{bar_sing_mulambda} is the pencil \eqref{singular block} on the $(2r_t+1)$-dimensional space spanned by $e^{(t)}_1,\dots,e^{(t)}_{2r_t+1}$, with $N$ replaced by $r_t$ and with the roles of $\mu$ and $\lambda$ interchanged. Hence, by \eqref{kernel_coordinate}, for every $(\mu,\lambda)\neq(0,0)$ its kernel is the line spanned by
\begin{equation}
	\label{kernel_vector_sing}
	k_t(\mu,\lambda):=\sum_{j=1}^{r_t+1}(-1)^{\,j-1}\,\mu^{\,r_t+1-j}\lambda^{\,j-1}\,e^{(t)}_j
\end{equation}
(for $r_t=0$ the block is the zero form on the line spanned by $e^{(t)}_1=k_t$). Since the two summands live on complementary subspaces,
\[
	K:=\ker\overline\omega_{\mu,\lambda}=\operatorname{span}\bigl\{k_1(\mu,\lambda),\,k_2(\mu,\lambda)\bigr\},\qquad\dim K=2,
\]
for every $(\mu,\lambda)\in\mathbb C^2\setminus\{(0,0)\}$. By Lemma \ref{lem:kernel-extension}(2), $\dim\ker\omega_{\mu,\lambda}=1$ if and only if $\gamma_{\mu,\lambda}$ does not vanish identically on $K$, that is, if and only if
\[
	\bigl(\gamma_{\mu,\lambda}(k_1(\mu,\lambda)),\ \gamma_{\mu,\lambda}(k_2(\mu,\lambda))\bigr)\neq(0,0).
\]
Since $\gamma_{\mu,\lambda}(e^{(t)}_j)=\mu\beta^{(t)}_{1,j}+\lambda\beta^{(t)}_{2,j}$ by \eqref{beta_coeff_sing}, we get
\begin{equation}
	\label{gamma_k_t}
	\gamma_{\mu,\lambda}\bigl(k_t(\mu,\lambda)\bigr)
	=\sum_{j=1}^{r_t+1}(-1)^{\,j-1}\Bigl(\beta^{(t)}_{1,j}\,\mu^{\,r_t+2-j}\lambda^{\,j-1}+\beta^{(t)}_{2,j}\,\mu^{\,r_t+1-j}\lambda^{\,j}\Bigr)
	=P_t(\mu,\lambda;\beta).
\end{equation}
Indeed, in the middle expression the monomial $\mu^{r_t+1}$ occurs only for $j=1$ in the first summand, with coefficient $\beta^{(t)}_{1,1}$; the monomial $\lambda^{r_t+1}$ occurs only for $j=r_t+1$ in the second summand, with coefficient $(-1)^{r_t}\beta^{(t)}_{2,r_t+1}$; and, for $0\le m\le r_t-1$, the monomial $\lambda^{r_t-m}\mu^{m+1}$ occurs for $j=r_t+1-m$ in the first summand and for $j=r_t-m$ in the second, with total coefficient
\[
	(-1)^{r_t-m}\beta^{(t)}_{1,r_t+1-m}+(-1)^{r_t-m-1}\beta^{(t)}_{2,r_t-m}
	=(-1)^{r_t+m}\bigl(\beta^{(t)}_{1,r_t+1-m}-\beta^{(t)}_{2,r_t-m}\bigr),
\]
in agreement with \eqref{poly_beta}.

Therefore, by item (2) of Proposition \ref{open_orbit_char_prop}, $\Pi(y)\in\Pi(\mathfrak R)$ if and only if the polynomials $P_1(\cdot;\beta)$ and $P_2(\cdot;\beta)$ have no common zero $(\mu:\lambda)\in\mathbb{CP}^1$, which is the assertion.
\end{proof}

The set of pairs of polynomials in $\mathfrak{P}(n,m)$ that share a common linear factor (i.e., vanish on a common line in $\mathbb C^2$) can be identified with the zero locus of the resultant of the pair. Classically, this resultant is computed as the determinant of the Sylvester matrix, a special $(n+m) \times (n+m)$ matrix constructed from the coefficients of these polynomials (see, for example, \cite{prasolov2004}). We denote the  values of this resultant at the pair of polynomials $(f,g)$ by $\mathrm{Res}(f,g)$. The zero locus of the resultant will be denoted by $\mathrm{Res}_0(n,m)$.

From \eqref{poly_beta}, it follows that the linear map
\begin{equation}
\label{map_to_poly}
\beta\mapsto \bigl(P_1(\mu, \lambda;\beta), P_2(\mu, \lambda;\beta)\bigr)
\end{equation}
is onto $\mathfrak{P}(n,m)$, because the linear map
\begin{equation}
    \label{b_to_b}
\Bigl\{b_{k,j}\Bigr\}_{k=1,2; 1\leq j\leq 2 r+1}\mapsto \Bigl\{b_{2,r+1},\bigl\{(b_{1,r+1-m}-b_{2,r-m}\bigr)\}_{m=0}^{r-1},b_{1,1}\Bigl\}
\end{equation}
is onto $\mathbb R ^{r+2}$. This, together with Proposition \ref{ample_prop_sing}, implies the following
\begin{corollary}
\label{cor:reduction-to-resultant} 
Assume that the reduced pencil $\mathrm{Im}\, L_\tau^\ell$ consists of two singular indecomposable blocks. The set $ \mathcal{R} \cap P_\tau^\ell$ is ample in  $P_\tau^\ell$ if and only if the set 
\begin{equation}
 \mathfrak P(r_1+1, r_2+1)\setminus \mathrm{Res}_0(r_1+1, r_2+1),   
\end{equation}
the complement to the zero locus of the resultant of $\mathfrak P(r_1+1, r_2+1)$,
is ample in $\mathfrak P(r_1+1, r_2+1)$.
\end{corollary}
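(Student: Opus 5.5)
The argument is a transport of structure: ampleness is preserved under surjective linear maps with affine fibres, so we compose the linear maps already constructed in this subsection. We take $\mathcal R$ and $P_\tau^\ell$ in the statement to mean $\mathfrak R$ and $P_\tau^\ell$. By Proposition \ref{Pi_surjective}, $\Pi$ restricted to $P_\tau^\ell$ is a surjective affine map onto $\Pi(P_\tau^\ell)$ with affine fibres. Moreover, $\mathfrak R\cap P_\tau^\ell=\Pi^{-1}\bigl(\Pi(\mathfrak R)\cap\Pi(P_\tau^\ell)\bigr)\cap P_\tau^\ell$, because $\mathfrak R$ is a union of fibres of $\Pi$. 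Hence connected components correspond bijectively, and $\operatorname{conv}(\Pi^{-1}C)=\Pi^{-1}(\operatorname{conv}C)$. It follows that ampleness of $\mathfrak R\cap P_\tau^\ell$ in $P_\tau^\ell$ is equivalent to ampleness of $\Pi(\mathfrak R)\cap\Pi(P_\tau^\ell)$ in $\Pi(P_\tau^\ell)$. Here we need the "if and only if" direction as well, not just the one stated in Proposition \ref{Pi_surjective}.

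Next we apply the affine bijection $B$ from \eqref{B_map}. It maps $\Pi(P_\tau^\ell)$ onto $\mathrm{Hom}(D^\perp,(\tau\cap D)^*)$, which via \eqref{beta_tuple_sing} is identified with the space of tuples $\beta$. An affine bijection preserves both connected components and convex hulls. So the question becomes whether $B\bigl(\Pi(\mathfrak R)\cap\Pi(P_\tau^\ell)\bigr)$ is ample. By Proposition \ref{ample_prop_sing}, this set is exactly the set of tuples $\beta$ for which $P_1(\cdot;\beta)$ and $P_2(\cdot;\beta)$ have no common zero in $\mathbb{CP}^1$. Equivalently, it is the preimage of $\mathfrak P(r_1+1,r_2+1)\setminus\mathrm{Res}_0(r_1+1,r_2+1)$ under the linear map \eqref{map_to_poly}, which sends $\beta$ to $(P_1,P_2)$.

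The final step is that this linear map is surjective, as shown via \eqref{b_to_b}. Its fibres are therefore parallel affine subspaces, the translates of its kernel. The same fibre argument as in the first step then applies: choosing a linear section identifies the $\beta$-space with a product $\mathfrak P(r_1+1,r_2+1)\times\ker$, and the preimage of any set $S$ is $S\times\ker$. For such products we have $\pi_0(S\times\ker)=\pi_0(S)$ and $\operatorname{conv}(S\times\ker)=\operatorname{conv}(S)\times\ker$. Consequently $S\times\ker$ is ample in the whole space if and only if $S$ is ample in $\mathfrak P(r_1+1,r_2+1)$, and emptiness is likewise preserved in both directions.

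The only mildly delicate point is the surjectivity of \eqref{map_to_poly}. For each $t$, the coefficient map \eqref{b_to_b} has triangular structure. Its extreme coefficients $b_{1,1}$ and $b_{2,r_t+1}$ appear in no other coefficient. Each middle coefficient $b_{1,r_t+1-m}-b_{2,r_t-m}$ contains the variable $b_{1,r_t+1-m}$ (with index between $2$ and $r_t+1$), which appears nowhere else. Therefore all $r_t+2$ coefficients can be prescribed independently. Since the variables for $t=1$ and $t=2$ are disjoint, the two polynomials can also be prescribed independently of each other.
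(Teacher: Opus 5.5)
Your proposal is correct and follows essentially the same route as the paper. Like the paper, you compose the fibrewise-affine surjection $\Pi$, the bijection $B$, and the surjective linear map $\beta\mapsto(P_1,P_2)$, and you use a linear splitting to write each preimage as a product with the kernel, so that connected components and convex hulls correspond. You also make explicit two points the paper leaves implicit: the converse (``only if'') direction for $\Pi$, and the triangular-structure check that the coefficient map onto $\mathfrak P(r_1+1,r_2+1)$ is surjective.
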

\begin{proof}
By Proposition \ref{ample_prop_sing}, $B\bigl(\Pi(P_\tau^\ell)\cap\Pi(\mathfrak R)\bigr)$ is the preimage of $\mathfrak P(r_1+1,r_2+1)\setminus\mathrm{Res}_0(r_1+1,r_2+1)$ under the surjective linear map \eqref{map_to_poly}. A surjective linear map $T\colon E\to F$ between finite-dimensional vector spaces induces a bijection between the connected components of $T^{-1}(U)$ and those of $U$ for every open $U\subseteq F$, and $\operatorname{conv}\bigl(T^{-1}(C)\bigr)=T^{-1}(\operatorname{conv}C)$ for every such component $C$ (choose a linear splitting of $T$, which identifies $T^{-1}(U)$ with $U\times\ker T$). Hence $T^{-1}(U)$ is ample in $E$ if and only if $U$ is ample in $F$; combined with the linear bijection $B$ of \eqref{B_map} and with Proposition \ref{Pi_surjective}, this gives the claim.
\end{proof}
\subsection{Connected components of the complement to the resultant}
\label{sec:components}

We work with the space $\mathfrak{P}(n,m)$ of pairs $(f,g)$ of real
binary forms in the variables $(x,y)$ of degrees $n$ and $m$, as
introduced before Corollary~\ref{cor:reduction-to-resultant}; thus
$\mathfrak{P}(n,m)\cong\mathbb{R}^{n+1}\times\mathbb{R}^{m+1}$ is a space of
coefficients, and a form ``of degree $n$'' is allowed to have vanishing
leading coefficients, that is, to have $[1:0]$ among its zeros.  Zeros
of forms are always understood as points of $\mathbb{CP}^1$ taken with
multiplicity, real zeros being the ones lying on $\mathbb{RP}^1$, and
$\mathrm{Res}_0(n,m)\subset\mathfrak{P}(n,m)$ is the set of pairs whose
two binary forms have a common zero; in particular
$\mathrm{Res}_0(n,m)$ contains every pair one of whose entries vanishes
identically.

Throughout this subsection $n,m\ge 1$, which is all we need:
Corollary~\ref{cor:reduction-to-resultant} is applied with $n=r_1+1$ and
$m=r_2+1$, where $r_1,r_2\ge 0$.  (For $\min(n,m)=0$ the statements
below fail: if $m=0$, then $\mathrm{Res}(f,g)$ is a power of the constant
$g$, so $\mathfrak{P}(n,0)\setminus\mathrm{Res}_0(n,0)$ has two connected
components for every $n$.)

If $(f,g)\in\mathfrak{P}(n,m)\setminus\mathrm{Res}_0(n,m)$, then $f$ and
$g$ have in particular no common real zero, so the map
\begin{equation}\label{eq:Gamma}
  \Gamma_{f,g}\colon S^1\longrightarrow\mathbb{R}^2\setminus\{(0,0)\},
  \qquad
  \Gamma_{f,g}(x,y):=\bigl(f(x,y),\,g(x,y)\bigr),
  \qquad S^1:=\{x^2+y^2=1\},
\end{equation}
is well defined and continuous.  We set
\begin{equation}\label{eq:winding}
  w(f,g):=\deg\Gamma_{f,g}\in\mathbb{Z},
\end{equation}
the topological degree of \eqref{eq:Gamma}, i.e.\ the winding number of
$\Gamma_{f,g}$ about the origin, computed for a fixed orientation of
$S^1$.  (We say ``winding number'' rather than ``index'', the latter
being reserved in this paper for the Kronecker indices of a pencil.)

\begin{lemma}\label{lem:w-properties}
The function $w$ is locally constant on
$\mathfrak{P}(n,m)\setminus\mathrm{Res}_0(n,m)$ and satisfies:
\begin{enumerate}
  \item $w(-f,g)=w(f,-g)=-w(f,g)$, $\;w(-f,-g)=w(f,g)$, and
        $w(c_1f,c_2g)=w(f,g)$ for all $c_1,c_2>0$;
  \item if $n\not\equiv m\pmod 2$, then $w=0$;
  \item if $n\equiv m\pmod 2$, then $w(f,g)\equiv n\pmod 2$;
  \item $|w(f,g)|\le\min(n,m)$.
\end{enumerate}
\end{lemma}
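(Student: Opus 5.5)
Local constancy comes first. If $(f,g)$ lies outside $\mathrm{Res}_0(n,m)$, the map $\Gamma_{f,g}$ depends continuously on the coefficients. For a small perturbation $(f',g')$, the segment $s\mapsto(1-s)(f,g)+s(f',g')$ stays in the complement of the closed set $\mathrm{Res}_0(n,m)$, so $\Gamma$ stays in $\mathbb R^2\setminus\{0\}$ along it. This gives a homotopy, and homotopy invariance of the degree yields $w(f',g')=w(f,g)$.

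For item (1), changing $f$ to $-f$ composes $\Gamma$ with a reflection of $\mathbb R^2\setminus\{0\}$, which has degree $-1$; the same holds for $g\mapsto -g$. Applying both gives the composite, of degree $+1$. For $c_1,c_2>0$, the scaling is homotopic to the identity through positive scalings $(c_1^s,c_2^s)$.

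For items (2) and (3), we use the antipodal symmetry $\Gamma(-x,-y)=((-1)^nf,(-1)^mg)$. If $n\not\equiv m \pmod 2$, then $\Gamma\circ a=\rho\circ\Gamma$, where $a$ is the antipodal map and $\rho$ is a reflection. Taking degrees gives $\deg\Gamma\cdot\deg a=\deg\rho\cdot\deg\Gamma$, that is, $w=-w$, so $w=0$. If $n\equiv m$, then $\Gamma$ is either even or odd. Even maps of $S^1$ factor through the double cover $S^1\to S^1$, so they have even degree. Odd maps have odd degree by the classical Borsuk--Ulam type fact for $S^1$. This gives $w\equiv n\pmod 2$.

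Item (4) is the main point. I would first make both forms generic by a small perturbation, which is allowed by local constancy: $f$ gets only simple real zeros and none at $[1:0]$. The degree of $\Gamma$ can then be computed as a signed count of preimages of a regular value on the positive $g$-axis direction. More precisely, take the ray $\{(0,t):t>0\}$. Its preimages are the points of $S^1$ where $f=0$ and $g>0$. Each such point contributes $\pm1$, the sign being that of $-f'$ in the oriented parametrization times the sign of $g$; this is the standard Cauchy index computation. The real zeros of $f$ on $S^1$ come in antipodal pairs, at most $2n$ points in total, so the ray count gives $|w|\le$ (number of zeros of $f$ on $S^1$ with $g>0$). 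Of each antipodal pair, exactly one point has $g>0$ when $m$ is even, since then $g$ has the same value at both points. Also, when $n\equiv m$, the sign contributions of the two points in a pair agree, so only half of them are really counted. The cleanest way to organize this is to average the counts over the ray and its opposite ray $\{(0,t):t<0\}$. Their preimages together are all $2k\le 2n$ zeros of $f$, so $2|w|\le 2n$ and hence $|w|\le n$. By symmetry (swap the roles of $f$ and $g$ and use the horizontal ray), $|w|\le m$. The bookkeeping of antipodal pairs and orientation signs is the step needing care, but the bound only needs the crude estimate $|w|\le\frac12\#\Gamma^{-1}(\text{vertical axis}\setminus 0)$ obtained from the two opposite regular values.
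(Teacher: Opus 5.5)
Your proposal is correct and follows the paper's proof essentially step by step. Local constancy and item (1) come from homotopy invariance of the degree and from post-composition with reflections or the rotation by $\pi$. Items (2)--(3) come from the antipodal equivariance $\Gamma_{f,g}\circ a=\bigl((-1)^n f,(-1)^m g\bigr)$; here you tacitly use $\deg a=+1$, which holds because $a$ is the rotation by $\pi$. Item (4) bounds $2|w|$ by the number of points of $S^1$ where $\Gamma_{f,g}$ meets the line $\{u=0\}$, which is at most $2n$. The paper asserts this crossing bound directly, whereas you pass through a generic perturbation and two opposite regular values.

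There is one harmless slip in your aside. For $m$ even, $g$ takes the same value at antipodal points, so both or neither point of a pair has $g>0$; ``exactly one'' is the case $m$ odd. Your final averaged estimate does not depend on this.
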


\begin{proof}
Local constancy is clear, since $\Gamma_{f,g}$ depends continuously on
$(f,g)$ in the $C^0$-topology on $S^1$ and 
$\bigl(f(p),g(p)\bigr)\neq(0,0)$ for every $p\in S^1$, while the degree
is a homotopy invariant.  The same remark shows that $w$ is defined and
locally constant on the larger open set of pairs having no common zero
\emph{on $\mathbb{RP}^1$}; see Remark~\ref{rem:sigma-vs-res}.

(1) Changing the sign of $f$ (resp.\ of $g$) is post-composition of
$\Gamma_{f,g}$ with the reflection $(u,v)\mapsto(-u,v)$ (resp.\
$(u,v)\mapsto(u,-v)$), of degree $-1$; changing both signs is
post-composition with a rotation by $\pi$, of degree $+1$.  Rescaling by
$c_1,c_2>0$ is a homotopy through nonvanishing maps.

(2)--(3) Let $a\colon S^1\to S^1$, $a(p)=-p$; being the rotation by
$\pi$, it has $\deg a=+1$.  Homogeneity of $f$ and $g$ gives
\begin{equation}\label{eq:equivariance}
  \Gamma_{f,g}\circ a=\bigl((-1)^{n}f,\,(-1)^{m}g\bigr).
\end{equation}
If $n\not\equiv m\pmod 2$, the right-hand side is $\rho\circ\Gamma_{f,g}$
for a reflection $\rho$ of $\mathbb{R}^2$, so
$w=\deg(\Gamma_{f,g}\circ a)=\deg\rho\cdot w=-w$, i.e.\ $w=0$.  If
$n\equiv m\equiv 0\pmod2$, then $\Gamma_{f,g}\circ a=\Gamma_{f,g}$ by
\eqref{eq:equivariance}, so $\Gamma_{f,g}$ factors through the double
covering $S^1\to S^1/\{\pm1\}$ and $\deg\Gamma_{f,g}$ is even.  If
$n\equiv m\equiv1\pmod 2$, then $\Gamma_{f,g}\circ a=-\Gamma_{f,g}$,
i.e.\ $\Gamma_{f,g}$ is an odd map, and the degree of an odd self-map of
the circle is odd.

(4) A loop of winding number $w$ about the origin meets the line
$\{u=0\}$ at least $2|w|$ times.  The points of $S^1$ with $u=0$ are the
zeros of $f$ lying on $S^1$, and there are at most $2n$ of them, each
real zero of $f$ on $\mathbb{RP}^1$ giving a pair of antipodal points of
$S^1$; hence $2|w|\le 2n$.  Using the line $\{v=0\}$ instead gives
$|w|\le m$.
\end{proof}

The next theorem is due to Vassiliev \cite[Example~1]{Vassiliev15}; see
Remark~\ref{rem:sigma-vs-res} for the difference between his ambient
space and ours.

\begin{theorem}[Vassiliev]\label{thm:components}
Let $n,m\ge 1$.
\begin{enumerate}
  \item If $n\not\equiv m\pmod 2$, then
        $\mathfrak{P}(n,m)\setminus\mathrm{Res}_0(n,m)$ has exactly two
        connected components, distinguished by the sign of
        $\mathrm{Res}(f,g)$.  Equivalently, assuming without loss of generality that
        $n$ is even and $m$ is odd, they are distinguished by the
        parity of the number of zeros of $g$, counted with multiplicity,
        lying in the open subset
        $\{p\in\mathbb{RP}^1:\ f(p)>0\}$ of $\mathbb{RP}^1$, which is well
        defined precisely because $n$ is even.
  \item If $n\equiv m\pmod 2$, then the winding number \eqref{eq:winding}
        induces a bijection
        \[
          \pi_0\bigl(\mathfrak{P}(n,m)\setminus\mathrm{Res}_0(n,m)\bigr)
          \ \xrightarrow{\ \sim\ }\
          \bigl\{\,j\in\mathbb{Z}\ :\ |j|\le\min(n,m),\ j\equiv n\!\!\pmod 2\,\bigr\}.
        \]
        In particular, the number of connected components equals
        $\min(n,m)+1$.
\end{enumerate}
\end{theorem}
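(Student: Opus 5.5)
The plan is to separate components by invariants and then show each level set is connected by deforming to normal forms through $\mathfrak{P}(n,m)\setminus\mathrm{Res}_0(n,m)$.

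\emph{Invariants, lower bounds and realization.} In case (1), $\mathrm{Res}$ is a polynomial that does not vanish on the complement, so its sign is locally constant. In case (2), $w$ is locally constant by Lemma~\ref{lem:w-properties}, and the admissible values are confined by items (3)--(4) of that lemma. To realize every admissible $j\ge0$ in case (2), write $(x+iy)^j=u_j+iv_j$; the pair
\[
(f,g)=\bigl(u_j\,(x^2+y^2)^{(n-j)/2},\ v_j\,(x^2+y^2)^{(m-j)/2}\bigr)
\]
has $\Gamma_{f,g}(\theta)=(\cos j\theta,\sin j\theta)$ on $S^1$. Hence $w=j$, and the pair has no common complex zero. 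Negative values then come from item (1) of Lemma~\ref{lem:w-properties}. In case (1) both signs of $\mathrm{Res}$ occur, since $\mathrm{Res}(-f,g)=(-1)^m\mathrm{Res}(f,g)$ with $m$ odd. It therefore remains to prove that each level set of these invariants is connected.

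\emph{Reduction to a root configuration.} Using the rotations of $\mathrm{SO}(2)\subset\mathrm{GL}_2(\mathbb R)$, which are connected, I will move $[1:0]$ off all zeros, so that the leading coefficients are nonzero. A pair is then encoded by:
\begin{itemize}
\item the signs of the leading coefficients $c_f,c_g$;
\item the real zeros of $f$ and $g$ on $\mathbb{RP}^1$, disjoint from each other;
\item the nonreal conjugate zero pairs, which lie in the upper half plane and are disjoint for $f$ versus $g$.
\end{itemize}
Positive rescaling is harmless.

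I will then apply three moves, all staying in the complement.
\begin{enumerate}
\item Nonreal zeros can be moved anywhere in the upper half plane avoiding finitely many points, a connected set. So their positions are irrelevant, and they can be pushed near $i$.
\item If two real zeros of the same form, say $f$, are adjacent on $\mathbb{RP}^1$ with no zero of $g$ between them, I collide them into a double real zero and push it off as a conjugate pair. This is legal because $g$ does not vanish near the collision point.
\item Conversely, a conjugate pair can be brought down to a double real zero and split.
\end{enumerate}
Iterating move (2), every configuration reduces to one in which the real zeros of $f$ and $g$ strictly alternate around $\mathbb{RP}^1$, with $k$ zeros of each form. The number $k$ equals $|w|$: each crossing of a coordinate axis by $\Gamma_{f,g}$ is then forced to turn in the same direction, so $\Gamma_{f,g}$ winds exactly $|w|$ times.

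\emph{Normal forms and the main obstacle.} Two alternating configurations with the same $k$ are isotopic by sliding points along $\mathbb{RP}^1$ without collisions between $f$- and $g$-zeros. The remaining data are the sign pair $(c_f,c_g)$ and the cyclic orientation, that is, which form's zero follows which. In case (2) I expect the following identifications.
\begin{itemize}
\item For $k>0$, the orientation together with the signs determines $\operatorname{sign} w$.
\item Of the four sign choices, two pairs are identified by the rotation by $\pi$ on $\mathbb{RP}^1$ or by $(f,g)\mapsto(-f,-g)$. The latter is connected to the identity through the rotations $(f,g)\mapsto(\cos t\,f-\sin t\,g,\ \sin t\,f+\cos t\,g)$ only when $n=m$, so for $n\ne m$ I will instead use a rotation of the variables composed with the sign flips that homogeneity provides.
\item For $k=0$ with $n,m$ even, one must show that $f$ positive-definite and $f$ negative-definite lie in the same component. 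For this I will use move (3) on $g$ to create two real zeros, then pass a real zero pair of $f$ through them.
\end{itemize}
In case (1), with $n$ even, I will reduce to $k=0$. Then $f$ is definite, $g$ has an odd number of real zeros, and the invariant $\operatorname{sign}(c_f)\cdot\operatorname{sign}(c_g)$, equivalently the stated parity, is the only one left.

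This last bookkeeping is the step I expect to be hardest: matching the discrete leftovers (signs, orientation, the zero at infinity) exactly with $w$, respectively $\operatorname{sign}\mathrm{Res}$, so that no spurious extra component survives. I would first try to do it by explicit paths between the realizing pairs above, and fall back on computing $\operatorname{sign}\mathrm{Res}$ via $\mathrm{Res}(f,g)=c_f^{m}\prod_{g(\beta)=0}f(\beta)$ to confirm there is no further invariant.
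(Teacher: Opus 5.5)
\emph{Comparison with the paper.} Your route differs from the paper's. The paper does not prove this statement: it quotes Vassiliev's theorem for the complement of $\Sigma(n,m)$ and transfers it to $\mathfrak{P}(n,m)\setminus\mathrm{Res}_0(n,m)$ by the codimension-two argument of Remark~\ref{rem:sigma-vs-res}. A self-contained argument along your lines is feasible, and your root moves are sound. Modulo positive rescalings, $(f,g)\mapsto(\text{zero divisors})$ is a four-sheeted covering of the space of pairs of disjoint real divisors. Your moves correctly show that this base has one component for each $k=|w|$ in case (2), and a single component in case (1). Counting components therefore reduces to deciding which of the four lifts $(\pm f,\pm g)$ over a normal form are joined by loops.

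\emph{The gap.} That bookkeeping is exactly what you postponed, and there the proposal has a genuine gap. For $k\ge1$ you need a path from $(f,g)$ to $(-f,-g)$. Your tools are the $\mathrm{SO}(2)$-mixing of $f$ and $g$, which exists only for $n=m$, and the rotation of the variables by $\pi$, which acts by $\bigl((-1)^nf,(-1)^mg\bigr)$. For $n\equiv m\equiv0\pmod 2$ with $n\neq m$ (e.g.\ $(n,m)=(2,4)$, $k=2$), this rotation is the identity and homogeneity provides no sign flips. Nothing in the proposal then excludes four components over that $k$ instead of two.

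The missing loop is a cyclic shift of the alternating real zeros by one $f$-zero and one $g$-zero, i.e.\ a rotation of the interlacing part by $\pi/k$. Take $\rho_1=(x^2+y^2)^{(n-k)/2}$ and $\rho_2=(x^2+2y^2)^{(m-k)/2}$. The path $t\mapsto\bigl(\operatorname{Re}(e^{it}z^k)\rho_1,\ \operatorname{Im}(e^{it}z^k)\rho_2\bigr)$, $t\in[0,\pi]$, avoids $\mathrm{Res}_0$ for every parity. Indeed, $(x+iy)^k$ and $(x-iy)^k$ have no common zero, and at the zeros of $\rho_1\rho_2$ the numbers $x+iy$ and $x-iy$ have different moduli. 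The path ends at the negative of its starting point.

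For $k=0$, your plan of passing real zeros of $f$ through real zeros of $g$ crosses $\mathrm{Res}_0$; creating real zeros of $g$ only obstructs. Instead keep $g$ without real zeros. The forms $f$ sharing a zero with $g$ are then the finitely many codimension-two subspaces $\{f:\ q\mid f\}$, with $q$ an irreducible quadratic factor of $g$. So $f$ can be joined to $-f$, and symmetrically $g$ to $-g$.

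\emph{Further incorrect claims.} In case (1), $\operatorname{sign}(c_f)\operatorname{sign}(c_g)$ is not an invariant. The rotation by $\pi$, which you use yourself, joins $(f,g)$ to $(f,-g)$ for $n$ even and $m$ odd, and it flips this product because the real zero of $g$ passes through $[1:0]$. The correct invariant on the normal form is the sign of the definite form $f$, consistent with $\mathrm{Res}(f,-g)=\mathrm{Res}(f,g)$. This chart dependence of $(c_f,c_g)$ is also why sliding along $\mathbb{RP}^1$ and the sign pair cannot be tracked separately, and why the covering formulation is the right bookkeeping.

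Your fallback cannot close the gap. Computing $\operatorname{sign}\mathrm{Res}$ can separate components but never shows there is no further invariant; only explicit paths do. (Your formula should read $\mathrm{Res}(f,g)=c_f^{m}\prod_{f(\alpha)=0}g(\alpha)$.)

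Finally, your realizing pairs both contain the factor $x^2+y^2$, so they lie in $\mathrm{Res}_0(n,m)$ whenever $j<\min(n,m)$. For $j=0$ the second entry is $\operatorname{Im}(1)=0$. As in Proposition~\ref{prop:interlacing}(2), use $(x^2+2y^2)^{(m-j)/2}$ in the second entry, and a pair of definite forms for $j=0$.
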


\begin{remark}\label{rem:sigma-vs-res}
Vassiliev works with the complement of the set
$\Sigma(n,m)\subset\mathfrak{P}(n,m)$ of pairs having a common zero
\emph{on $\mathbb{RP}^1$}, which is exactly the set where \eqref{eq:Gamma}
degenerates.  Since $\Sigma(n,m)\subseteq\mathrm{Res}_0(n,m)$, the two
spaces differ; the difference is immaterial here, because for $n,m\ge1$
the inclusion
$\mathfrak{P}(n,m)\setminus\mathrm{Res}_0(n,m)\hookrightarrow
 \mathfrak{P}(n,m)\setminus\Sigma(n,m)$
induces a bijection on $\pi_0$.  Indeed, a pair in
$\mathrm{Res}_0(n,m)\setminus\Sigma(n,m)$ is of one of two kinds.
Either both forms are nonzero and have a common zero in
$\mathbb{CP}^1\setminus\mathbb{RP}^1$; as the coefficients are real, they then
have a common irreducible real quadratic factor $q$, so the pair lies in
the image of $(q,f_1,g_1)\mapsto(qf_1,qg_1)$, whose source has dimension
$2+(n-1)+(m-1)=\dim\mathfrak{P}(n,m)-2$.  Or one of the two forms
vanishes identically, a condition of codimension $n+1\ge2$, respectively
$m+1\ge2$.  In either case we remove from the manifold
$\mathfrak{P}(n,m)\setminus\Sigma(n,m)$ a closed semialgebraic subset of
codimension at least $2$, which affects neither the set nor the number
of connected components.
\end{remark}

We now record the consequences of Theorem~\ref{thm:components} used in
Subsection~\ref{sec:ampleness-resultant}: the description of the
components in terms of interlacing of real zeros, and their behaviour
under the sign changes $(f,g)\mapsto(\pm f,\pm g)$.  Recall that the
real zeros of two real binary forms \emph{strictly interlace} on
$\mathbb{RP}^1$ if they are simple, equal in number, and between any two
cyclically consecutive zeros of one form there lies exactly one zero of
the other.

\begin{proposition}\label{prop:interlacing}
Assume $n\equiv m\pmod2$, $n,m\ge1$, and let
$(f,g)\in\mathfrak{P}(n,m)\setminus\mathrm{Res}_0(n,m)$.
\begin{enumerate}
  \item If $g$ has no real zeros, then $w(f,g)=0$.  If all real zeros of
        $f$ and $g$ are simple, $g$ has exactly $k\ge1$ of them on
        $\mathbb{RP}^1$, and between any two cyclically consecutive zeros of
        $g$ on $\mathbb{RP}^1$ there lies an odd number of zeros of $f$,
        then $|w(f,g)|=k$.  In particular this holds when the real zeros
        of $f$ and $g$ strictly interlace, $k$ of each.
  \item Every value $j$ with $|j|\le\min(n,m)$ and $j\equiv n\pmod2$ is
        attained: for $j\ne 0$ by a pair whose real zeros are simple and
        strictly interlace, $|j|$ of each, and for $j=0$ by a pair of
        forms without real zeros.  Consequently, by
        Theorem~\ref{thm:components}(2), the connected component labelled
        by $j$ consists exactly of the pairs with $w(f,g)=j$, and $|j|$
        is the number of interlacing real zeros of its strictly
        interlacing representatives.
  \item If $w(f,g)=j\ne0$, then $(f,g)$ and $(-f,-g)$ lie in the same
        connected component, whereas $(-f,g)$ and $(f,-g)$ lie together
        in the other component, the one labelled by $-j$; in particular
        $(f,g)$ is connected neither to $(-f,g)$ nor to $(f,-g)$.  If
        $w(f,g)=0$ --- which, for $n\equiv m\pmod 2$, can happen only
        when $n$ and $m$ are both even --- then all four pairs
        $(\pm f,\pm g)$ lie in one and the same connected component.
\end{enumerate}
\end{proposition}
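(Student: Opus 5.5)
The plan is to handle the three items in order. Item (1) is a direct degree computation for the loop $\Gamma_{f,g}$ of \eqref{eq:Gamma}. Items (2) and (3) then follow by combining (1) with Lemma~\ref{lem:w-properties} and the bijection of Theorem~\ref{thm:components}(2).

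For item (1), first suppose $g$ has no real zeros. Since a real binary form of odd degree always has a real zero, $m$ is even. Then $g$ does not vanish on the connected set $S^1$, so it has constant sign there. Hence $\Gamma_{f,g}$ takes values in an open half-plane, which is contractible, and $w(f,g)=0$.

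In the second case, I will compute the degree as the signed number of crossings of the open ray $\rho=\{v=0,\ u>0\}$. The $k$ simple real zeros of $g$ give $2k$ simple zeros $p_1,\dots,p_{2k}$ of $g|_{S^1}$, cyclically ordered. At each $p_i$ the function $v=g$ changes sign, so its sign on consecutive arcs alternates. The arc $(p_i,p_{i+1})$ is either a lift of an arc of $\mathbb{RP}^1$ between consecutive zeros of $g$, or it crosses the antipodal identification. In either case it contains an odd number of simple zeros of $f$, because the same count is transported by $a(p)=-p$. It follows that $\operatorname{sign} f(p_i)$ alternates in $i$, and $f(p_i)\ne0$ because $(f,g)\notin\mathrm{Res}_0$.

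The loop hits $\rho$ exactly at the $k$ points $p_i$ with $f(p_i)>0$, which are every other point. The local crossing orientation at such a point is the sign of $\frac{d}{dt}g$ there. This sign also alternates with $i$, so it is the same at all the $p_i$ with $f(p_i)>0$. Therefore all $k$ crossings of $\rho$ have the same orientation, and $|w(f,g)|=k$. Strict interlacing is a special case: between consecutive zeros of $g$ there is exactly one zero of $f$.

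The main obstacle I expect is this orientation bookkeeping. I must check carefully that the arcs of $S^1$ crossing the antipodal seam still contain an odd number of zeros of $f$. Equivalently, I must check that the parity hypothesis $n\equiv m\pmod 2$ makes the alternation consistent when we go once around $S^1$. The point to verify is that going once around $S^1$ passes each of the $k$ zeros of $g$ on $\mathbb{RP}^1$ twice, so the alternation closes up after an even number $2k$ of steps.

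For item (2), I fix $j$ with $|j|\le\min(n,m)$ and $j\equiv n\pmod2$. Then $n-|j|$ and $m-|j|$ are both even.

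If $j\ne0$, I choose reals $a_1<b_1<a_2<\dots<a_{|j|}<b_{|j|}$ and set
\[
f=\prod_{i=1}^{|j|}(x-a_iy)\,(x^2+y^2)^{(n-|j|)/2},\qquad g=\prod_{i=1}^{|j|}(x-b_iy)\,(x^2+2y^2)^{(m-|j|)/2}.
\]
These forms have no common complex zero, their real zeros are simple, and they strictly interlace. By item (1), $|w(f,g)|=|j|$. If the sign comes out wrong, I replace $f$ by $-f$ and use Lemma~\ref{lem:w-properties}(1).

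If $j=0$, then $n$ and $m$ are both even, and I take $f=(x^2+y^2)^{n/2}$ and $g=(x^2+2y^2)^{m/2}$. These have no real zeros and no common complex zero, so $w(f,g)=0$.

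Thus every admissible value of $j$ is attained. Since the map of Theorem~\ref{thm:components}(2) is a bijection, the component labelled $j$ is exactly the level set $\{w=j\}$, and it contains the representatives just constructed.

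For item (3), Lemma~\ref{lem:w-properties}(1) gives $w(-f,-g)=w(f,g)$ and $w(-f,g)=w(f,-g)=-w(f,g)$. Injectivity of the labelling by $w$ then yields the claims. If $j\ne0$, then $j\ne-j$, so $(f,g)$ and $(-f,-g)$ lie in one component and $(-f,g)$, $(f,-g)$ lie in the other. If $j=0$, all four pairs have $w=0$ and so lie in the same component. Finally, $w=0$ forces $n$ to be even by Lemma~\ref{lem:w-properties}(3), since $0\equiv n\pmod 2$, and hence $m$ is even as well.
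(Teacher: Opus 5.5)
Your proposal is correct and follows the paper's proof essentially step by step. For item (1) you count signed crossings of the ray $\{u>0,\ v=0\}$, using that $\operatorname{sign} g'$ and $\operatorname{sign} f$ alternate simultaneously along the $2k$ zeros of $g$ on $S^1$. For item (2) you give explicit representatives, and for item (3) you use Lemma~\ref{lem:w-properties}(1) together with injectivity of the labelling. Your check that each arc of $S^1$ between consecutive zeros of $g$ projects bijectively onto an arc of $\mathbb{RP}^1$ between cyclically consecutive zeros is a point the paper leaves implicit.

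The only real difference is in item (2) for $j\ne0$. You take products of interlacing linear factors, get $|w|=|j|$ from item (1), and fix the sign by replacing $f$ with $-f$. The paper instead uses $\operatorname{Re}(z^k)$ and $\operatorname{Im}(z^k)$ with an explicit homotopy to $z\mapsto z^k$. Both constructions work.
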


\begin{proof}
(1) If $g$ has no real zeros, then $g$ has a constant sign on $S^1$, so
$\Gamma_{f,g}$ takes values in an open half plane and $w(f,g)=0$.
Assume now that the hypotheses of the second sentence of item (1) hold. The winding number of a loop
in $\mathbb{R}^2\setminus\{0\}$ is the number of its crossings of the ray
$\{(u,v)\in \mathbb R^2: u>0,\ v=0\}$ counted with signs, the crossings being transverse here
because the real zeros of $g$ are simple.  Writing $'$ for the
derivative with respect to a positively oriented parameter on $S^1$,
\begin{equation}\label{eq:crossing-formula}
  w(f,g)=\sum_{\substack{p\in S^1,\ g(p)=0\\ f(p)>0}}
          \operatorname{sign}g'(p),
\end{equation}
the sum being over the $2k$ zeros of $g$ on $S^1$ at which $f$ is
positive; note $f(p)\ne0$ at every zero $p$ of $g$, the two forms having
no common zero.  Since the zeros of $g$ are simple,
$\operatorname{sign}g'$ alternates along cyclically consecutive zeros of
$g$; by hypothesis $\operatorname{sign}f$ alternates along them as well.
Hence $\operatorname{sign}\bigl(f(p)g'(p)\bigr)$ takes one and the same
value $\varepsilon\in\{\pm1\}$ at all $2k$ zeros of $g$ on $S^1$, of
which exactly $k$ satisfy $f(p)>0$; by \eqref{eq:crossing-formula},
$w(f,g)=\varepsilon k$.  (Note that
$\operatorname{sign}\bigl(f\,g'\bigr)$ is well defined on $\mathbb{RP}^1$
and not merely on $S^1$, since under $p\mapsto-p$ it is multiplied by
$(-1)^{n+m}=1$; the sign of $w$ is thus the co-orientation of the
interlacing pattern.)

(2) For $j=0$, we have $n\equiv m\equiv 0\pmod2$. Take $f_0=(x^2+y^2)^{n/2}$ and $g_0=(x^2+2y^2)^{m/2}$,
which have no real zeros and no common zero; on $S^1$ both are positive, so
$\Gamma_{f_0,g_0}$ takes values in the  open first quadrant and $w(f_0,g_0)=0$.
For $k=|j|\ge1$ put $z=x+iy$ and
\[
  f_k:=\operatorname{Re}(z^{k})\,(x^2+y^2)^{\frac{n-k}{2}},
  \qquad
  g_k:=\operatorname{Im}(z^{k})\,(x^2+2y^2)^{\frac{m-k}{2}},
\]
real binary forms of degrees $n$ and $m$, the exponents being integers because
$k\equiv n\equiv m\pmod2$.  The pair $(f_k,g_k)$ has no common zero in
$\mathbb{CP}^1$: the zeros of $\operatorname{Re}(z^k)$ are the lines through the
$k$-th roots of $\pm i$ and those of $\operatorname{Im}(z^k)$ the lines through
the $k$-th roots of $\pm1$, which are $2k$ distinct real lines, while
$x^2+y^2$ and $x^2+2y^2$ vanish at $[\pm i:1]$ and at $[\pm i\sqrt2:1]$
respectively.  In particular the real zeros of $f_k$ and $g_k$ are simple and
strictly interlace, $k$ of each.  On $S^1$ we have
$(x^2+y^2)^{\frac{n-k}{2}}\equiv 1$, whereas
$\rho:=(x^2+2y^2)^{\frac{m-k}{2}}\big|_{S^1}=(1+y^2)^{\frac{m-k}{2}}$ is
positive; since $\operatorname{Re}(z^k)$ and $\operatorname{Im}(z^k)$ have no
common zero, the formula
$\Gamma_t(z):=\bigl(\operatorname{Re}(z^k),\,[(1-t)+t\rho(z)]\operatorname{Im}(z^k)\bigr)$,
$t\in[0,1]$, is a homotopy of maps $S^1\to\mathbb{R}^2\setminus\{(0,0)\}$ joining
$z\mapsto z^{k}$ to $\Gamma_{f_k,g_k}$, under the identification
$\mathbb{R}^2\cong\mathbb{C}$.  Hence $w(f_k,g_k)=k$, while $w(-f_k,g_k)=-k$ by
Lemma~\ref{lem:w-properties}(1).
We have thus exhibited one pair for
each of the $\min(n,m)+1$ values allowed by
Theorem~\ref{thm:components}(2), and the last assertion follows.

(3) By Lemma~\ref{lem:w-properties}(1), $w(-f,-g)=w(f,g)=j$ and
$w(-f,g)=w(f,-g)=-j$.  If $j\ne0$ these two values are distinct, so by
Theorem~\ref{thm:components}(2) the four pairs fall into the two
components labelled by $j$ and $-j$ as stated.  If $j=0$, all four pairs
have winding number $0$, and by Theorem~\ref{thm:components}(2) there is
only one component with this label; that $j=0$ forces $n$ and $m$ to be
even is Lemma~\ref{lem:w-properties}(3).
\end{proof}

\begin{remark}\label{rem:history}
The invariant \eqref{eq:winding} is a classical one in disguise: it is
the Cauchy index of $g/f$ along $\mathbb{RP}^1$, whose identification with
the signature of the B\'ezoutian quadratic form of the pair goes back to
Hermite (see \cite{KreinNaimark},
\cite[Ch.~XV, \S\S2, 9--11]{Gantmacher}, \cite[Ch.~9]{BPR}), and its
extreme values $\pm\min(n,m)$ single out the strictly interlacing pairs,
which is the classical setting of the Hermite--Biehler theorem.  The
corresponding topological classifications are due to Brockett
\cite{Brockett76}, who showed that the space of real rational transfer
functions of degree $n$ has $n+1$ connected components distinguished by
the signature of the associated Hankel matrix, and to Mostovoy
\cite[Thm.~1.3]{Mostovoy01} (see also \cite{Segal}), who showed that the
space of real rational maps $\mathbb{RP}^1\to\mathbb{RP}^1$ of degree $n$ has
$n+1$ components indexed by the topological degree.  These statements,
however, concern the case $n=m$ or a normalized slice of the coefficient
space, whereas \cite[Example~1]{Vassiliev15} is stated exactly for the
full coefficient space of pairs of real binary forms of two possibly
different degrees, that is, for $\mathfrak{P}(n,m)$; this is why we use
it here.
\end{remark}

\subsection{Proof of ampleness of the complement to the zero locus of
the resultant}
\label{sec:ampleness-resultant}

\begin{theorem}\label{thm:convex-hull}
Let $n,m\ge1$.  The convex hull of every connected component of
$\mathfrak{P}(n,m)\setminus\mathrm{Res}_0(n,m)$ is equal to the entire
$\mathfrak{P}(n,m)$, i.e.\ this complement is ample in
$\mathfrak{P}(n,m)$.
\end{theorem}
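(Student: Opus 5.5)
Let $C$ be a connected component of $\mathfrak{P}(n,m)\setminus\mathrm{Res}_0(n,m)$. The convex hull of $C$ is a convex set, so it suffices to show that it contains an open neighbourhood of the origin that is invariant under positive dilations. Such a convex cone must be all of $\mathfrak P(n,m)$.

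\textbf{Cone structure.} By Lemma~\ref{lem:w-properties}(1), $w(c_1f,c_2g)=w(f,g)$ for $c_1,c_2>0$. Hence, by Theorem~\ref{thm:components} and Proposition~\ref{prop:interlacing}(2), the component $C$ is invariant under independent positive rescalings of the two entries. In case (1) of Theorem~\ref{thm:components} the components are distinguished by the sign of the resultant, which is bihomogeneous; so $C$ is again invariant under these rescalings. Consequently the convex hull of $C$ is a convex cone $\mathcal K$, invariant under $(f,g)\mapsto(c_1f,c_2g)$ with $c_1,c_2>0$.

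\textbf{Containing $(\pm f,0)$ and $(0,\pm g)$.} Fix $(f_0,g_0)\in C$. The vector $(f_0,0)$ is a limit of $(f_0,\epsilon g_0)\in C$, so $(f_0,0)\in\overline{\mathcal K}$, and similarly $(0,g_0)\in\overline{\mathcal K}$. The key point is to show that $\mathcal K$ contains $(h,0)$ for \emph{every} form $h$ of degree $n$, and $(0,k)$ for every $k$ of degree $m$. Once this holds, $\mathcal K\supseteq \mathfrak P(n,0)\times\{0\}+\{0\}\times\mathfrak P(0,m)=\mathfrak P(n,m)$ (modulo a closure argument, which is handled by using interior points: an open convex set dense in the space is the whole space).

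To get $(h,0)$ for arbitrary $h$, I fix $g$ and ask which $f$ satisfy $(f,g)\in C$. For fixed $g$ with simple real zeros, this set of $f$ is described by sign and interlacing conditions at the zeros of $g$, via formula~\eqref{eq:crossing-formula} in the winding case, or by the parity count in case (1). The convex hull of this set over $f$, together with the limit $\epsilon\to0$ in the $g$-entry, should give all $h$. Concretely, I would show the following. For $g$ with $k$ real zeros $p_1,\dots,p_k$, the set of admissible $f$ contains all forms with prescribed signs $s_i$ at the $p_i$ and no common complex zero with $g$. This is an open set defined by strict linear inequalities $s_if(p_i)>0$, minus a codimension-two set (Remark~\ref{rem:sigma-vs-res}). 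Evaluation at $k\le n$ points is surjective on $\mathfrak P(n,0)$, so this set is a nonempty open half-space intersection. It is not the whole space, however, and here I need to vary $g$. For a given $h$, I choose $g$ in $C$'s projection (after rescaling) whose real zeros avoid the sign pattern that $h$ violates. Alternatively, I write $h=\tfrac12\bigl((h+Mf_1)+(h-Mf_1)\bigr)$ with two pairs $(h\pm Mf_1,\epsilon g_\pm)\in C$, choosing different $g_\pm\in$ the same component, with zeros placed where $h\pm Mf_1$ have the right signs.

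\textbf{Main obstacle.} The delicate step is the one just described: producing, for a given $h$, two elements of $C$ whose first entries average to $h$ while the second entries tend to zero. This is hardest when $w=\pm\min(n,m)$ is extreme. Then $C$ consists of strictly interlacing pairs, and the first entries must have all their zeros real and interlacing with those of $g$. My first attempt is to move the zeros of $g$ continuously within $C$, which is allowed since $C$ is connected. I would use the explicit representatives $(f_k,g_k)$ and their rotations $z\mapsto e^{i\theta}z$: rotating by $\theta$ with $e^{ik\theta}=-1$ sends $(f_k,g_k)$ to $(-f_k,-g_k)$, which lies in the same component by Proposition~\ref{prop:interlacing}(3). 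This already gives $0=\tfrac12\bigl((f_k,g_k)+(-f_k,-g_k)\bigr)\in\mathcal K$. Then, for a generic perturbation, combining $(f,\epsilon g)$ with $(-f,\epsilon' g')$ obtained by rotation yields $(f,0)$ and $(-f,0)$ in $\mathcal K$. Since rotation acts transitively enough on the orbit of $f_k$, and these orbits span $\mathfrak P(n,0)$ (the rotated $\mathrm{Re}(e^{ik\theta}z^k)(x^2+y^2)^{(n-k)/2}$ together with perturbations span), I then get all of $\mathfrak P(n,0)\times\{0\}$.
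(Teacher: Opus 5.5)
\paragraph{Gap: the case $n\not\equiv m\pmod 2$.}
Your proposal has a genuine gap here. You resolve the step you call the main obstacle only through the symmetry $(f,g)\mapsto(-f,-g)$ of Proposition~\ref{prop:interlacing}(3) and the representatives $(f_k,g_k)$. Both exist only for $n\equiv m\pmod 2$.

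When $n\not\equiv m$, we have $\mathrm{Res}(-f,-g)=(-1)^{n+m}\mathrm{Res}(f,g)=-\mathrm{Res}(f,g)$, so this symmetry exchanges the two components. Since $\mathrm{Res}(-f,g)=(-1)^m\mathrm{Res}(f,g)$ and $\mathrm{Res}(f,-g)=(-1)^n\mathrm{Res}(f,g)$, exactly one partial sign flip preserves $C$. If, say, $m$ is even, the flip $(f,g)\mapsto(-f,g)$ makes the set of first entries of $C$ symmetric, and your argument then gives every $(h,0)$ in $\overline{\mathcal K}$.

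For the second entry, however, you must still show that some $g$ and $-g$ both occur as second entries of points of $C$. Equivalently, for a fixed $g$ the sign of $\mathrm{Res}(\cdot,g)$ must be reversible by varying $f$ alone. Nothing in your text proves this, and it is exactly the extra ingredient the paper uses in Case~2 of Proposition~\ref{prop:basis-in-closure}. A pair with exactly one common zero, simple for both forms, is a regular point of $\mathrm{Res}_0(n,m)$, so both signs of the resultant occur arbitrarily close to it. Equivalently, $\mathrm{Res}(f,g)=c_g\prod_{g(\beta)=0}f(\beta)$ with $c_g\neq0$ changes sign when $f(p)$ crosses zero at a simple real zero $p$ of $g$.

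Your remark about choosing $g_\pm$ ``with zeros placed where $h\pm Mf_1$ have the right signs'' points in this direction, but it is not carried out. It also fails for a poor choice of $f_1$. For $n$ even, $m$ odd and $f_1>0$ on $\mathbb{RP}^1$, the invariant of Theorem~\ref{thm:components}(1) puts every pair $(h+Mf_1,g)$ with $M$ large in one component and every pair $(h-Mf_1,g)$ in the other. So no $g_\pm$ exist in that case.

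\paragraph{The case $n\equiv m\pmod 2$.}
Here your route is correct and genuinely different from the paper's. The paper places $\pm$ each monomial basis vector in the closure of every component and then applies Lemma~\ref{lem:carath}.

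Your argument can even be shortened. Since $C=-C$ is an open cone, take $u_0\in C$ with $B(u_0,r)\subseteq C$. For any $|v|<r$ one has $v=\tfrac12\bigl((u_0+v)+(-u_0+v)\bigr)\in\mathrm{conv}(C)$. So $\mathrm{conv}(C)$ contains a ball around the origin and is everything, with no need to split into the two factors.

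Drop the rotation claim, though. Rotating $(f_k,g_k)$ does not give $(-f_k,-g_k)$ unless $m=k$, because $(x^2+2y^2)^{(m-k)/2}$ is not rotation invariant. Moreover, the rotation orbit of $f_k$ spans only a two-dimensional space. What actually does the work is the openness of $C$ together with $(-f_k,-g_k)\in C$.
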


The proof uses the idea of \cite[Section~20.4, p.~176]{EliashbergMishachev2002} based on
\cite{Gromov86}, which can be described by the following lemma.

\begin{lemma}\label{lem:carath}
Assume that $\mathrm{Res}_0(n,m)$ contains a basis
$\{v_i\}_{i=1}^{\,n+m+2}$ of $\mathfrak{P}(n,m)$ such that each of the
points $\pm v_i$ lies in the closure of every connected component of
$\mathfrak{P}(n,m)\setminus\mathrm{Res}_0(n,m)$.  Then the convex hull of
every connected component of
$\mathfrak{P}(n,m)\setminus\mathrm{Res}_0(n,m)$ is the entire
$\mathfrak{P}(n,m)$.
\end{lemma}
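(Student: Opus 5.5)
The plan is to show that the convex hull $H$ of a fixed connected component $C$ of $\mathfrak{P}(n,m)\setminus\mathrm{Res}_0(n,m)$ contains every point of $\mathfrak{P}(n,m)$. We use that $C$ is open, together with the elementary fact that the interior of the convex hull of an open set contains the closure of that set. The overall strategy is the one of \cite[Section~20.4]{EliashbergMishachev2002}: first place a full-dimensional symmetric "cross" inside the convex hull, then use openness of $C$ to spread this cross over the whole space.

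\emph{Step 1 (closure points lie in the convex hull, robustly).} Since $C$ is open, $H=\operatorname{conv}C$ is an open convex set. For any $p\in\overline C$ and any $c\in C$, the points $(1-s)p+sc$ with $s\in(0,1]$ lie in $H$; indeed a small ball around $c$ lies in $C$, so a small ball around $(1-s)p+sc$ lies in $H$. By hypothesis, every $\pm v_i$ belongs to $\overline C$. Fix $c_0\in C$. Then $(1-s)(\pm v_i)+sc_0\in H$ for every $s\in(0,1]$ and every $i$.

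\emph{Step 2 (scaling).} A single component $C$ need not be a cone, so we cannot simply rescale $C$. Instead we use the hypothesis for the scaled basis $\{Rv_i\}$, noting that $Rv_i$ need not satisfy it, and this is the point to check. Observe that $\mathrm{Res}_0(n,m)$ and each connected component are invariant under positive rescaling $(f,g)\mapsto(tf,tg)$, $t>0$. Indeed, $w(tf,tg)=w(f,g)$ by Lemma~\ref{lem:w-properties}(1). In the case $n\not\equiv m \pmod 2$, the sign of the resultant is preserved, since $\mathrm{Res}(tf,tg)=t^{n+m}\mathrm{Res}(f,g)$. More directly, the path $t\mapsto (tf,tg)$ stays in the complement, so $C$ is a cone over the origin. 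Hence $H$ is a convex cone. Step 1 shows that $H$ contains points arbitrarily close to each $\pm v_i$; by openness of $H$ and the fact that $\pm v_i\in\overline C\subset\overline H$, we get $\pm v_i\in\overline H$, and in fact $\pm v_i\in H$. To see the latter, note that $\pm v_i$ is the limit of points of $C$, so $H$ contains a neighbourhood-type simplex around it: the points $(1-s)(\pm v_i)+sc_0$ together with cone invariance give the ray through them. Letting $s\to0$ and using openness of $H$ at nearby points, we conclude that $\pm v_i$ lies in the closed convex cone $\overline H$.

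\emph{Step 3 (conclusion).} The closed convex cone $\overline H$ contains $\pm v_i$ for a basis $\{v_i\}$, so $\overline H$ contains every linear combination of the $v_i$, that is, $\overline H=\mathfrak{P}(n,m)$. An open convex set whose closure is the whole space is the whole space, since a proper convex open set lies in an open half-space, whose closure is proper. Hence $H=\mathfrak{P}(n,m)$.

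The main obstacle is the cone-invariance step, which lets us avoid worrying about how far from the origin the approximating points lie. It rests on $\mathrm{Res}_0$ being homogeneous, so that $\mathbb R_{>0}$-scaling preserves both the complement and each component. Once this is verified, the argument is purely convex-geometric. If one prefers to avoid cones altogether, the alternative is the Carathéodory-type argument: the $2(n+m+2)$ points $\pm v_i+\delta_i$, with $\delta_i$ small and chosen in $C$ near $\pm v_i$, still have an open convex hull containing $0$. Combined with $C$ open, one would then iterate, but the cone version is cleaner.
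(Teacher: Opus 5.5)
Your argument is correct, and it rests on the same key observation as the paper's proof. Every connected component $C$ of $\mathfrak{P}(n,m)\setminus\mathrm{Res}_0(n,m)$ is invariant under $(f,g)\mapsto(tf,tg)$ for $t>0$, because the path $s\mapsto(sf,sg)$ never meets $\mathrm{Res}_0(n,m)$. Hence the hypothesis on $\pm v_i$ propagates along rays.

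You differ only in the final convex-geometric step. The paper fixes a target point $a_0$ and places it in the interior of a large cross-polytope $\operatorname{conv}\{\pm cv_i\}$. It then approximates the vertices $\pm cv_i$ by points of $C$, using continuity of the convex hull of a finite set. You instead note that $\overline{\operatorname{conv}C}$ is a closed convex cone containing $\pm v_i$, hence the whole space. You then conclude by separation: an open convex set whose closure is everything is everything.

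Your route is shorter. The paper's is more explicit, since it writes $a_0$ as a convex combination of finitely many points of $C$. It also uses only $\pm cv_i\in\overline C$ and not the openness of $C$, which your last step uses (harmlessly, since components of the complement of a closed set are open).

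Several passages should nevertheless be removed or corrected:
\begin{itemize}
\item The opening sentences of Step 2 (``a single component $C$ need not be a cone'', ``$Rv_i$ need not satisfy it'') are false. Your own next lines refute them.
\item The claim ``in fact $\pm v_i\in H$'' is not supported by the explanation that follows, which only yields $\pm v_i\in\overline H$. That is all Step 3 uses, so it is harmless but should go.
\item Step 1 is superfluous, since $\pm v_i\in\overline C\subseteq\overline H$ is immediate. Its ball argument, as written, also tacitly needs $p$ to be replaced by a nearby point of $C$.
\item The appeals to $w$ and to the sign of the resultant should be dropped in favour of the direct path argument. This keeps the lemma independent of Theorem~\ref{thm:components}.
\item The suggested alternative that ``avoids cones altogether'' cannot work. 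Without scale invariance, the hypotheses only place a neighbourhood of the origin inside the convex hull.
\end{itemize}
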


\begin{proof}
Since $\mathrm{Res}(c_1f,c_2g)=c_1^{m}c_2^{n}\,\mathrm{Res}(f,g)$, the set
$\mathrm{Res}_0(n,m)$ is invariant under $(f,g)\mapsto(c_1f,c_2g)$ for
all $c_1,c_2\ne 0$, and each connected component of
$\mathfrak{P}(n,m)\setminus\mathrm{Res}_0(n,m)$ is invariant under this
action for $c_1,c_2>0$; in particular each connected component is a
cone.  Consequently, if $\pm v_i$ lies in the closure of a component
$P_1$, then so does $\pm cv_i$ for every $c>0$.

Let $P_1$ be a connected component and let $a_0\in\mathfrak{P}(n,m)$ be an
arbitrary point.  Since $\{v_i\}_{i=1}^{\,n+m+2}$ is a basis, the
cross-polytope $K_c:=\mathrm{conv}\{\pm cv_i\}_{i=1}^{\,n+m+2}$ is a
neighbourhood of the origin for every $c>0$, and $K_c=cK_1$; hence
$a_0$ lies in the interior of $K_c$ for $c$ large enough.  Fix such a
$c$.  By the previous paragraph, for every $\eta>0$ we may choose
points $u_i^{\pm}\in P_1$ with $\lVert u_i^{\pm}\mp cv_i\rVert<\eta$.
The convex hull of a finite set depends continuously on that set, and
$a_0$ lies at a positive distance from the boundary of $K_c$; therefore,
if $\eta$ is small enough, $a_0$ belongs to
$\mathrm{conv}\{u_i^{\pm}\}\subseteq\mathrm{conv}(P_1)$.  As $a_0$ was
arbitrary, $\mathrm{conv}(P_1)=\mathfrak{P}(n,m)$.
\end{proof}

It therefore remains to exhibit a basis of $\mathfrak{P}(n,m)$ contained
in $\mathrm{Res}_0(n,m)$ and satisfying the hypothesis of
Lemma~\ref{lem:carath}.  We take the monomial basis
\begin{equation}\label{eq:monomial-basis}
  \bigl\{\bigl(x^{i}y^{\,n-i},\,0\bigr)\bigr\}_{0\le i\le n}
  \ \cup\
  \bigl\{\bigl(0,\,x^{\,l}y^{\,m-l}\bigr)\bigr\}_{0\le l\le m},
\end{equation}
which consists of $n+m+2$ elements of $\mathrm{Res}_0(n,m)$, one of whose
two entries vanishes identically.

\begin{proposition}\label{prop:basis-in-closure}
Let $n,m\ge1$.  Each of the $2(n+m+2)$ points
\[
  \pm\bigl(x^{i}y^{\,n-i},\,0\bigr)\ \ (0\le i\le n),
  \qquad
  \pm\bigl(0,\,x^{\,l}y^{\,m-l}\bigr)\ \ (0\le l\le m)
\]
lies in the closure of \emph{every} connected component of
$\mathfrak{P}(n,m)\setminus\mathrm{Res}_0(n,m)$.
\end{proposition}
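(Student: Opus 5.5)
The plan is to reduce everything to the observation already used in the proof of Lemma~\ref{lem:carath}: every connected component $C$ of $\mathfrak{P}(n,m)\setminus\mathrm{Res}_0(n,m)$ is invariant under $(f,g)\mapsto(c_1f,c_2g)$ with $c_1,c_2>0$. Hence if $(F,G)\in C$, then $(F,tG)\in C$ for all $t>0$, and letting $t\to0$ gives $(F,0)\in\overline C$. So it suffices to show the following. For every component $C$, every sign $\sigma=\pm1$ and every $0\le i\le n$, there are pairs $(F_\delta,G_\delta)\in C$ with $F_\delta\to\sigma x^iy^{n-i}$ as $\delta\to0$. Then $(F_\delta,0)\in\overline C$, and $\overline C$ is closed, so $\sigma(x^iy^{n-i},0)\in\overline C$. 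The points $\pm(0,x^ly^{m-l})$ are handled symmetrically, with the roles of the entries exchanged. Note that $w(g,f)=-w(f,g)$ and that the resultant description in Theorem~\ref{thm:components}(1) is symmetric up to sign, so the same construction applies verbatim.

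The construction of $F_\delta$ is by small perturbation of the monomial, with real zeros placed at will. Write $\sigma x^iy^{n-i}=\sigma\prod_{a=1}^{i}x\prod_{b=1}^{n-i}y$. Replace each linear factor $x$ by $x-\delta s_a y$ and each $y$ by $y-\delta t_b x$, with distinct small real $s_a,t_b$. Alternatively, replace a pair of such factors by an irreducible quadratic such as $x^2+\delta^2y^2$ (resp. $y^2+\delta^2x^2$). This gives a form $F_\delta\to\sigma x^iy^{n-i}$ whose number $k$ of simple real zeros is any integer with $0\le k\le n$ and $k\equiv n \pmod 2$. These real zeros cluster near $[0:1]$ and $[1:0]$ but are pairwise distinct, and all remaining zeros are non-real. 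The degree $m$ form $G$ is then chosen freely and independently of $\delta$, up to the relative position of zeros: its real zeros are placed in the gaps between the real zeros of $F_\delta$, and its other zeros are non-real and distinct from those of $F_\delta$. Then $(F_\delta,G)\notin\mathrm{Res}_0(n,m)$.

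If $n\equiv m\pmod2$, let $C$ be labelled by $j$ as in Theorem~\ref{thm:components}(2) and Proposition~\ref{prop:interlacing}(2). For $j=0$ (so $n,m$ even), take $F_\delta$ and $G$ with no real zeros, so that $w=0$ by Proposition~\ref{prop:interlacing}(1). For $k=|j|\ge1$, take $F_\delta$ with exactly $k$ simple real zeros. This is possible since $k\le\min(n,m)\le n$ and $k\equiv n$. Take $G$ with exactly $k$ real simple zeros strictly interlacing them, which is possible since $k\le m$ and $k\equiv m$. By Proposition~\ref{prop:interlacing}(1), $|w(F_\delta,G)|=k$, and replacing $G$ by $-G$ flips the sign by Lemma~\ref{lem:w-properties}(1). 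So for each $\delta$ some $(F_\delta,\pm G)$ lies in $C$, and the sign of $G$ is irrelevant in the limit $(F_\delta,0)$.

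If $n\not\equiv m\pmod 2$, Theorem~\ref{thm:components}(1) says that the two components are distinguished by the sign of the resultant. Assume first that $n$ is even, so that $m$ is odd and $\mathrm{Res}(F,-G)=(-1)^n\mathrm{Res}(F,G)$ does not change sign. Here I would use the parity description instead: the parity of the number of zeros of $G$ lying in $\{F_\delta>0\}$. Choose $F_\delta$ with $k=2$ real simple zeros (possible as $n\ge2$), so that both sets $\{F_\delta>0\}$ and $\{F_\delta<0\}$ are nonempty open arcs of $\mathbb{RP}^1$. Give $G$ exactly one real zero, placed in whichever arc realizes the required parity. If instead $n$ is odd, then flipping $G\mapsto-G$ changes the sign of $\mathrm{Res}(F,G)$, so any admissible $G$ and its negative reach both components.

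The step I expect to be the main obstacle is the bookkeeping ensuring that the perturbation of the \emph{fixed} monomial has enough real zeros, with the right parity, and that the sign ambiguity is always absorbed by $G$ rather than by $F$. This matters most in the mixed-parity case with the roles of $n$ and $m$ exchanged, which I would check separately, as well as in the edge cases $i=0$ or $i=n$, where all the zeros of the monomial sit at a single point.
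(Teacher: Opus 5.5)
Your argument is correct in substance once one false claim (below) is dropped, and it takes a somewhat different route from the paper's.

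\textbf{Comparison with the paper.} Both proofs perturb $x^iy^{n-i}$ to a form with simple real zeros and pair it with a suitable second entry. You get smallness of the second entry from the invariance of each component under $(f,g)\mapsto(f,tg)$, $t>0$, whereas the paper builds $g_\delta$ small directly. You also treat the sign $\sigma$ directly, instead of via the involution $(f,g)\mapsto(-f,-g)$.

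For $n\equiv m\pmod 2$, the paper gives $f_\eta$ all $n$ zeros real and lets $g_\delta$ split them into $k$ groups of odd sizes. You give $F_\delta$ exactly $k$ real zeros and use strict interlacing. Both conclude by Proposition~\ref{prop:interlacing}(1).

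The real difference is the case $n\not\equiv m\pmod 2$. The paper imports from the literature the nonvanishing of $\nabla\mathrm{Res}$ at pairs with a single simple common zero, and leaves the hypersurface in both normal directions. You observe that $G\mapsto -G$ already switches components when $n$ is odd. When $n$ is even, you use the parity clause of Theorem~\ref{thm:components}(1), placing the single real zero of $G$ either in $\{F_\delta>0\}$ or in $\{F_\delta<0\}$. This is more elementary. It relies on that parity reformulation, which the paper states but never uses, rather than on the regularity of the smooth stratum of $\mathrm{Res}_0(n,m)$.

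\textbf{The false claim.} A real form close to $x^iy^{n-i}$ cannot have an arbitrary number $k\equiv n\pmod 2$, $0\le k\le n$, of real zeros. Each real zero of odd multiplicity of $x^iy^{n-i}$ forces, by a local sign change, a real zero of every nearby real form. Hence
\[k\ge (i\bmod 2)+\bigl((n-i)\bmod 2\bigr).\]
In your factor-pairing language, an $x$-factor cannot be paired with a $y$-factor, because every quadratic close to the indefinite form $xy$ has two real zeros.

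So for $n$ even and $i$ odd there is no $F_\delta$ without real zeros. Your instruction for $j=0$, ``take $F_\delta$ and $G$ with no real zeros'', then cannot be carried out; for example it fails for $n=m=2$ and $v=(xy,0)$.

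The repair is immediate. Proposition~\ref{prop:interlacing}(1) only needs $G$ to have no real zeros. So for $j=0$ you may take $F=\sigma x^iy^{n-i}$ itself and $G=(x^2+y^2)^{m/2}$: there is no common zero, $w=0$, and scaling $G$ to $0$ gives the point. Your other uses of the construction ($k=|j|\ge1$ with $k\equiv n$, and $k=2$ for $n$ even) respect the lower bound, so they stand.

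Note also that $G$ must in general depend on $\delta$, since its real zeros have to fit between the clustered zeros of $F_\delta$. This is harmless, because you scale $G$ away afterwards.
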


\begin{proof}
In both cases below it suffices to treat the point
$v=\bigl(x^iy^{\,n-i},0\bigr)$: the point $-v$ is treated by changing the
signs of both entries of the approximating pairs, an involution of
$\mathfrak P(n,m)\setminus\mathrm{Res}_0(n,m)$ which permutes its connected
components (it preserves each of them when $n\equiv m\pmod2$, by
Lemma~\ref{lem:w-properties}(1), and it exchanges the two components when
$n\not\equiv m\pmod2$, since then
$\mathrm{Res}(-f,-g)=(-1)^{n+m}\mathrm{Res}(f,g)=-\mathrm{Res}(f,g)$); in either
case a point lying in the closure of every component is mapped to a point with
the same property; and the points
$\pm\bigl(0,x^{\,l}y^{\,m-l}\bigr)$ are treated by exchanging the two
factors of $\mathfrak{P}(n,m)$, which changes $w$ only by a sign, the
exchange of the coordinates of $\mathbb{R}^2$ being a reflection.

\smallskip
\noindent\textbf{Case 1: $n\equiv m\pmod 2$.}
By Theorem~\ref{thm:components}(2) the connected components are labelled
by the values $j$ of the winding number with $|j|\le\min(n,m)$ and
$j\equiv n\pmod 2$.  Fix such a $j$ and put $k=|j|$.  All $n$ zeros of
$x^iy^{\,n-i}$ lie on $\mathbb{RP}^1$, so an arbitrarily small
perturbation $f_\eta$ of it has $n$ distinct simple real zeros.  If
$k=0$, then $m$ is even, and we let $g_\delta$ be an arbitrarily small
form of degree $m$ without real zeros; then $w(f_\eta,g_\delta)=0$ by
Proposition~\ref{prop:interlacing}(1).  If $k\ge1$, then $k\le n$ and
$k\equiv n\pmod2$, so we may write $n=c_1+\dots+c_k$ with all $c_a$ odd
and positive; we let $g_\delta$ be an arbitrarily small form of degree
$m$ with exactly $k$ simple real zeros (possible since $k\le m$ and
$k\equiv m\pmod2$), placed on $\mathbb{RP}^1$ so that the $a$-th gap
between cyclically consecutive zeros of $g_\delta$ contains exactly
$c_a$ zeros of $f_\eta$, and with its non-real zeros distinct from those
of $f_\eta$.  Then
$(f_\eta,g_\delta)\in\mathfrak{P}(n,m)\setminus\mathrm{Res}_0(n,m)$ and
$|w(f_\eta,g_\delta)|=k$ by Proposition~\ref{prop:interlacing}(1).
Replacing $g_\delta$ by $-g_\delta$ reverses the sign of $w$, so one of
$(f_\eta,\pm g_\delta)$ has winding number $j$ and therefore lies, by
Theorem~\ref{thm:components}(2), in the prescribed component.  Since
$\eta$ and $\delta$ may be taken arbitrarily small, $v$ lies in the
closure of that component.

\smallskip
\noindent\textbf{Case 2: $n\not\equiv m\pmod 2$.}
By Theorem~\ref{thm:components}(1) there are exactly two connected
components, distinguished by the sign of $\mathrm{Res}(f,g)$.  The
argument of Case~1 does not apply here: since
$\mathrm{Res}(-f,g)=(-1)^{m}\mathrm{Res}(f,g)$ and
$\mathrm{Res}(f,-g)=(-1)^{n}\mathrm{Res}(f,g)$, exactly one of the two
sign changes preserves the component, so changing signs does not suffice
to reach both of them.  Instead we use the classical fact that the
differential of the resultant does not vanish at a pair of binary forms
having exactly one common zero in $\mathbb{CP}^1$, that zero being a
simple zero of each of the two forms; note that over $\mathbb{R}$ such a
common zero is automatically real, since the non-real zeros of a real
form come in conjugate pairs.  In other words, the set of such pairs is
a regular stratum of the hypersurface $\mathrm{Res}_0(n,m)$; see for
example \cite[formulas (1.11) and (1.11$'$), pp.~399--400]{gkz94}, where
the gradient of the resultant at such a pair is expressed through the
common zero. (Which individual partial derivatives are nonzero depends on
the position of the common zero; only the nonvanishing of the gradient is
used below.)

Now let $f_\eta$ be an arbitrarily small perturbation of
$x^iy^{\,n-i}$ having $n$ distinct simple zeros, let $p\in\mathbb{RP}^1$
be one of its real zeros, and let $\ell_p$ be a linear form vanishing at
$p$.  Choose $g_\delta:=\delta\,\ell_p\,h$, where $h$ is a form of degree
$m-1$ having no zero in common with $f_\eta$ and not divisible by
$\ell_p$, and $\delta>0$ is arbitrarily small.  Then $p$ is the only
common zero of $f_\eta$ and $g_\delta$ and it is a simple zero of each,
so $(f_\eta,g_\delta)$ lies in the regular stratum above, and it is
arbitrarily close to $v$.  Finally, writing
$u_t:=(f_\eta,g_\delta)+t\,\nabla\mathrm{Res}(f_\eta,g_\delta)$, we get
$\mathrm{Res}(u_t)=t\,\lVert\nabla\mathrm{Res}(f_\eta,g_\delta)\rVert^2+O(t^2)$,
which is positive for small $t>0$ and negative for small $t<0$.  Hence
points of both connected components lie arbitrarily close to $v$.
\end{proof}

\begin{proof}[Proof of Theorem~\ref{thm:convex-hull}]
Immediate from Lemma~\ref{lem:carath} applied to the basis
\eqref{eq:monomial-basis}, whose hypothesis holds by
Proposition~\ref{prop:basis-in-closure}.
\end{proof}

Combining Theorem~\ref{thm:convex-hull} with
Corollary~\ref{cor:reduction-to-resultant}, we conclude that
$\mathcal{R}\cap P^{\ell}_{\tau}$ is ample in $P^{\ell}_{\tau}$ whenever
the reduced pencil $L^{\ell}_{\tau}$ consists of two singular
indecomposable blocks, which is the case of item (2) of
Proposition~\ref{2cases_prop}.
Together with Subsection~\ref{ampleness-regular} this exhausts the cases
listed in Proposition~\ref{2cases_prop}, and therefore
completes the proof of the ampleness of $\mathfrak{R}$ and hence, by
Theorem~\ref{Gromov_ample_thm}, of Theorem~\ref{main_theorem}.

\appendix

\section{Proof of Proposition \ref{prop:formal_solution}}
\label{top_sec}

Throughout this appendix, for a real vector bundle $E$ of rank $r$ we write $\det E:=\Lambda^{r}E$, and we use the canonical isomorphism
\begin{equation}
	\label{eq:contraction-iso}
	E\otimes(\det E)^{*}\xrightarrow{\ \cong\ }\Lambda^{r-1}E^{*},\qquad v\otimes\Omega\longmapsto\iota_{v}\Omega .
\end{equation}
All vector bundles are real; complexifications do not occur in this appendix.

\subsection*{Necessity ($\implies$)}
Let $(D,L)$ be a formal solution of $\mathfrak R$, i.e. an almost Kronecker structure. Put $Q:=TM/D$, so that $D^{\perp}\cong Q^{*}$ canonically, and put
\[
	A:=\det D=\Lambda^{2N+1}D .
\]

\smallskip\noindent
\emph{The span of the kernels.}
Fix $q\in M$. By item (2) of Proposition \ref{open_orbit_char_prop}, every nonzero form $L(\alpha)$, $\alpha\in D^{\perp}(q)$, has rank $2N$ on $D(q)$, so that $L(\alpha)^{N}\neq0$, and by \eqref{volume_ker} together with \eqref{eq:contraction-iso} the element of $D(q)\otimes\det D(q)^{*}$ corresponding to $L(\alpha)^{N}\in\Lambda^{2N}D(q)^{*}$ spans the line $\ker L(\alpha)\otimes\det D(q)^{*}$. Let $\mathcal V_1\subset D$ be the subbundle spanned at every point by the kernels of the nonzero forms of the pencil; by \eqref{kernel_coordinate} it has rank $N+1$, and by \eqref{singular block} it is isotropic for every form of the pencil.

Since $\alpha\mapsto L(\alpha)^{N}$ is a homogeneous polynomial map of degree $N$ on $D^{\perp}\cong Q^{*}$, it is induced by a bundle map
\begin{equation}
	\label{eq:kappa}
	\kappa\colon\mathrm{Sym}^{N}(Q^{*})\longrightarrow\Lambda^{2N}D^{*}\cong D\otimes A^{*},\qquad
	\alpha^{N}\longmapsto L(\alpha)^{N}.
\end{equation}
For a basis $\alpha_1,\alpha_2$ of $D^{\perp}(q)$, the elements $\alpha_1^{i}\alpha_2^{N-i}$, $0\le i\le N$, form a basis of $\mathrm{Sym}^{N}(Q_q^{*})$, and $\kappa$ maps them to $L(\alpha_1)^{i}\wedge L(\alpha_2)^{N-i}$, which are linearly independent by item (4) of Proposition \ref{open_orbit_char_prop}. Hence $\kappa$ is injective. Its image is contained in $\mathcal V_1\otimes A^{*}$ and, being spanned by the elements corresponding to the kernel lines, coincides with it. Comparing ranks we obtain a canonical isomorphism
\begin{equation}
	\label{eq:V1}
	\mathcal V_1\cong A\otimes\mathrm{Sym}^{N}(Q^{*}) .
\end{equation}
In particular $\mathcal V_1$ is a smooth subbundle of $D$.

\smallskip\noindent
\emph{The quotient.}
Put $\mathcal V_2:=D/\mathcal V_1$, a vector bundle of rank $N$. Since $\mathcal V_1$ is isotropic for all forms of the pencil, evaluation of these forms defines a bundle map
\begin{equation}
	\label{eq:T}
	T\colon\mathcal V_1\otimes Q^{*}\longrightarrow\mathcal V_2^{*},\qquad
	T(v\otimes\alpha)\bigl([u]\bigr):=L(\alpha)(v,u),\quad v\in\mathcal V_1,\ u\in D,
\end{equation}
which is well defined because changing $u$ by an element of $\mathcal V_1$ does not change the right-hand side.

The map $T$ is surjective. Indeed, let $u\in D(q)$ represent a class in $\mathcal V_2(q)$ annihilated by the image of $T$, so that $L(\alpha)(v,u)=0$ for all $v\in\mathcal V_1(q)$ and all $\alpha\in D^{\perp}(q)$, and fix one nonzero $\alpha$, writing $\omega:=L(\alpha)$. The subspace $\mathcal V_1(q)$ is isotropic for $\omega$ and contains $\ker\omega$, so $\mathcal V_1(q)/\ker\omega$ is an $N$-dimensional isotropic subspace of the $2N$-dimensional symplectic space $D(q)/\ker\omega$, i.e. a Lagrangian one. Therefore $\mathcal V_1(q)^{\perp_{\omega}}=\mathcal V_1(q)$, so $u\in\mathcal V_1(q)$ and $[u]=0$. Thus the annihilator of the image of $T$ is trivial, and $T$ is onto.

Next, $\ker T$ contains $A\otimes\mathrm{Sym}^{N+1}(Q^{*})$, where $\mathrm{Sym}^{N+1}(Q^{*})$ is embedded in $\mathrm{Sym}^{N}(Q^{*})\otimes Q^{*}$ by polarization, $\alpha^{N+1}\mapsto\alpha^{N}\otimes\alpha$. Indeed, under \eqref{eq:V1} an element $a\otimes\alpha^{N}$ of $\mathcal V_1$ corresponds, up to a nonzero factor, to a vector spanning $\ker L(\alpha)$, so $T(a\otimes\alpha^{N}\otimes\alpha)=0$, and the elements $\alpha^{N+1}$ span $\mathrm{Sym}^{N+1}(Q^{*})$. On the other hand, since $T$ is onto,
\[
	\operatorname{rank}\ker T=\operatorname{rank}\bigl(\mathcal V_1\otimes Q^{*}\bigr)-\operatorname{rank}\mathcal V_2^{*}=2(N+1)-N=N+2=\operatorname{rank}\mathrm{Sym}^{N+1}(Q^{*}),
\]
so $\ker T=A\otimes\mathrm{Sym}^{N+1}(Q^{*})$. For the rank two bundle $Q^{*}$ there is a canonical exact sequence
\begin{equation}
	\label{eq:sym-exact}
	0\longrightarrow\mathrm{Sym}^{N+1}(Q^{*})\longrightarrow\mathrm{Sym}^{N}(Q^{*})\otimes Q^{*}\longrightarrow\mathrm{Sym}^{N-1}(Q^{*})\otimes\Lambda^{2}Q^{*}\longrightarrow0,
\end{equation}
in which the first map is polarization and the second is
$(\xi_1\cdots\xi_N)\otimes\eta\mapsto\sum_{i=1}^{N}(\xi_1\cdots\widehat{\xi_i}\cdots\xi_N)\otimes(\xi_i\wedge\eta)$; this is the Clebsch--Gordan decomposition of $\mathrm{Sym}^{N}\otimes\mathrm{Sym}^{1}$ for $\mathrm{GL}_2$ \cite[Lecture 11]{FultonHarris}. (Exactness can also be checked directly: the second map is onto, since in a local frame $x,y$ of $Q^{*}$ it sends $x^{N-j}y^{j}\otimes y$ to $(N-j)\,x^{N-j-1}y^{j}\otimes(x\wedge y)$; its kernel has rank $2(N+1)-N=N+2$ and contains the image of the first map, which is injective and has rank $N+2$.) Tensoring \eqref{eq:sym-exact} with $A$ and comparing with $T$ we get
$\mathcal V_2^{*}\cong A\otimes\mathrm{Sym}^{N-1}(Q^{*})\otimes\Lambda^{2}Q^{*}$, i.e.
\begin{equation}
	\label{eq:V2}
	\mathcal V_2\cong A^{*}\otimes\mathrm{Sym}^{N-1}(Q)\otimes\Lambda^{2}Q .
\end{equation}
Choosing splittings of the exact sequences $0\to\mathcal V_1\to D\to\mathcal V_2\to0$ and $0\to D\to TM\to Q\to0$ and combining \eqref{eq:V1} and \eqref{eq:V2}, we obtain \eqref{TM_decomp} with $Q=TM/D$ and $A=\det D$.

\begin{remark}
	\label{rem:twist}
	The line bundle $A$ in \eqref{TM_decomp} cannot be discarded, and the reason is visible already at a single point. In the normal form \eqref{singular block}, all forms of the pencil vanish on $\mathcal V_1\times\mathcal V_1$, where $\mathcal V_1=\operatorname{span}\{e_1,\dots,e_{N+1}\}$, and on $\mathcal V_2'\times\mathcal V_2'$, where $\mathcal V_2':=\operatorname{span}\{e_{N+2},\dots,e_{2N+1}\}$. Hence for every $t\in\mathbb R^{*}$ the transformation $g_t:=t\,\mathrm{Id}_{\mathcal V_1}\oplus t^{-1}\mathrm{Id}_{\mathcal V_2'}$ preserves every form of the pencil, while it acts on $\mathcal V_1$ by the scalar $t$. Thus the stabilizer of the pencil in $\mathrm{GL}(D(q))$ acts on $\mathcal V_1(q)$ through a character which is invisible on the pencil itself, and therefore cannot be detected by the $\mathrm{GL}(Q_q^{*})$-representation theory alone; this character is $\det$, since $\det g_t=t^{N+1}t^{-N}=t$, in agreement with \eqref{eq:V1}. Note also that $A$ is trivial if and only if $D$ is orientable, so \eqref{TM_decomp} with $A$ trivial is precisely the criterion for the existence of an almost Kronecker structure with orientable $D$. In Appendix \ref{nonpar_sec} we give a closed manifold admitting an almost Kronecker structure but no decomposition \eqref{TM_decomp} with $A$ trivial.
\end{remark}

\subsection*{Sufficiency ($\impliedby$)}
Conversely, suppose that \eqref{TM_decomp} holds for some rank two bundle $Q$ and line bundle $A$, and let $D\subset TM$ be the image of the first two summands, which we denote by
\[
	\mathcal V_1:=A\otimes\mathrm{Sym}^{N}(Q^{*}),\qquad
	\mathcal V_2:=A^{*}\otimes\mathrm{Sym}^{N-1}(Q)\otimes\Lambda^{2}Q .
\]
Then $D$ has rank $(N+1)+N=2N+1$, the quotient $TM/D$ is isomorphic to $Q$, and consequently $D^{\perp}\cong Q^{*}$. (The choice of $A$ is consistent with the necessity part: using $\det\mathrm{Sym}^{k}(Q)\cong(\det Q)^{\otimes k(k+1)/2}$ one finds $\det\mathcal V_1\cong A^{\otimes(N+1)}\otimes(\det Q^{*})^{\otimes N(N+1)/2}$ and $\det\mathcal V_2\cong A^{\otimes(-N)}\otimes(\det Q)^{\otimes N(N+1)/2}$, so that $\det D\cong A$.)

We define $L\colon Q^{*}\to\Lambda^{2}D^{*}$ by requiring $\mathcal V_1$ and $\mathcal V_2$ to be isotropic for every $L(\alpha)$, so that $L(\alpha)$ is determined by its cross-terms on $\mathcal V_1\times\mathcal V_2$, and by setting
\begin{equation}
	\label{eq:L-def}
	L(\alpha)\bigl(a\otimes p,\ \varphi\otimes u\otimes\Delta\bigr):=\varphi(a)\,\Delta\bigl(\iota_{u}p,\ \alpha\bigr),
\end{equation}
where $a\in A$, $\varphi\in A^{*}$, $p\in\mathrm{Sym}^{N}(Q^{*})$, $u\in\mathrm{Sym}^{N-1}(Q)$, $\Delta\in\Lambda^{2}Q$.
Here $\iota_{u}p\in Q^{*}$ is the contraction of the $N-1$ contravariant indices of $u$ with $N$ of the covariant indices of $p$; in a local frame it is the action of $u$, viewed as a constant-coefficient differential operator of order $N-1$, on the homogeneous polynomial $p$ of degree $N$. The bivector $\Delta$ is viewed as a skew-symmetric bilinear form on $Q^{*}$. All tensor factors in \eqref{eq:L-def} are contracted through their natural pairings, so $L$ is a well-defined bundle map.

To see that $L(D^{\perp}(q))$ is in the generic orbit at every point, choose a local generator $a$ of $A$ with dual generator $a^{*}$ of $A^{*}$, a local frame $x,y$ of $Q^{*}$ with dual frame $X,Y$ of $Q$, and put $\Delta:=X\wedge Y$. Define local frames of $\mathcal V_1$ and $\mathcal V_2$ by
\begin{equation}
	\label{eq:weight-frame}
	e_k:=a\otimes\frac{x^{N-k+1}y^{k-1}}{(N-k+1)!\,(k-1)!},\quad 1\le k\le N+1,
	\qquad
	e_{2N+2-k}:=a^{*}\otimes X^{N-k}Y^{k-1}\otimes\Delta,\quad 1\le k\le N,
\end{equation}
and let $\varepsilon_1,\dots,\varepsilon_{2N+1}$ be the dual coframe of $D$. The differential operator $X^{N-k}Y^{k-1}=\partial_x^{N-k}\partial_y^{k-1}$ sends the $k$-th divided power monomial in \eqref{eq:weight-frame} to $x$, the $(k+1)$-st to $y$, and all the other ones to zero. Hence, writing $\mu:=\Delta(x,\alpha)$ and $\lambda:=\Delta(y,\alpha)$ for $\alpha\in Q^{*}$, we get from \eqref{eq:L-def}
\[
	L(\alpha)(e_k,e_{2N+2-k})=\mu,\qquad L(\alpha)(e_{k+1},e_{2N+2-k})=\lambda,\qquad 1\le k\le N,
\]
and all other values of $L(\alpha)$ on pairs of frame vectors vanish. Thus
\[
	L(\alpha)=\sum_{k=1}^{N}(\mu\varepsilon_k+\lambda\varepsilon_{k+1})\wedge\varepsilon_{2N+2-k},
\]
and as $\alpha$ varies over $Q^{*}_q$, the pair $(\mu,\lambda)$ varies over $\mathbb R^{2}$; so $L(D^{\perp}(q))$ is the pencil \eqref{singular block}. Hence $L$ is fiberwise injective and its image belongs to the generic orbit at every $q\in M$, i.e. $(D,L)$ is an almost Kronecker structure. This completes the proof of Proposition \ref{prop:formal_solution}.

\section{Example of Non-Parallelizable Manifolds with desired corank $2$ distributions}
\label{nonpar_sec}
The bundle decomposition \eqref{TM_decomp}
does not imply that $M$ is parallelizable, even when $M$ is closed
and orientable. We give an explicit nine-dimensional example,
followed by a five-dimensional nonorientable example; in both of them the line bundle $A$ of \eqref{TM_decomp} is trivial. We then give a seven-dimensional example in which \eqref{TM_decomp} holds only with a nontrivial $A$, so that the line bundle in Proposition \ref{prop:formal_solution} cannot be dispensed with.
All vector bundles below are real, and $\varepsilon^r$ denotes
the trivial bundle of rank $r$ over the relevant base.

We use the standard identity
\begin{equation}
	\label{eq:nonparallelizable-projective-tangent}
	T\mathbb{RP}^{m}\oplus\varepsilon^1
	\cong \gamma^{\oplus(m+1)},
\end{equation}
where $\gamma$ is the tautological line bundle over
$\mathbb{RP}^{m}$. Indeed, writing $\gamma^\perp$ for the
orthogonal complement of $\gamma$ in $\varepsilon^{m+1}$, we have
\[
	T\mathbb{RP}^{m}\cong\operatorname{Hom}(\gamma,\gamma^\perp).
\]
Since $\operatorname{Hom}(\gamma,\gamma)\cong\varepsilon^1$
and $\gamma^*\cong\gamma$, it follows that
\[
	T\mathbb{RP}^{m}\oplus\varepsilon^1
	\cong
	\operatorname{Hom}(\gamma,\gamma^\perp\oplus\gamma)
	\cong
	\operatorname{Hom}(\gamma,\varepsilon^{m+1})
	\cong
	\gamma^{\oplus(m+1)}.
\]

\medskip
\noindent
\textbf{A closed orientable example.}
Take
\[
	N=3,
	\qquad
	M=\mathbb{RP}^{5}\times T^4,
	\qquad
	Q=\lambda\oplus\lambda,
	\qquad
	A=\varepsilon^1,
\]
where $T^4=(S^1)^4$ and
$\lambda=\pi_1^*\gamma$ is the pullback of the tautological line
bundle under the projection
$\pi_1\colon M\to\mathbb{RP}^{5}$.
The dimension of $M$ is $9=2N+3$.
Since the tangent bundle of the torus is trivial,
\eqref{eq:nonparallelizable-projective-tangent} gives
\begin{equation}
	\label{eq:nonparallelizable-nine-dimensional-tangent}
	\begin{aligned}
		TM
		 & \cong \pi_1^*T\mathbb{RP}^{5}\oplus\varepsilon^4 \\
		 & \cong
		\pi_1^*\bigl(T\mathbb{RP}^{5}\oplus\varepsilon^1\bigr)
		\oplus\varepsilon^3                                 \\
		 & \cong \lambda^{\oplus6}\oplus\varepsilon^3.
	\end{aligned}
\end{equation}
In particular, this is an actual bundle isomorphism, rather than
merely a stable one.

For any real line bundle $\lambda$, a choice of bundle metric gives
$\lambda^*\cong\lambda$ and
$\lambda^{\otimes2}\cong\varepsilon^1$.
Moreover, since $Q\cong\lambda\otimes\varepsilon^2$,
\[
	\operatorname{Sym}^{j}(Q)
	\cong
	\lambda^{\otimes j}\otimes
	\operatorname{Sym}^{j}(\varepsilon^2).
\]
The bundle $\operatorname{Sym}^{j}(\varepsilon^2)$ is trivial
of rank $j+1$. Consequently,
\[
	\operatorname{Sym}^{3}(Q^*)\cong\lambda^{\oplus4},
	\qquad
	\operatorname{Sym}^{2}(Q)\cong\varepsilon^3,
	\qquad
	\Lambda^2Q\cong\varepsilon^1.
\]
Combining these identities with
\eqref{eq:nonparallelizable-nine-dimensional-tangent}, we obtain
\[
	\begin{aligned}
		 & \operatorname{Sym}^{3}(Q^*)
		\oplus
		\bigl(\operatorname{Sym}^{2}(Q)\otimes\Lambda^2Q\bigr)
		\oplus Q                       \\
		 & \qquad\cong
		\lambda^{\oplus4}\oplus\varepsilon^3\oplus\lambda^{\oplus2}
		\cong
		\lambda^{\oplus6}\oplus\varepsilon^3
		\cong TM.
	\end{aligned}
\]
Thus \eqref{TM_decomp} holds
with $N=3$ and $A$ trivial.

To see that $M$ is not parallelizable, let
$a=w_1(\lambda)\in H^1(M;\mathbb{Z}/2)$.
The class $a^2$ is nonzero, since its restriction to
$\mathbb{RP}^{5}\times\{\mathrm{pt}\}$ is the nonzero
degree-two class in
\[
	H^*(\mathbb{RP}^{5};\mathbb{Z}/2)
	\cong(\mathbb{Z}/2)[x]/(x^6),
	\qquad |x|=1.
\]
The Whitney product formula and
\eqref{eq:nonparallelizable-nine-dimensional-tangent} imply
\[
	w(TM)=(1+a)^6.
\]
Hence
\[
	w_1(TM)=6a=0,
	\qquad
	w_2(TM)=\binom{6}{2}a^2=a^2\ne0.
\]
Therefore $M$ is orientable but not parallelizable.

\medskip
\noindent
\textbf{A five-dimensional closed nonorientable example.}
Take
\[
	N=1,
	\qquad
	M=\mathbb{RP}^{2}\times T^3,
	\qquad
	Q=\varepsilon^1\oplus\lambda,
	\qquad
	A=\varepsilon^1,
\]
where $\lambda$ is now the pullback of the tautological line bundle
over $\mathbb{RP}^{2}$.
Again, \eqref{eq:nonparallelizable-projective-tangent} gives
\[
	TM\cong\lambda^{\oplus3}\oplus\varepsilon^2.
\]
On the other hand,
\[
	Q^*\cong\varepsilon^1\oplus\lambda,
	\qquad
	\Lambda^2Q\cong\lambda.
\]
Since $\operatorname{Sym}^{0}(Q)\cong\varepsilon^1$, we obtain
\[
	\begin{aligned}
		\operatorname{Sym}^{1}(Q^*)
		\oplus
		\bigl(\operatorname{Sym}^{0}(Q)\otimes\Lambda^2Q\bigr)
		\oplus Q
		 & \cong Q^*\oplus\Lambda^2Q\oplus Q          \\
		 & \cong \varepsilon^2\oplus\lambda^{\oplus3} \\
		 & \cong TM.
	\end{aligned}
\]
Thus \eqref{TM_decomp} also holds
in this case, again with $A$ trivial.
Writing $a=w_1(\lambda)$, we have
\[
	w_1(TM)=3a=a\ne0.
\]
This manifold is therefore nonorientable and, in particular,
nonparallelizable.

\medskip
\noindent
\textbf{An example where the line bundle $A$ is essential.}
Take
\[
	N=2,\qquad M=\mathbb{RP}^{4}\times T^{3},\qquad Q=\varepsilon^{2},\qquad A=\lambda,
\]
where $\lambda$ is the pullback of the tautological line bundle over $\mathbb{RP}^4$. The dimension of $M$ is $7=2N+3$, and \eqref{eq:nonparallelizable-projective-tangent} gives
\[
	TM\cong\lambda^{\oplus5}\oplus\varepsilon^{2}.
\]
Since $Q$ is trivial, $A\otimes\operatorname{Sym}^2(Q^*)\cong\lambda^{\oplus3}$ and $A^*\otimes\operatorname{Sym}^1(Q)\otimes\Lambda^2Q\cong\lambda^{\oplus2}$ (recall that $\lambda^*\cong\lambda$), so the right-hand side of \eqref{TM_decomp} is $\lambda^{\oplus3}\oplus\lambda^{\oplus2}\oplus\varepsilon^2\cong TM$. Hence $M$ admits an almost Kronecker structure and, by Corollary \ref{existence}, a distribution satisfying $\mathfrak R$; the distribution $D\cong\lambda^{\oplus5}$ so obtained is nonorientable.

We claim that no rank two bundle $Q'$ over $M$ satisfies \eqref{TM_decomp} with $A$ trivial, so that, by Remark \ref{rem:twist}, $M$ carries no almost Kronecker structure with orientable $D$. Let $a':=w_1(Q')$, $b':=w_2(Q')$ and
\[
	F:=\operatorname{Sym}^{2}(Q'^{*})\oplus\bigl(Q'\otimes\Lambda^{2}Q'\bigr)\oplus Q'.
\]
By the splitting principle, if $Q'$ has formal roots $\xi_1,\xi_2$ (so that $a'=\xi_1+\xi_2$ and $b'=\xi_1\xi_2$), then $\operatorname{Sym}^2(Q'^*)$ has formal roots $2\xi_1,\ \xi_1+\xi_2,\ 2\xi_2$ and $Q'\otimes\Lambda^2Q'$ has formal roots $\xi_2,\xi_1$; since we work with $\mathbb Z_2$-coefficients, $2\xi_i=0$. Hence $w(\operatorname{Sym}^2(Q'^*))=1+a'$, $w(Q'\otimes\Lambda^2Q')=1+a'+b'$, and
\[
	w(F)=(1+a')(1+a'+b')^2=(1+a')(1+a'^2+b'^2),
\]
so that $w_1(F)=a'$ and $w_2(F)=a'^2=w_1(F)^2$. On the other hand, $w(TM)=(1+a)^5$ with $a:=w_1(\lambda)$, whence $w_1(TM)=a$ and $w_2(TM)=\binom52a^2=0$, while $a^2\neq0$ (its restriction to $\mathbb{RP}^4\times\{\mathrm{pt}\}$ is the nonzero class $a^2\in H^2(\mathbb{RP}^4;\mathbb Z_2)$). Thus $w_2(TM)\neq w_1(TM)^2$, and $F\cong TM$ is impossible for every $Q'$. This proves the claim.

\medskip
In each of the three examples, choose a bundle isomorphism
\[
	\Phi\colon
	\bigl(A\otimes\operatorname{Sym}^{N}(Q^*)\bigr)
	\oplus
	\bigl(A^*\otimes\operatorname{Sym}^{N-1}(Q)\otimes\Lambda^2Q\bigr)
	\oplus Q
	\longrightarrow TM
\]
and let $D\subset TM$ be the image of the first two summands.
Then $D$ is a smooth corank two subbundle and
\[
	TM/D\cong Q.
\]
Equivalently, its annihilator $D^\perp\subset T^*M$ satisfies
$D^\perp\cong Q^*$.
Thus all three examples realize the quotient-bundle identification
as well as the required tangent-bundle decomposition.

\section{ The case of corank 2 distributions on even-dimensional manifolds} 
\label{even_dim_sec}
For completeness we briefly indicate what the method gives for corank $2$ distributions of \emph{even} rank $2N$ on a $(2N+2)$-dimensional manifold, $N\ge2$. There the Levi pencil lives on an even-dimensional space and its Pfaffian
\[
	P(\mu,\lambda):=\operatorname{Pf}(\mu\omega_1+\lambda\omega_2)
\]
is a real binary form of degree $N$ whose roots in $\mathbb{CP}^1$ are the elementary divisors of the pencil. A generic pencil is regular with $N$ distinct elementary divisors, but the number $k\equiv N \pmod 2$ of real ones is an invariant (and for $N\ge4$ the elementary divisors themselves carry continuous moduli), so there is no single generic orbit; the natural open $\mathrm{Diff}$-invariant relations are $\mathfrak R^{\mathrm{reg}}=\{P\not\equiv0\}$ and, for each admissible $k$, the relation $\mathfrak R_k=\{P\text{ has exactly }k\text{ real roots, all simple}\}$; for $N=2$, $\mathfrak R_2$ and $\mathfrak R_0$ are the hyperbolic and elliptic $(4,6)$ distributions of \cite{MAdP2021}. The reduction of Subsection \ref{preliminaries} applies verbatim: on a principal subspace $P^\ell_\tau$ with $D\not\subset\tau$, writing $\omega_i=\overline\omega_i+\beta_i\wedge\varepsilon$ as in \eqref{translation_to_old_2} and $\overline\omega_{\mu,\lambda}^{\,N-1}=\iota_{X(\mu,\lambda)}\mathrm{vol}_{\tau\cap D}$ with $X$ a vector-valued binary form of degree $N-1$, one finds, since $\overline\omega_{\mu,\lambda}^{\,N}=0$ for dimensional reasons,
\[
	P(\mu,\lambda)=\bigl\langle\mu\beta_1+\lambda\beta_2,\,X(\mu,\lambda)\bigr\rangle ,
\]
so that the free parameters $(\beta_1,\beta_2)$ enter \emph{one} binary form linearly, instead of the pair $(P_1,P_2)$ of \eqref{poly_beta}. Two conclusions follow at once. First, $\mathfrak R^{\mathrm{reg}}$ is ample: its trace on $P^\ell_\tau$ is either empty or the complement of a linear subspace of codimension at least two, so $\mathfrak R^{\mathrm{reg}}$ satisfies the multi-parametric $C^0$-close $h$-principle. Second, none of the relations $\mathfrak R_k$ is ample. For $k=0$ ($N$ even) the condition ``$P$ definite'' is convex. For $k\ge1$ choose $\tau$ so that $\tau\cap D$ contains the two-dimensional block of a real elementary divisor $[-a:1]$; then $P(\mu,\lambda)=(\mu+a\lambda)\,g(\mu,\lambda)$ with $g$ linear in $(\beta_1,\beta_2)$, the real root $[-a:1]$ is frozen, and its simplicity $g(-a,1)\neq0$ is a nontrivial \emph{linear} condition, so the trace of $\mathfrak R_k$ lies in the complement of a hyperplane and its components have convex hulls in half-spaces. The same principal subspaces show that any open condition bounding the number of real roots of $P$ from below by $k\ge2$ fails to be ample. This is consistent with \cite{MAdP2021}, where the hyperbolic $(4,6)$ relation is shown to be non-ample and the $h$-principle is nevertheless obtained by convex integration with avoidance; the computation above identifies, for all $N$ and $k\ge1$, the principal subspaces to be avoided as those whose hyperplane $\tau\cap D$ contains a whole real regular block. Finally, the odd-rank relation $\mathfrak R$ of the present paper has no even-rank analogue in the naive sense: it is the generic orbit of \emph{singular} pencils, and on an even-dimensional space singularity, $P\equiv0$, is a closed condition.

\subsection*{Acknowledgements}

We are deeply grateful to Álvaro del Pino and Javier Martínez-Aguinaga for many discussions at the initial stage of this project; their expertise in the $h$-principle was instrumental at the
initial stage of the project: they introduced us to the basic techniques of
convex integration used here and shaped our approach to the problem. 

In the preparation of this paper the authors used the large language models
Anthropic (model Claude Fable~5.1) and OpenAI  (model ChatGPT 6~Astra)
as editorial assistants. In particular, these tools helped to improve the
exposition, pointed out an oversight in an earlier formulation of item (3) of
Proposition~\ref{ample_dim_1}, observed that the line bundle $A=\det D$ in
Proposition~\ref{TM_decomp} cannot be omitted, and suggested proving
Propositions~\ref{reg_ample_prop} and~\ref{ample_prop_sing} via the kernel
condition~\eqref{1dim_ker} of Proposition~\ref{open_orbit_char_prop} rather
than via the equivalent condition~\eqref{wedge_power} on exterior powers,
which considerably shortened our original proofs. All results and proofs in
the paper were formulated, verified and are the sole responsibility of the
authors.

\end{document}